\newtheorem{theorem}{Theorem}[section]
\newtheorem{lemma}[theorem]{Lemma}
\theoremstyle{definition}
\newtheorem{Prop}[theorem]{Proposition}
\newtheorem{Cor}[theorem]{Corollary}
\numberwithin{equation}{section}
\newcommand\bbc{{\mathbb C}}
\newcommand\md{{\mathcal{D}}}
\newcommand\mo{{\mathcal{O}}}
\begin{document}

\title[Ringel-Hall algebras beyond their quantum groups]{Ringel-Hall algebras beyond their quantum groups I: Restriction functor and Green formula}

\author[Xiao, Xu, Zhao]{Jie Xiao, Fan Xu, Minghui Zhao\\
\\
\\
\emph{\textbf{To the memory of Professor J. A. Green.}}}

\address{Department of Mathematics\\ Tsinghua University,
Beijing {\rm 100084}, P. R. China} \email{jxiao@math.tsinghua.edu.cn}

\address{Department of Mathematics\\ Tsinghua University,
Beijing {\rm 100084}, P. R. China} \email{fanxu@mail.tsinghua.edu.cn}

\address{College of Science\\ Beijing Forestry University,
Beijing {\rm 100083}, P. R. China} \email{zhaomh@bjfu.edu.cn}

\thanks{Jie Xiao was supported by NSF of China (No. 11131001), Fan Xu was supported by NSF of China (No. 11471177) and Minghui Zhao was supported by NSF of China (No. 11701028, 11771445).}

\subjclass[2010]{16G20 (primary), 17B37 (secondary).}

\date{\today}

\keywords{Hall algebra, Green formula, induction functor, restriction functor, simple perverse sheaf.}

\begin{abstract}
In this paper, we generalize the categorifical construction of a quantum group and its canonical basis introduced by Lusztig to the generic form of the whole Ringel-Hall algebra. We clarify the explicit relation between the Green formula and the restriction functor. By a geometric way to prove the Green formula, we show that  the compatibility of multiplication and comultiplication  of a Ringel-Hall algebra can be categorified under Lusztig's framework.
\end{abstract}

\maketitle

\tableofcontents

\section{Introduction} 
\label{intro}
The Ringel-Hall algebra $\mathcal{H}(\mathcal{A})$ of a (small) abelian category $\mathcal{A}$ was introduced by Ringel in \cite{Ringel1990}, as a model to realize the quantum group. When $\mathcal{A}$ is the category $\mathrm{Rep}_{\mathbb{F}_q}Q$ of finite dimensional representations for a simply-laced Dynkin quiver $Q$ over a finite field $\mathbb{F}_q$, the Ringel-Hall algebra $\mathcal{H}(\mathcal{A})$ is isomorphic to the positive/negative part of the corresponding quantum group (\cite{Ringel1990}).  For any acyclic quiver $Q$ and $\mathcal{A}=\mathrm{Rep}_{\mathbb{F}_q}Q$, the composition subalgebra of $\mathcal{H}(\mathcal{A})$ generated by the elements corresponding to simple representations is isomorphic to the positive/negative part of the  quantum group of type $Q$. This gives the algebraic realization of the positive/negative part of a (Kac-Moody type) quantum group. This realization was improved by Green (\cite{Green}), through solving a natural question whether there is a comultiplication on $\mathcal{H}(\mathcal{A})$ compatible with the corresponding multiplication so that the above isomorphism is an isomorphism between bialgebras. Now it is well-known that Green's comultiplication depends on a remarkable homological formula in \cite{Green} (called the Green formula in the following).

In the earlier seminal papers \cite{Lusztig} and \cite{Lusztig2}, Lusztig gave the geometric realization of the positive/negative part of a quantum group and then constructed the canonical basis for it.
Let $Q=(Q_0,Q_1, s, t)$ be a quiver and
$$
\mathbb{E}_{\alpha}:=\bigoplus_{h\in Q_1}\mathrm{Hom}_{\mathbb{K}}(\mathbb{K}^{\alpha_{s(h)}}, \mathbb{K}^{\alpha_{t(h)}})
$$ be the variety with the natural action of the algebraic group
$$G_{\alpha}:=\prod_{i\in Q_0}GL(\alpha_i,\mathbb{K})$$
for a given dimension vector $\alpha=\sum_{i\in Q_0}\alpha_ii\in \mathbb{N}Q_0.$
For any $\mathbf{i}=(i_1,i_2,\cdots,i_s)$, $i_l\in Q_0$ and $\mathbf{a}=(a_1,a_2,\cdots,a_s)$, $a_l\in \mathbb{N}$ such that $\sum_{l=1}^{s}a_li_l=\alpha$,
Lusztig (\cite{Lusztig2}) defined the flag variety ${F}_{\mathbf{i}, \mathbf{a}}$ and the subvariety $$\tilde{F}_{\mathbf{i}, \mathbf{a}}\subseteq\mathbb{E}_{\alpha}\times {F}_{\mathbf{i}, \mathbf{a}}.$$
Fixing any type $(\mathbf{i}, \mathbf{a}),$ consider the canonical proper morphism $\pi_{\mathbf{i}, \mathbf{a}}:\tilde{F}_{\mathbf{i}, \mathbf{a}}\rightarrow \mathbb{E}_{\alpha}.$ By the decomposition theorem of Beilinson, Bernstein and Deligne (\cite{BBD}), the complex $\pi_{\mathbf{i}, \mathbf{a}!}\mathbf{1}$ is semisimple, where $\mathbf{1}$ is the constant perverse sheaf on $\tilde{F}_{\mathbf{i}, \mathbf{a}}.$
Let $\mathcal{Q}_{\alpha}$ be the category of complexes isomorphic to sums of shifts of simple perverse sheaves appearing in $\pi_{\mathbf{i}, \mathbf{a}!}\mathbf{1},$
$K_{\alpha}$  the Grothendieck group of $\mathcal{Q}_{\alpha}$ and $$K(\mathcal{Q})=\bigoplus_{\alpha\in \mathbb{N}Q_0}K_{\alpha}.$$
Lusztig (\cite{Lusztig2}) already endowed $K(\mathcal{Q})$ with the multiplication and comultiplication structures by introducing his induction and restriction functors. He proved that the comultiplication is compatible with the multiplication in $K(\mathcal{Q})$ and  $K(\mathcal{Q})$ is isomorphic to the positive/negative part of  the corresponding quantum group as bialgebras up to a twist.

By this isomorphism, the isomorphism classes of simple perverse sheaves in $\mathcal{Q}_{\alpha}$ provide a basis of the positive/negative part of
the corresponding quantum group, which is called the canonical basis. The
canonical basis of a quantum group is crucially important in Lie theory.
This basis has many remarkable properties such as integrality and positivity
of structure constants, compatibility with all highest weight integrable representations,
etc. Lusztig's approach essentially motivates the categorification of quantum
groups (for example, see \cite{KL},\cite{Rouquier1} and \cite{VV}) or quantum cluster algebras (see \cite{HL},\cite{Nakajima2009},\cite{KQ}, etc.), i.e., a quantum group/quantum
cluster algebra can be viewed as the Grothendieck ring of a monoidal category and
some simple objects provide a basis (see also \cite{Webster}).

For a long time, we have been asked what the explicit relation exists between Green's comultiplication and Lusztig's restriction functor. As one of the main results in the present paper, the following Theorem \ref{thm_main} and the definition of the comultiplication operator $\Delta$ provide us this strong and clear link. Thanks to an embedding property as in \cite{KW}, we can lift the Green formula from finite fields to the level of sheaves. It is finally suitable to apply Lusztig's restriction functor to the larger categories of  $\iota$-mixed Weil complexes of integer weights, whose Grothendieck groups realize the weight spaces of a generic Ringel-Hall algebra. By using the direct sum of these Grothendieck groups, we also give the categorification of Ringel-Hall algebras via Lusztig's geometric method.

The paper is organized as follows. In Section 2, we recall the theory of Ringel-Hall algebras, focusing on the Hopf structure of a Ringel-Hall algebra.
In Section 3, we recall Lusztig's construction of a Hall algebra via functions invariant under the Frobenius map.
In \cite{Lusztig1998}, Lusztig defined the algebra $\mathcal{CF}^{F}(Q)$ with multiplication and comultiplication
by applying his restriction functor and induction functor to constructible functions.
However, the proof of the compatibility of Lusztig's comultiplication and  multiplication for the whole Ringel-Hall algebra essentially depends on the proof of the Green formula. In the end of this section, we show that the twist of $\mathcal{CF}^{F}(Q)$ is isomorphic to the twisted Ringel-Hall algebra $\mathcal{H}^{tw}(\mathcal{A})$. 
In Section 4, we extend the geometric realization of a quantum group  to the whole Ringel-Hall algebra under Lusztig's framework.  We obtain the generic Ringel-Hall algebra as the direct sum of Grothendieck groups of the derived categories of a class of Weil complexes. The simple perverse sheaves provide the canonical basis. We show that the compatibility of the induction and restriction functor holds for these perverse sheaves.
In Section 5, we come back to the case of quantum groups considered by Lusztig and Bozec. We point out that the algebras defined by them are subalgebras of the generic Ringel-Hall algebra and the canonical bases considered by them are subsets of our canonical basis.
We have defined the following map in Section 4,  $$\chi^{F^s}: \mathbf{K}_{w}\rightarrow \mathcal{CF}^{F^s}(Q).$$
In Section 6, we shall determinate the image of $\chi^{F^s}$ depending on Kac's and Hua's results.

\section{A revisit of Ringel-Hall algebras as Hopf algebras}
We recall the definition of the Ringel-Hall algebra $\mathcal{H}(\mathcal{A})$ for the hereditary abelian category $\mathcal{A}=\mathrm{mod} kQ=\mathrm{Rep}_{k}Q$, where $k=\mathbb{F}_q$ is a finite field with $q=p^e$ elements for some prime number $p$ and $Q$  is a finite quiver.

For $M\in \mathcal{A}$, we denote
by $\mathrm{\underline{dim}}M$ the dimension vector in $\mathbb{N}Q_0$ and define the Euler-Ringel form on $\mathbb{N}Q_0$ as follows:
$$
\langle \mathrm{\underline{dim}}M, \mathrm{\underline{dim}}N\rangle=\mathrm{dim}_k\mathrm{Hom}_{\mathcal{A}}(M, N)-\mathrm{dim}_k\mathrm{Ext}^1_{\mathcal{A}}(M, N).
$$
For $M, N$ and $L\in \mathrm{mod} kQ$, we denote by  $\mathcal{F}_{MN}^L$ the set $\{X\subset L\mid X\in \mathrm{mod} kQ, X\cong N, L/X\cong M\}$ and $\mathrm{Ext}_{\mathcal{A}}^1(M, N)_L$ the subset of $\mathrm{Ext}^1_{\mathcal{A}}(M, N)$ with the middle term isomorphic to $L$.  For $X\in \mathcal{A}$, denote by $\mathrm{Aut}_{\mathcal{A}}X$ the set of automorphism on $X$ in $\mathcal{A}$. Write $F_{MN}^L=|\mathcal{F}_{MN}^L|$, $h_{L}^{MN}=\frac{|\mathrm{Ext}_{\mathcal{A}}^1(M, N)_L|}{|\mathrm{Hom}_{\mathcal{A}}(M, N)|}$ and $a_X=|\mathrm{Aut}_{\mathcal{A}}X|$.

The ordinary Ringel-Hall algebra $\mathcal{H}(\mathcal{A})$ is a $\bbc$-space with isomorphism classes $[X]$ of all $kQ$-modules $X$ as a basis and the multiplication is defined by
$$
[M]*[N]=\sum_{[L]}F_{MN}^L[L]
$$
for $M, N$ and $L\in \mathrm{mod} kQ$. We can endow $\mathcal{H}=\mathcal{H}(\mathcal{A})$ with a comultiplication $\delta: \mathcal{H}\rightarrow  \mathcal{H}\otimes_{\bbc}\mathcal{H}$ by setting
$$
\delta([L])=\sum_{[M], [N]}h_{L}^{MN}[M]\otimes [N]
$$
for $L, M$ and $N\in \mathrm{mod} kQ$.
The comultiplication is compatible with the multiplication via Green's theorem.
\begin{theorem}\cite{Green}
The map $\delta$ is an algebra homomorphism with respect to the twisted multiplication on $\mathcal{H}\otimes \mathcal{H}$ as follows:
$$
([M_1]\otimes [N_1])\circ ([M_2]\otimes [N_2])=q^{-\langle\mathrm{\underline{dim}}M_1, \mathrm{\underline{dim}}N_2\rangle}([M_1]*[M_2])\otimes([N_1]*[N_2])
$$
for any $M_1, M_2, N_1$ and $N_2\in \mathrm{mod} kQ$.
\end{theorem}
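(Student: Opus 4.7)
My plan is to compare the coefficient of a fixed basis element $[X]\otimes [Y]$ on both sides of
\[
\delta\bigl([M_1]*[M_2]\bigr) \;=\; \delta([M_1])\circ \delta([M_2]).
\]
Expanding by the definitions, the left side contributes $\sum_{[L]} F_{M_1M_2}^L\, h_L^{XY}$, and the right side contributes
\[
\sum_{[A_1],[A_2],[B_1],[B_2]} q^{-\langle \mathrm{\underline{dim}}A_1,\,\mathrm{\underline{dim}}B_2\rangle}\, h_{M_1}^{A_1B_1}\, h_{M_2}^{A_2B_2}\, F_{A_1A_2}^X\, F_{B_1B_2}^Y.
\]
Thus the theorem reduces to the classical Green formula, a purely numerical identity asserting the equality of these two expressions for arbitrary $M_1,M_2,X,Y$. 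Once this identity is established the twisted multiplication on $\mathcal{H}\otimes\mathcal{H}$ and the comultiplication $\delta$ match, and the theorem is immediate.

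\textbf{Double count via the $3\times 3$ lemma.} To prove the numerical identity I would first use the Riedtmann--Peng relation $a_M a_N F_{MN}^L = a_L h_L^{MN}$ to convert all subobject counts into extension counts and back, so that both sides of the formula are expressed in a common normalization. Each side can then be interpreted as a weighted count of ``nine-term configurations'' in $\mathcal{A}$: commutative $3\times 3$ diagrams with exact rows and columns whose outer entries are the modules $A_1,A_2,B_1,B_2$, whose middle column is a short exact sequence $0\to X\to L\to Y\to 0$, and whose middle row is $0\to M_1\to L\to M_2\to 0$. The left side of the Green formula counts such configurations by first choosing the middle row (producing $L$) and then the middle column ($X$ inside $L$ with quotient $Y$); the right side counts them by first choosing the four corner sequences ($A_i\hookrightarrow M_i\twoheadrightarrow B_i$ for $i=1,2$, and $A_1\hookrightarrow X\twoheadrightarrow A_2$, $B_1\hookrightarrow Y\twoheadrightarrow B_2$) and then invoking the nine-lemma to fit them into a common $3\times 3$ diagram. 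The bijection between the two counts is the content of the abelian-category nine-lemma.

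\textbf{The main obstacle.} The delicate point is verifying that the Euler-form weight $q^{-\langle \mathrm{\underline{dim}}A_1,\mathrm{\underline{dim}}B_2\rangle}$ arises with precisely the right exponent. This twist does not come from counting nine-term configurations directly, but is forced by the $|\mathrm{Hom}(M,N)|$ denominators in the $h$-numbers: when one trades between $F_{MN}^L$ and $h_L^{MN}$ using Riedtmann--Peng, each $|\mathrm{Hom}|$ contributes $q^{\dim\mathrm{Hom}}$, each $|\mathrm{Ext}^1|$ contributes $q^{\dim\mathrm{Ext}^1}$, and reconciling the two counting schemes forces the difference $\dim\mathrm{Hom}(A_1,B_2)-\dim\mathrm{Ext}^1(A_1,B_2)=\langle \mathrm{\underline{dim}}A_1,\mathrm{\underline{dim}}B_2\rangle$ to appear explicitly. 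The technical heart of the proof is therefore to control $\dim\mathrm{Hom}$ and $\dim\mathrm{Ext}^1$ along the long exact sequences induced by the rows and columns of the nine-term diagram, and to show that all auxiliary Euler-form contributions cancel except for the single term $\langle \mathrm{\underline{dim}}A_1,\mathrm{\underline{dim}}B_2\rangle$. This is precisely the bookkeeping in Green's original argument, and the point at which one expects a geometric/sheaf-theoretic refinement (as pursued later in the paper) to streamline the calculation.
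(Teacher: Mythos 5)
Your first paragraph is fine: the theorem is indeed equivalent to the numerical Green formula, and this is exactly the reduction the paper makes. The trouble is in the second and third paragraphs, which together constitute the entire content of Green's theorem, and which contain a genuine error.

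You claim that the two sides of the Green formula count the same set of ``nine-term configurations'' from two directions and that ``the bijection between the two counts is the content of the abelian-category nine-lemma.'' This is not true, and it is precisely the point where the subtlety of Green's theorem lives. What the left side counts (after the Riedtmann--Peng normalization) is essentially the set $\widetilde{Q}$ of crossings: pairs of short exact sequences $0\to N\to L\to M\to 0$ and $0\to N'\to L\to M'\to 0$ through a common middle term $L$, taken up to the $G_\gamma$-action. What the right side counts is essentially the set $\widetilde{\mathcal O}$ of ``squares'' or four-corner diagrams. There is a natural map $\widetilde{f}\colon \widetilde{Q}\to\widetilde{\mathcal O}$ obtained by taking kernels and cokernels, but it is \emph{not} a bijection: its fibers are torsors under $\mathrm{Ext}^1(X,Z)$, where $X$ and $Z$ are the two diagonally opposite corners. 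Moreover, passing from crossings $Q$ to orbits $\widetilde{Q}$ involves stabilizers of size $|G^F_\gamma|/|\mathrm{Hom}(X,Z)|$. The factor $q^{-\langle \underline{\dim}X,\underline{\dim}Z\rangle}=|\mathrm{Ext}^1(X,Z)|/|\mathrm{Hom}(X,Z)|$ comes exactly from these two sources. It cannot be produced by bookkeeping the $|\mathrm{Hom}|$ and $|\mathrm{Ext}^1|$ factors inside the $h$-numbers as you propose, because $X$ and $Z$ are opposite corners of the diagram: the pair $(X,Z)$ is not the pair of ends of any of the five short exact sequences in the configuration, so $\mathrm{Hom}(X,Z)$ and $\mathrm{Ext}^1(X,Z)$ never appear in any $h^{\,\cdot}_{\,\cdot\cdot}$ or $F^{\,\cdot}_{\,\cdot\cdot}$ on either side. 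Your ``main obstacle'' paragraph therefore misdiagnoses where the twist comes from.

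The concrete gap, then, is the missing analysis of the failure of injectivity of the crossings-to-squares map. This is the part of the paper's argument (following Schiffmann's reformulation of Green's proof, reproduced in Section 3 of this paper) in which one computes (i) the size of the $G^F_\gamma$-stabilizer on a crossing, which involves $|\mathrm{Hom}(\mathrm{Coker}\,b'a,\mathrm{Ker}\,b'a)|=|\mathrm{Hom}(X,Z)|$, and (ii) the cardinality $|\mathrm{Ext}^1(X,Z)|$ of the fibers of $\widetilde{f}\colon\widetilde{Q}\to\widetilde{\mathcal O}$. Without both of these counts you cannot obtain the Euler-form twist, and as written your argument would produce the untwisted (and false) formula. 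Your strategy of converting $F$'s to $h$'s via Riedtmann--Peng is consistent with the paper but does not substitute for this step.
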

The theorem is equivalent to  the following Green formula:
$$
a_{M_1}a_{M_2}a_{N_1}a_{N_2}\sum_{[L]}F_{M_1N_1}^LF_{M_2N_2}^La^{-1}_L$$$$=\sum_{[X],[Y_1],[Y_2],[Z]}\frac{|\mathrm{Ext}^1_{\mathcal{A}}(X,Z)|}{|\mathrm{Hom}_{\mathcal{A}}(X, Z)|}F_{XY_1}^{M_1}F_{XY_2}^{M_2}F_{Y_2Z}^{N_1}F_{Y_1Z}^{N_2}a_{X}a_{Y_1}a_{Y_2}a_{Z}.
$$

Define a symmetric bilinear form on $\mathcal{H}$ by setting $$([M], [N])=\frac{\delta_{M, N}}{a_M},$$
where $\delta_{M, N}$ is equal to $1$ if $M\cong N$ and $0$ otherwise. This form is called Green's Hopf pairing. It is clear that Green's Hopf pairing is a non degenerate bilinear form over $\mathcal{H}$.  The following proposition shows that the  comultiplication is dual to the multiplication, i.e., the comultiplication can be viewed as the multiplication over $\mathcal{H}^*=\mathrm{Hom}_{\bbc}(\mathcal{H}, \bbc).$
\begin{Prop}
The comultiplication is left adjoint to the multiplication with respect to Green's Hopf pairing, i.e.,
for $a, b, c\in \mathcal{H}$, $$(a, bc)=(\delta(a), b\otimes c ),$$ where
the bilinear form on $\mathcal{H}\otimes\mathcal{H}$ is given by $(a\otimes b,c\otimes d)=(a,c)(b,d)$ for any $a,b,c,d\in\mathcal{H}$.
\end{Prop}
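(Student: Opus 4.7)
The plan is to test the identity $(a, bc) = (\delta(a), b\otimes c)$ on basis elements of $\mathcal{H}$ and reduce it to the classical Riedtmann--Peng formula relating Hall numbers to extensions.

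First, by $\bbc$-trilinearity of both sides in $a, b, c$, it suffices to take $a=[L]$, $b=[M]$, $c=[N]$. Expanding $[M]*[N] = \sum_{[L']} F_{MN}^{L'}[L']$ and using the diagonal formula $([X],[Y]) = \delta_{X,Y}/a_X$ for Green's pairing, the left-hand side $([L],[M]*[N])$ simplifies at once to $F_{MN}^L/a_L$. Expanding $\delta([L]) = \sum_{[M'],[N']} h_L^{M'N'}[M']\otimes[N']$ and using the product pairing $(a\otimes b, c\otimes d) = (a,c)(b,d)$ together with the same diagonal formula, the right-hand side $(\delta([L]),[M]\otimes[N])$ simplifies to $h_L^{MN}/(a_M a_N)$.

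Thus the proposition reduces to the single numerical identity
\[
a_M a_N F_{MN}^L \;=\; a_L h_L^{MN} \;=\; \frac{a_L \cdot |\mathrm{Ext}^1_{\mathcal{A}}(M,N)_L|}{|\mathrm{Hom}_{\mathcal{A}}(M,N)|}.
\]
This is the classical Riedtmann--Peng formula, proved by a standard orbit count: the set of pairs $(i,p)$ of morphisms forming a short exact sequence $0 \to N \xrightarrow{i} L \xrightarrow{p} M \to 0$ has cardinality $F_{MN}^L \cdot a_M a_N$ (each submodule $X\subset L$ with $X\cong N$, $L/X\cong M$ contributes $a_M a_N$ pairs via choices of isomorphisms), while $\mathrm{Aut}_{\mathcal{A}}(L)$ acts on these pairs with stabilizer of size $|\mathrm{Hom}_{\mathcal{A}}(M,N)|$, and the orbit set is in natural bijection with $\mathrm{Ext}^1_{\mathcal{A}}(M,N)_L$. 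Equating the two counts of pairs gives the identity.

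The main (and essentially only) subtlety is the stabilizer computation in the orbit-counting step of Riedtmann--Peng, which is standard in Hall algebra theory. Once this identity is cited, the proposition follows immediately from the two evaluations above; in particular the adjointness between $\delta$ and multiplication is, at the level of basis coefficients, exactly a repackaging of this well-known counting identity.
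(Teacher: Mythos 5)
Your proof is correct and matches the paper's approach exactly: the paper also observes that the proposition, evaluated on basis elements, reduces to the Riedtmann--Peng identity $F_{MN}^L a_M a_N = h_L^{MN} a_L$, which it then takes as known. The only difference is that you additionally sketch the orbit-counting proof of the Riedtmann--Peng formula (counting short exact sequences $0\to N\to L\to M\to 0$ and quotienting by the $\mathrm{Aut}(L)$-action with stabilizer $\mathrm{Hom}(M,N)$), which the paper leaves implicit; your sketch is accurate.
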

The proposition is equivalent to that the Riedtmann-Peng formula
$$
F_{MN}^La_Ma_N=h_{L}^{MN}a_L
$$
holds for any $M, N$ and $L\in \mathrm{mod} kQ$.

It is easy to generalize the multiplication and comultiplication to the $r$-fold versions for $r\geq 2.$ For $M_1,\cdots, M_r, M\in \mathcal{A}$, set $\mathcal{F}_{M_1,\cdots,M_r}^M$ to be the set
$$
\{0=X_{0}\subseteq X_1\subseteq\cdots\subseteq X_r= M\mid X_i\in \mathcal{A}, X_{i+1}/X_i\cong M_{r-i}, i=0,1,\cdots, r-1\}
$$
and $F_{M_1\cdots M_r}^M=|\mathcal{F}_{M_1,\cdots,M_r}^M|.$ Then
$$
[M_1]*[M_2]*\cdots *[M_r]=\sum_{[M]}F_{M_1\cdots M_r}^M[M].
$$
The $r$-fold comultiplication $\delta^r$ can be defined inductively. For $r=1$, $\delta^1=\delta$ and $\delta^{r+1}=(1\otimes\cdots\otimes 1\otimes \delta)\circ \delta^r$ for $r\geq 1.$ Set
$$
\delta^{r-1}([M])=\sum_{[M_1],\cdots, [M_r]}h_{M}^{M_1\cdots M_r}[M_1]\otimes \cdots\otimes [M_r]
$$
for $r\geq 2.$ It is clear that the Riedtmann-Peng formula can be reformulated as
$$
h_{M}^{M_1M_2\cdots M_r}=F^M_{M_1\cdots M_r}a_{M_1}\cdots a_{M_r}a^{-1}_{M}
$$
for $r\geq 2.$

Let $\sigma: \mathcal{H}\rightarrow \mathcal{H}$ be a map such that
$$
\sigma([M])=\delta_{M,0}+\sum_{r\geq 1}(-1)^r\cdot \sum_{[N],[M_1], \cdots, [M_r]\neq 0}h_{M}^{M_1\cdots M_r}F_{M_1\cdots M_r}^{N}[N]
$$
for $M\in \mathcal{A}$. We call $\sigma$ the antipode of $\mathcal{H}.$

One can also define the twisted versions of the multiplication and comultiplication over $\mathcal{H}(\mathcal{A})$ by setting
$$
[M]\cdot[N]=v_q^{\langle \mathrm{\underline{dim}}M, \mathrm{\underline{dim}}N\rangle}[M]*[N]
$$
and
$$
\delta^t([L])=\sum_{[M], [N]}v_q^{\langle \mathrm{\underline{dim}}M, \mathrm{\underline{dim}}N\rangle}h_{L}^{MN}[M]\otimes [N],
$$
where $v_q=\sqrt{q}$.

Similarly, the $r$-fold twisted version of the multiplication is denoted by $[M_1]\cdot[M_2]\cdots[M_r]$ and the $r$-fold twisted version of the comultiplication is denoted by $\delta^{t,r}$.
The twisted version of the antipode over $\mathcal{H}(\mathcal{A})$ is defined as
$$\sigma^t([M])=\delta_{M,0}+\sum_{r\geq 1}(-1)^r\cdot \sum_{[N],[M_1], \cdots, [M_r]\neq 0}v_q^{2\sum_{i\leq j}\langle \mathrm{\underline{dim}}M_i, \mathrm{\underline{dim}}M_j\rangle}h_{M}^{M_1\cdots M_r}F_{M_1\cdots M_r}^{N}[N].$$
We denote by $\mathcal{H}^{tw}(\mathcal{A})$ the twisted version of $\mathcal{H}(\mathcal{A}).$

On the relations between the twisted version of antipode with the twisted versions of multiplication and comultiplication, we have
$$\sigma^t(x\cdot y)=\sigma^t(y)\cdot\sigma^t(x) \textrm{ for any $x,y\in\mathcal{H}(\mathcal{A})$},$$
$$\delta^t(\sigma^t(x))=(\sigma^t\otimes\sigma^t){\delta^{t,op}}(x) \textrm{ for any $x\in\mathcal{H}(\mathcal{A})$},$$
and
$$
\underline{m}^t(\sigma^t\otimes 1)\delta^t([M])=\underline{m}^t(1\otimes\sigma^t)\delta^t([M])
=\left\{
\begin{array}{c}
\textrm{$0$ \,\,\,\,\,\,if $[M]\neq[0]$},\\
\textrm{$[0]$ \,\,\,\,\,\,if $[M]=[0]$},
\end{array}\right.
$$
where $\delta^{t,op}$ is the composition of $\delta^{t}$ with the linear map $x\otimes y\mapsto y\otimes x$ and $\underline{m}^t(x,y)=x\cdot y$.

Let $\mathcal{C}^{tw}(\mathcal{A})$ be the subalgebra of the twisted Ringel-Hall algebra $\mathcal{H}^{tw}(\mathcal{A})$ generated by isomorphism classes of simple $kQ$-modules. Denoted by $\mathcal{C}_{\mathbb{Z}[v_q, v_q^{-1}]}^{tw}(\mathcal{A})$ the integral form of $\mathcal{C}^{tw}(\mathcal{A})$.

\begin{theorem}\cite{Ringel1990,Green,Xiao}
Let $\mathfrak{g}_Q$ be the Kac-Moody algebra associated to the quiver $Q$ and $U^+_{v_q}(\mathfrak{g}_Q)$ be the positive part of the quantum group $U_{v}(\mathfrak{g}_Q)$ specialized at $v=v_q$.  Then there is an isomorphism of algebras
$$
\Psi: U^+_{v_q}(\mathfrak{g}_Q)\rightarrow \mathcal{C}^{tw} _{\mathbb{Z}[v_q, v_q^{-1}]}(\mathcal{A})\bigotimes_{\mathbb{Z}[v_q, v_q^{-1}]}\mathbb{Q}(v_q)
$$
sending $E_i$ to $[S_i]$ for $i\in Q_0$.
\end{theorem}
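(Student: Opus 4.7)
The plan is to define $\Psi$ on generators by $E_i \mapsto [S_i]$ and then prove (i) that this extends to a well-defined algebra homomorphism from $U_{v_q}^+(\fg_Q)$ and (ii) that the resulting map is bijective. Step (i) is the assignment-plus-relation check; by the universal presentation of $U_{v_q}^+(\fg_Q)$ in terms of Chevalley generators $E_i$ subject to the quantum Serre relations, I need to verify that the elements $[S_i]\in \mathcal{C}^{tw}(\mathcal{A})$ satisfy
\[
\sum_{k=0}^{1-a_{ij}}(-1)^k\begin{bmatrix}1-a_{ij}\\ k\end{bmatrix}_{v_q}[S_i]^{1-a_{ij}-k}\cdot[S_j]\cdot[S_i]^k=0
\]
for all $i\neq j$ in $Q_0$, where $a_{ij}$ is the generalized Cartan matrix entry read off from the Euler--Ringel form. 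These relations can be checked one pair $(i,j)$ at a time, so the verification reduces to a computation inside the subalgebra generated by $[S_i]$ and $[S_j]$; I would perform it by expanding the twisted powers using the definition of the multiplication, replacing each $F^L_{MN}$ by a count of filtrations of $L$ whose factors are copies of $S_i$ and $S_j$, and showing that the alternating sum vanishes by a classical Ringel-type identity for Hall numbers in the Kronecker/Dynkin rank-two case.

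Once (i) is established, $\Psi$ is a well-defined algebra map, and it is automatically surjective because the target $\mathcal{C}^{tw}(\mathcal{A})\otimes \mathbb{Q}(v_q)$ is generated by the images $[S_i]$ by definition. For injectivity in step (ii), I would use Green's comultiplication $\delta^t$ together with Green's Hopf pairing $(\,-\,,\,-\,)$ recalled earlier in the excerpt. The pairing is non-degenerate on $\mathcal{H}^{tw}(\mathcal{A})$, and its restriction to $\mathcal{C}^{tw}(\mathcal{A})$ corresponds, under $\Psi$, to the Lusztig--Kashiwara bilinear form on $U_{v_q}^+(\fg_Q)$ normalized so that $(E_i,E_i)$ matches $([S_i],[S_i])=1/a_{S_i}$. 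Since the Lusztig--Kashiwara form on $U_{v_q}^+(\fg_Q)$ is non-degenerate, any element $x\in\ker\Psi$ would have to pair trivially with every element of $U_{v_q}^+(\fg_Q)$, forcing $x=0$; hence $\Psi$ is injective.

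The main obstacle is the Serre-relation verification in step (i), particularly when the two simples $S_i$ and $S_j$ are connected by several arrows and when $Q$ carries loops at some vertex (where the naive rank-two case degenerates and the ``Chevalley generator at a loop'' is not simply a single $[S_i]$ but a whole divided-power system). For the loop-free case the argument is Ringel--Green and reduces to the classical identities for Hall numbers of representations of the generalized Kronecker quiver; for quivers with loops one must invoke Xiao's extension and use a modified presentation of $U_{v_q}^+(\fg_Q)$ (or of its Bozec-type enlargement) with generators indexed by simple modules and their self-extensions. I would treat the loop-free case first as the core argument and then indicate how the loop case is handled by exactly the same strategy, with the rank-one vertex subalgebra replaced by its appropriate Heisenberg-like refinement, so that step (ii) via Green's pairing continues to apply verbatim.
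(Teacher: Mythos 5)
The paper offers no proof of this theorem; it is stated as a recalled result with the three citations \cite{Ringel1990,Green,Xiao}, so there is no internal argument to compare against. Your proposal is a sound reconstruction of exactly that Ringel--Green line: (i) check the quantum Serre relations for the $[S_i]$ inside $\mathcal{C}^{tw}(\mathcal{A})$, which by locality reduces to a rank-two Hall-number identity on the generalized Kronecker quiver spanned by two vertices $i\neq j$; (ii) get surjectivity for free from the definition of $\mathcal{C}^{tw}(\mathcal{A})$ as the subalgebra generated by the $[S_i]$; and (iii) get injectivity by identifying the restriction of Green's Hopf pairing with Lusztig's bilinear form on $U^+_{v_q}(\mathfrak{g}_Q)$ and invoking its non-degeneracy.

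Two small points worth sharpening. First, in step (iii) you implicitly need that $\mathcal{C}^{tw}(\mathcal{A})$ is closed under Green's comultiplication $\delta^t$, so that the pulled-back form on $U^+_{v_q}$ actually satisfies the Hopf adjointness $(x,yz)=(\delta(x),y\otimes z)$ which characterizes Lusztig's form; this closure is a consequence of Green's theorem (that $\delta^t$ is an algebra map) together with the primitivity $\delta^t([S_i])=[S_i]\otimes 1 + 1\otimes[S_i]$, so it should be stated rather than assumed. Second, the theorem as written presupposes a genuine Kac--Moody algebra $\mathfrak{g}_Q$, hence a loop-free $Q$ (the generalized Cartan matrix condition $a_{ii}=2$ fails at a loop); the loop case you worry about is really outside the scope of this cited theorem, and the paper treats that situation separately in Section 5 via Bozec's framework, where the generators are no longer the single simples at imaginary vertices. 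So for the purposes of this statement it is cleaner to work entirely in the loop-free setting and not entangle it with the Bozec extension.
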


\section{Lusztig's construction of Hall algebras via functions}
In this section, we recall Lusztig's construction of Hall algebras via functions in \cite{Lusztig1998} and compare it with Ringel-Hall algebras. Let $k=\mathbb{F}_q$  as above and $\mathbb{K}=\overline{\mathbb{F}}_q.$ Let $Q=(Q_0, Q_1, s, t)$ be a quiver. Given a dimension vector $\alpha=\sum_{i\in Q_0}\alpha_ii\in \mathbb{N}Q_0,$ define the variety
$$
\mathbb{E}_{\alpha }:=\mathbb{E}_{\alpha }(Q)=\bigoplus_{h\in Q_1}\mathrm{Hom}_{\mathbb{K}}(\mathbb{K}^{\alpha_{s(h)}}, \mathbb{K}^{\alpha_{t(h)}}).
$$
Any element $x=(x_{h})_{h\in Q_1}$ in $\mathbb{E}_{\alpha }(Q)$ defines
a representation $M(x)=(\mathbb{K}^{\alpha }, x)$ of $Q$ with
$\mathbb{K}^{\alpha }=\bigoplus_{i\in Q_0}\mathbb{K}^{\alpha_i}$. The algebraic group
$$G_{\alpha }:=G_{\alpha }(Q)=\prod_{i\in Q_0}GL(\alpha_i,\mathbb{K})$$  acts on
 $\mathbb{E}_{\alpha }$ by $(x_{h})^{g}_{h\in Q_1}=(g_{t(h)}x_{h}g_{s(h)}^{-1})_{h\in Q_1}$ for $g=(g_i)_{i\in Q_0}\in G_{\alpha }$ and
  $(x_{h})_{h\in Q_1}\in\mathbb{E}_{\alpha }.$ The isomorphism class of a $\mathbb{K}Q$-module $X$ is just the orbit of $X$.  The quotient stack $[\mathbb{E}_{\alpha }/G_{\alpha }]$ parametrizes the isomorphism classes of $\mathbb{K}Q$-modules of dimension vector $\alpha .$

Let $F$ be the Frobenius automorphism of $\mathbb{K}$, i.e., $F(x)=x^q$. The $F$-fixed subfield is just $\mathbb{F}_q$. This induces
an isomorphism $\mathbb{E}_{\alpha }\rightarrow \mathbb{E}_{\alpha }$ sending $(((x_{h})_{ij})_{d_{s(h)}\times d_{t(h)}})_{h\in Q_1}$ to $(((x_{h})^q_{ij})_{d_{s(h)}\times d_{t(h)}})_{h\in Q_1}$. We will denote all induced map by $F$ if it does not cause any confusion.

For any $\mathbb{K}Q$-module $M(x)=(\mathbb{K}^{\alpha }, x)$, set $M(x)^{[q]}=F(M(x)).$ The representation $M(x)\in \mathbb{E}_{\alpha }$ is $F$-fixed if $M(x)\cong M(x)^{[q]}.$ The last condition is equivalent to say that $M(x)$ is defined over $\mathbb{F}_q$, i.e., there exists a $kQ$-module $M_0(x)$ such that $M(x)\cong M_0(x)\otimes_{\mathbb{F}_q}\mathbb{K}$ (\cite{Hua2}). We denoted by $\mathbb{E}^F_{\alpha }$ and $G^F_{\alpha }$ the $F$-fixed subset of $\mathbb{E}_{\alpha }$ and $G_{\alpha }$ respectively. For a $kQ$-module $M\in \mathbb{E}^F_{\alpha }$, let $\mathcal{O}_M$ denote the orbit of $M$ in $\mathbb{E}_{\alpha }$ and $\mathcal{O}^F_M$ the $F$-fixed subset of $\mathcal{O}_M$.

Let $l\neq p$ be a prime number and  $\overline{\mathbb{Q}}_l$ be the algebraic closure of the field of $l$-adic numbers. Fix a square root $v_q=\sqrt{q}\in \overline{\mathbb{Q}}_l.$ Define $\mathcal{CF}^F_{\alpha }$ to be the $\overline{\mathbb{Q}}_l$-space generated by $G^F_{\alpha }$-invariant functions: $\mathbb{E}^F_{\alpha }\rightarrow \overline{\mathbb{Q}}_l.$  We will endow the vector space $\mathcal{CF}^F(Q)=\bigoplus_{\alpha }\mathcal{CF}^F_{\alpha }$ with a multiplication and a comultiplication.

As tools, we should recall two functors: the pushforward functor and the inverse image functor in \cite{Lusztig1998}. Given two finite sets $X, Y$ and a map $\phi: X\rightarrow Y$. Let $\mathcal{CF}(X)$ be the vector space of all functions $X\rightarrow \overline{\mathbb{Q}}_l$ over $X.$  Define the pushforward of $\phi$ to be
$$
\phi_{!}: \mathcal{CF}(X)\rightarrow \mathcal{CF}(Y),\,\,\phi_{!}(f)(y)=\sum_{x\in \phi^{-1}(y)}f(x)
$$
and the inverse image of $\phi$ to be
$$
\phi^{*}: \mathcal{CF}(Y)\rightarrow \mathcal{CF}(X),\,\,\phi^{*}(g)(x)=g(\phi(x)).
$$

First, We shall define the multiplication over $\mathcal{CF}^F(Q).$
Let $\mathbb{E}^{''}$ be the variety of all pairs $(x,W)$ where $x\in \mathbb{E}_{\alpha+\beta}$ and $(W, x|_W)$ is  a $\mathbb{K}Q$-submodule of $(\mathbb{K}^{\alpha+\beta}, x)$ with dimension vector $\beta$. Let $\mathbb{E}'$ be the variety of all quadruples $(x, W, \rho_1, \rho_2)$ where $(x, W)\in \mathbb{E}^{''}$ and $\rho_1: \mathbb{K}^{\alpha+\beta}/W\cong \mathbb{K}^{\alpha},$ $\rho_2: W\cong\mathbb{K}^{\beta}$ are linear isomorphisms. Consider the following diagram
$$
\xymatrix{\mathbb{E}_{\alpha}\times\mathbb{E}_{\beta}&\mathbb{E}'\ar[r]^{p_2}\ar[l]_--{p_1}&\mathbb{E}''\ar[r]^{p_3}&\mathbb{E}_{\alpha+\beta}},
$$
where $p_2, p_3$ are natural projections and $p_1(x, W, \rho_1, \rho_2)=(x',x'')$ such that
$$x'_h(\rho_1)_{s(h)}=(\rho_1)_{t(h)}x_h\,\textrm{ and }\,
x''_h(\rho_2)_{s(h)}=(\rho_2)_{t(h)}x_h$$
for any $h\in Q_1.$

The groups $G_{\alpha}\times G_{\beta}$ and $G_{\alpha+\beta}$ naturally act on $\mathbb{E}'$. The map $p_1$ is $G_{\alpha+\beta}\times G_{\alpha}\times G_{\beta}$-equivariant under the trivial action of $G_{\alpha+\beta}$ on $\mathbb{E}_{\alpha}\times\mathbb{E}_{\beta}.$ The map $p_2$ is a principal $G_{\alpha}\times G_{\beta}$-bundle.

Applying the Frobenius map $F$, we can define the above diagram over $\mathbb{F}_q$ as follows:
$$
\xymatrix{\mathbb{E}^F_{\alpha}\times\mathbb{E}^F_{\beta}&\mathbb{E}'^F\ar[r]^{p_2}\ar[l]_--{p_1}&\mathbb{E}''^{F}\ar[r]^{p_3}&\mathbb{E}^F_{\alpha+\beta}}.
$$
There is a linear map (called the induction map)
$$
\underline{m}_{\alpha,\beta}^F=\underline{m}_{\alpha,\beta}: \mathcal{CF}_{G^F_{\alpha}\times G^F_{\beta}}(\mathbb{E}^F_{\alpha}\times\mathbb{E}^F_{\beta})\rightarrow \mathcal{CF}_{ G^F_{\alpha+\beta}}(\mathbb{E}^F_{\alpha+\beta})=\mathcal{CF}^F_{\alpha+\beta}
$$
sending $g$ to $|G^F_{\alpha}\times G^F_{\beta}|^{-1}(p_3)_{!}(p_2)_{!}p^*_1(g).$
Iteratively, one can define the $r$-fold version $\underline{m}_{\alpha_1,\alpha_2,\dots,\alpha_r}^r$ of $\underline{m}_{\alpha_1,\alpha_2}$ for $r\geq 1$ by setting $\underline{m}_{\alpha_1}^1=id$, $\underline{m}_{\alpha_1,\alpha_2}^2=\underline{m}_{\alpha_1,\alpha_2}$ and $\underline{m}_{\alpha_1,\alpha_2,\dots,\alpha_{r+1}}^{r+1}=\underline{m}_{\alpha_1,\alpha_2+\alpha_3+\dots+\alpha_{r+1}}\circ (1\otimes\underline{m}_{\alpha_2,\alpha_3,\dots,\alpha_{r+1}}^r)$ for $r\geq 2.$


Now we can define the multiplication over $\mathcal{CF}^F(Q)$. For $f_{\alpha}\in \mathcal{CF}^F_{\alpha}, f_{\beta}\in \mathcal{CF}^F_{\beta}$ and $(x_1, x_2)\in \mathbb{E}_{\alpha}\times\mathbb{E}_{\beta}$, set $g(x_1, x_2)=f_{\alpha}(x_1)f_{\beta}(x_2)$. Then $g\in\mathcal{CF}_{G_{\alpha}\times G_{\beta}}(\mathbb{E}^F_{\alpha}\times\mathbb{E}^F_{\beta})$ and define the multiplication by
$$
f_{\alpha}*f_{\beta}=\underline{m}_{\alpha,\beta}(g).
$$

The following lemma is well-known from Lusztig (see \cite{Lin}).
\begin{lemma}\label{multiplication}
Given three $kQ$-modules $M, N$ and $L$, let $1_{\mathcal{O}_M}, 1_{\mathcal{O}_N}$ and $1_{\mathcal{O}_L}$ be the characteristic functions over orbits, respectively. Then $$1_{\mathcal{O}^F_M}*1_{\mathcal{O}^F_N}(L)=F_{MN}^L.$$
\end{lemma}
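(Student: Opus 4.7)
The plan is to unwind the definition of the induction map $\underline{m}_{\alpha,\beta}$ applied to the product function $g(x_1,x_2)=1_{\mathcal{O}^F_M}(x_1)1_{\mathcal{O}^F_N}(x_2)$, evaluate at a point $x\in \mathbb{E}^F_{\alpha+\beta}$ with $M(x)\cong L$, and count fibers layer by layer along the diagram $\mathbb{E}^F_\alpha\times\mathbb{E}^F_\beta\xleftarrow{p_1}\mathbb{E}'^F\xrightarrow{p_2}\mathbb{E}''^F\xrightarrow{p_3}\mathbb{E}^F_{\alpha+\beta}$. Setting $\alpha=\underline{\dim}M$ and $\beta=\underline{\dim}N$ (if these do not add up to $\underline{\dim}L$ both sides vanish trivially), the aim is to show that the pushforward count equals $|G^F_\alpha|\,|G^F_\beta|\,F_{MN}^L$; dividing by $|G^F_\alpha\times G^F_\beta|$ then gives the claim.

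First I would describe the fiber of $p_3$ over $x$: it is precisely the set of $\mathbb{K}Q$-submodules $W$ of $M(x)=L$ of dimension vector $\beta$, which over $\mathbb{F}_q$ is the set of $kQ$-submodules of $L$ of dimension $\beta$. Next, the fiber of $p_2$ over $(x,W)$ consists of pairs of $\mathbb{F}_q$-linear isomorphisms $\rho_1\colon \mathbb{F}_q^{\alpha+\beta}/W\xrightarrow{\sim}\mathbb{F}_q^{\alpha}$ and $\rho_2\colon W\xrightarrow{\sim}\mathbb{F}_q^{\beta}$ (componentwise for each $i\in Q_0$), and has cardinality $|G^F_\alpha|\,|G^F_\beta|$.

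The key observation is how $p_1^*(g)$ behaves on this fiber. By the defining relations $x'_h(\rho_1)_{s(h)}=(\rho_1)_{t(h)}x_h$ and $x''_h(\rho_2)_{s(h)}=(\rho_2)_{t(h)}x_h$, the module $M(x')$ is isomorphic to the quotient $L/W$ and $M(x'')$ to the submodule $W$, independently of the specific choice of $\rho_1,\rho_2$. Hence $p_1^*(g)(x,W,\rho_1,\rho_2)=1_{\mathcal{O}^F_M}(x')\,1_{\mathcal{O}^F_N}(x'')$ equals $1$ exactly when $L/W\cong M$ and $W\cong N$, and vanishes otherwise.

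Combining these observations, $(p_2)_!p_1^*(g)(x,W)=|G^F_\alpha|\,|G^F_\beta|$ whenever $L/W\cong M$ and $W\cong N$, and is zero otherwise, so
$$
(p_3)_!(p_2)_!p_1^*(g)(x)=|G^F_\alpha|\,|G^F_\beta|\cdot\#\{W\subseteq L\mid L/W\cong M,\ W\cong N\}=|G^F_\alpha|\,|G^F_\beta|\,F^L_{MN}.
$$
Dividing by $|G^F_\alpha\times G^F_\beta|$ yields $F^L_{MN}$, as required. The only delicate point is bookkeeping in step three—verifying that the two transport equations really force $M(x')\cong L/W$ and $M(x'')\cong W$ as $\mathbb{F}_q Q$-modules for every admissible $(\rho_1,\rho_2)$—since everything else is a straightforward cardinality count.
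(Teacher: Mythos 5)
Your argument is correct and is exactly the standard fiber-by-fiber unwinding one expects here: the $p_3$ fiber over a point representing $L$ is the set of submodules $W\subseteq L$ of dimension vector $\beta$, the $p_2$ fiber contributes $|G^F_\alpha||G^F_\beta|$, and the transport relations make $p_1^*(g)$ constantly $1$ or $0$ on that fiber according to whether $W\cong N$ and $L/W\cong M$, after which dividing by $|G^F_\alpha\times G^F_\beta|$ gives $F_{MN}^L$. The paper itself offers no proof and simply cites Lin's survey for this well-known fact, so your computation supplies precisely the argument that the citation points to; the one genuinely delicate point you flag — that transporting along any admissible $(\rho_1,\rho_2)$ produces representations isomorphic to $L/W$ and $W$ respectively — is indeed the crux, and it holds because $\rho_1,\rho_2$ implement module isomorphisms by construction of $p_1$.
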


We now turn to define the comultiplication over $\mathcal{CF}^F(Q).$ Fix a subspace $W$ of $\mathbb{K}^{\alpha+\beta}$ with $\underline{\mathrm{dim}}W=\beta$ and linear isomorphisms $\rho_1: \mathbb{K}^{\alpha+\beta}/W\cong \mathbb{K}^{\alpha},$ $\rho_2: W\cong\mathbb{K}^{\beta}$. Let $F_{\alpha,\beta}$ be the closed subset of $\mathbb{E}_{\alpha+\beta}$ consisting of all $x\in \mathbb{E}_{\alpha+\beta}$ such that $(W, x|_W)$ is  a $\mathbb{K}Q$-submodule of $(\mathbb{K}^{\alpha+\beta}, x)$ with dimension vector $\beta$. Consider the diagram
$$
\xymatrix{\mathbb{E}_{\alpha}\times\mathbb{E}_{\beta}&F_{\alpha, \beta}\ar[l]_-{\kappa}\ar[r]^{i}&\mathbb{E}_{\alpha+\beta}},
$$
where the map $i$ is the inclusion and $\kappa(x)=p_1(x, W, \rho_1,\rho_2).$ For $(x_1,x_2)\in \mathbb{E}_{\alpha}\times\mathbb{E}_{\beta}$, the fibre $\kappa^{-1}(x_1,x_2)\cong\bigoplus_{h\in Q_1} \mathrm{Hom}_{\mathbb{K}}(\mathbb{K}^{\alpha_{s(h)}}, \mathbb{K}^{\beta_{t(h)}})$ and then $\kappa$ is a vector bundle of dimension $\sum_{h\in Q_1}\alpha_{s(h)}\beta_{t(h)}.$

Applying the Frobenius map $F$, we can define the above diagram over $\mathbb{F}_q$ as follows:
$$
\xymatrix{\mathbb{E}^F_{\alpha}\times\mathbb{E}^F_{\beta}&F^F_{\alpha, \beta}\ar[l]_-{\kappa}\ar[r]^{i}&\mathbb{E}^F_{\alpha+\beta}}.
$$
There is also a linear map (called  the restriction map)
$$
\tilde{\delta}_{\alpha,\beta}^F=\tilde{\delta}_{\alpha,\beta}: \mathcal{CF}_{G^F_{\alpha+\beta}}(\mathbb{E}^F_{\alpha+\beta})\rightarrow\mathcal{CF}_{G^F_{\alpha}\times G^F_{\beta}}(\mathbb{E}^F_{\alpha}\times\mathbb{E}^F_{\beta})
$$
sending $f\in \mathcal{CF}_{G_{\alpha+\beta}}(\mathbb{E}^F_{\alpha+\beta})$ to $\kappa_{!}i^*(f).$ It is clear that there is an isomorphism
$$
\mathcal{CF}^F_{\alpha}\otimes\mathcal{CF}^F_{\beta}\cong \mathcal{CF}_{G^F_{\alpha}\times G^F_{\beta}}(\mathbb{E}^F_{\alpha}\times\mathbb{E}^F_{\beta})
$$
by sending $f\otimes g$ to the function mapping $(x,y)\in \mathbb{E}^F_{\alpha}\times\mathbb{E}^F_{\beta}$ to $f(x)g(y).$ Hence, we can write $$\tilde{\delta}_{\alpha,\beta}^F=\tilde{\delta}_{\alpha,\beta}:\mathcal{CF}^F_{\alpha+\beta}\rightarrow \mathcal{CF}^F_{\alpha}\otimes\mathcal{CF}^F_{\beta}$$
and define the comultiplication $\tilde{\delta}^F=\tilde{\delta}$ over $\mathcal{CF}^F(Q)$, i.e., for $f\in \mathcal{CF}^F_{\gamma}$ and $\alpha+\beta=\gamma$, $$\tilde{\delta}(f)=\sum_{\alpha,\beta;\alpha+\beta=\gamma}\tilde{\delta}_{\alpha,\beta}(f).$$
Iteratively, we can define $\tilde{\delta}_{\alpha_1,\alpha_2,\dots,\alpha_r}^r$ for $r\geq 1$ by setting $\tilde{\delta}_{\alpha_1,\alpha_2}^1=\tilde{\delta}_{\alpha_1,\alpha_2}$ and $\tilde{\delta}_{\alpha_1,\alpha_2,\dots,\alpha_{r+1}}^{r+1}=(1\otimes \cdots \otimes \tilde{\delta}_{\alpha_r,\alpha_{r+1}})\circ \tilde{\delta}_{\alpha_1,\alpha_2,\dots,\alpha_r+\alpha_{r+1}}^r$ for $r\geq 1.$


For $M, N$ and $L$ in $\mathcal{A}=\mathrm{Rep}_{k}Q$, we set $$D_{L}^{MN}=\tilde{\delta}(1_{\mathcal{O}^F_L})(M, N)=\kappa_!i^*(1_{\mathcal{O}^F_L})(M, N).$$
In order to compare this  comultiplication $\tilde{\delta}$ with the comultiplication of Ringel-Hall algebras, we define the twist of $\tilde{\delta}_{\alpha.\beta}$ by $\delta_{\alpha,\beta}=q^{-\sum_{i\in Q_0}\alpha_i\beta_i}\tilde{\delta}_{\alpha,\beta}$ and $\delta$ in the same way.
\begin{lemma}\label{comultiplication}
With the notations in Lemma \ref{multiplication} and $\underline{\mathrm{dim}}M=\alpha, \underline{\mathrm{dim}}N=\beta$, we have $$\delta_{\alpha, \beta}(1_{\mathcal{O}^F_L})(M, N)=h_{L}^{MN}.$$
\end{lemma}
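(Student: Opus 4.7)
The plan is to compute $\kappa_!i^*(1_{\mathcal{O}^F_L})(x_M,x_N)$ for arbitrary orbit representatives $x_M, x_N$, show that the answer is $q^{\sum_i\alpha_i\beta_i}\cdot h_L^{MN}$, and then apply the twist $q^{-\sum_i\alpha_i\beta_i}$ to deduce the lemma. First I would check that $\kappa_!i^*(1_{\mathcal{O}^F_L})$ is $G^F_\alpha\times G^F_\beta$-invariant: this holds because $\kappa$ is equivariant for the Levi action of $G_\alpha\times G_\beta\subset P$ on $F_{\alpha,\beta}$ (with $P$ the parabolic in $G_{\alpha+\beta}$ stabilizing $W$), while $i^*(1_{\mathcal{O}^F_L})$ is $P^F$-invariant. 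Hence the value only depends on the isomorphism classes $M$ and $N$.

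Rather than enumerate the fiber $\kappa^{-1}(x_M,x_N)^F\cap\mathcal{O}^F_L$ directly, I aggregate over orbits and introduce
$$T:=\#\{x\in F^F_{\alpha,\beta}\cap\mathcal{O}^F_L\mid \kappa(x)\in\mathcal{O}^F_M\times\mathcal{O}^F_N\}=|\mathcal{O}^F_M|\cdot|\mathcal{O}^F_N|\cdot \tilde{\delta}_{\alpha,\beta}(1_{\mathcal{O}^F_L})(x_M,x_N).$$
An element of $T$ is an $\mathbb{F}_q$-rational $kQ$-module structure on $V=\mathbb{K}^{\alpha+\beta}$ isomorphic to $L$, with $W$ a submodule of type $N$ and quotient of type $M$. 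I enumerate instead the pairs $(x,\psi)$ where $\psi:L\to(V,x)$ is a $kQ$-isomorphism; this multiplies $T$ by $a_L$. Such pairs are in bijection with pairs $(X,\psi)$, where $X=\psi^{-1}(W)\in\mathcal{F}^L_{MN}$ and $\psi:L\to V$ is a graded $\mathbb{F}_q$-linear isomorphism with $\psi(X)=W$, because pushing the module structure on $L$ forward along $\psi$ recovers $x$ and forces $\psi$ to be a module isomorphism.

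There are $F^L_{MN}$ choices of $X$, and for each fixed $X$ the graded linear isomorphisms $\psi$ with $\psi(X)=W$ decompose node-wise into a linear iso $X_i\to W_i$, a linear iso $L_i/X_i\to V_i/W_i$, and a linear off-diagonal map $L_i/X_i\to W_i$, contributing $|G^F_\beta|\cdot|G^F_\alpha|\cdot q^{\sum_i\alpha_i\beta_i}$. Combining with $|\mathcal{O}^F_M|=|G^F_\alpha|/a_M$ and $|\mathcal{O}^F_N|=|G^F_\beta|/a_N$ yields
$$\tilde{\delta}_{\alpha,\beta}(1_{\mathcal{O}^F_L})(x_M,x_N)=q^{\sum_i\alpha_i\beta_i}\cdot\frac{F^L_{MN}\,a_M\,a_N}{a_L}=q^{\sum_i\alpha_i\beta_i}\cdot h_L^{MN}$$
by the Riedtmann-Peng formula $F^L_{MN}a_Ma_N=h_L^{MN}a_L$ recalled in Section~2; multiplying by $q^{-\sum_i\alpha_i\beta_i}$ finishes the proof.

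The main obstacle is the node-wise parabolic count of linear isomorphisms $L\to V$ respecting the filtration, which must produce exactly $|G^F_\alpha|\cdot|G^F_\beta|\cdot q^{\sum_i\alpha_i\beta_i}$; the factor $q^{\sum_i\alpha_i\beta_i}$ here is precisely what the twist in the definition of $\delta_{\alpha,\beta}$ is designed to absorb. Once the bookkeeping is pinned down, Riedtmann-Peng closes the argument mechanically.
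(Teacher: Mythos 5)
Your proof is correct, and it takes a genuinely different route than the paper's. The paper fixes the pair $(x_M, x_N)$ and directly identifies the fiber $\kappa^{-1}(x_M,x_N)$, over $\mathbb{F}_q$, with the "extension space" $D^F(\alpha,\beta)=\bigoplus_h\mathrm{Hom}_k(k^{\alpha_{s(h)}}, k^{\beta_{t(h)}})$, then invokes the four-term exact sequence
$$0 \to \mathrm{Hom}_{kQ}(M,N)\to\bigoplus_{i\in Q_0}\mathrm{Hom}_k(k^{\alpha_i},k^{\beta_i})\to D^F(\alpha,\beta)\xrightarrow{\ \pi\ }\mathrm{Ext}^1_{kQ}(M,N)\to 0$$
(the standard projective resolution for quiver representations) to conclude that the preimage $D^F(\alpha,\beta)_L := \pi^{-1}\bigl(\mathrm{Ext}^1(M,N)_L\bigr)$ has cardinality $q^{\sum_i\alpha_i\beta_i}\cdot h_L^{MN}$. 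The factor $q^{\sum_i\alpha_i\beta_i}$ arises from the middle term of the exact sequence, and the Riedtmann--Peng formula is not needed. You instead aggregate over the orbit $\mathcal{O}^F_M\times\mathcal{O}^F_N$, trade $x$ for pairs $(x,\psi)$ with $\psi:L\to(V,x)$ a module isomorphism (multiplying by $a_L$), recognize these pairs as a torsor over pairs $(X,\psi)$ with $X\in\mathcal{F}^L_{MN}$, and count the graded flag-preserving isomorphisms $\psi$ nodewise; this produces the same factor $q^{\sum_i\alpha_i\beta_i}$ from the off-diagonal blocks, but you then need Riedtmann--Peng $F^L_{MN}a_Ma_N=h_L^{MN}a_L$ plus the orbit count $|\mathcal{O}^F_M|=|G^F_\alpha|/a_M$ (Lang's theorem, connected stabilizer) to close. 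The paper's argument is self-contained and local at the fiber, while yours is combinatorial and global over orbits; yours is more purely Hall-algebraic but imports Riedtmann--Peng as an external ingredient. Note that the paper's Lemmas 3.1 and 3.2 together with Green's Hopf pairing essentially \emph{imply} Riedtmann--Peng, so the paper's ordering is slightly cleaner logically; there is no circularity in your version since Riedtmann--Peng is an independent classical identity recalled in Section 2, but it is worth being aware of the dependency.
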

\begin{proof}
Suppose $M=(\mathbb{K}^{\alpha}, x_1)$ and $N=(\mathbb{K}^{\beta}, x_2)$. The linear isomorphisms $\rho_1, \rho_2$ induce the module structures of $\mathbb{K}^{\alpha+\beta}/W$ and $W$, denoted by $(\mathbb{K}^{\alpha+\beta}/W, y_{\mathbb{K}^{\alpha+\beta}/W})$ and $(W, y_W)$ respectively. Consider the set
$$S=\{x\in \mathbb{E}^{\alpha+\beta}\mid (W, x|_W)=(W, y_W),$$$$
(\mathbb{K}^{\alpha+\beta}/W, x|_{\mathbb{K}^{\alpha+\beta}/W})=(\mathbb{K}^{\alpha+\beta}/W, y_{\mathbb{K}^{\alpha+\beta}/W}),
(\mathbb{K^{\alpha+\beta}}, x)\cong L\}.$$
Fix a decomposition of the vector space $\mathbb{K}^{\alpha+\beta}=W\oplus \mathbb{K}^{\alpha+\beta}/W.$ Then
$$
S=\left\{x=\left(
        \begin{array}{cc}
          (y_W)_h & d(h) \\
          0 & (y_{\mathbb{K}^{\alpha+\beta}/W})_h \\
        \end{array}
      \right)_{h\in Q_1}\mid d(h)\in \mathrm{Hom}_{\mathbb{K}}(\mathbb{K}^{\alpha_{s(h)}}, \mathbb{K}^{\beta_{t(h)}}),  (\mathbb{K^{\alpha+\beta}}, x)\cong L
\right\}.
$$
Set $D(\alpha, \beta)=\bigoplus_{h\in Q_1} \mathrm{Hom}_{\mathbb{K}}(\mathbb{K}^{\alpha_{s(h)}}, \mathbb{K}^{\beta_{t(h)}})$. Applying the Frobenius map $F$,  we have the following long exact sequence (see \cite{CB})
$$
\xymatrix{0\ar[r]&\mathrm{Hom}_{kQ}(M, N)\ar[r]&\oplus_{i\in Q_0}\mathrm{Hom}_{k}(k^{\alpha_i}, k^{\beta_i})\ar[r]&}
$$
$$
\xymatrix{\ar[r]&D^F(\alpha, \beta)\ar[r]^-{\pi}&\mathrm{Ext}_{kQ}^1(M, N)\ar[r]&0.}
$$
We denote by $D^F(\alpha, \beta)_{L}$ the inverse image of $\mathrm{Ext}^1_{kQ}(M, N)_L$ under the map $\pi.$ Then $D_{L}^{MN}=|D^F(\alpha, \beta)_L|=q^{\sum_{i\in Q_0}\alpha_i\beta_i}h_{L}^{MN}.$
By definition, $\tilde{\delta}_{\alpha,\beta}(1_{\mathcal{O}^F_L})(M, N)=|D^F(\alpha, \beta)_L|=D_{L}^{MN}.$ This completes the proof.
\end{proof}

In order to compare these with Lusztig's construction, we consider the subalgebra of $\mathcal{CF}^F(Q)$ generated by $1_{S_i}=1_{\mathcal{O}^F_{S_i}}$ for all $i\in Q_0$, denoted by $\mathcal{F}^F(Q).$ The subalgebra $\mathcal{F}^F(Q)$ has the following decomposition of weight spaces: $$\mathcal{F}^F(Q)=\bigoplus_{\alpha }\mathcal{F}^F_{\alpha }.$$

\begin{lemma}\label{coincide}
Give a sequence $\mathbf{i}=(i_1, i_2, \cdots, i_m)$ in $Q_0$  such that $i_j\neq i_k$ for $j\neq k\in \{1, 2, \cdots, m\}$ and let $f=1_{S_{i_1}}*1_{S_{i_2}}*\cdots *1_{S_{i_m}}\in \mathcal{CF}^F(Q)$. Then $\tilde{\delta}(f)=\delta(f).$
\end{lemma}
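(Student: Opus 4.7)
The plan is to unwind the definitions and observe that the twist factor relating $\tilde{\delta}$ and $\delta$ is trivial on every piece where $f$ is supported. Recall from the construction that $\delta_{\alpha,\beta}=q^{-\sum_{i\in Q_0}\alpha_i\beta_i}\tilde{\delta}_{\alpha,\beta}$, so it suffices to show that whenever $\tilde{\delta}_{\alpha,\beta}(f)\neq 0$, the exponent $\sum_{i\in Q_0}\alpha_i\beta_i$ vanishes.

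First I would locate the weight space in which $f$ lives. Since the induction map $\underline{m}_{\alpha,\beta}$ is graded (its $r$-fold version sends $\mathcal{CF}^F_{\alpha_1}\otimes\cdots\otimes\mathcal{CF}^F_{\alpha_r}$ into $\mathcal{CF}^F_{\alpha_1+\cdots+\alpha_r}$) and since each $1_{S_{i_j}}$ lies in $\mathcal{CF}^F_{i_j}$, the element $f$ lies in $\mathcal{CF}^F_{\gamma}$ for $\gamma=\sum_{j=1}^{m}i_j$. The distinctness hypothesis $i_j\neq i_k$ forces $\gamma_i\in\{0,1\}$ for every $i\in Q_0$.

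Now take any decomposition $\gamma=\alpha+\beta$ with $\alpha,\beta\in\mathbb{N}Q_0$. Because each $\gamma_i\in\{0,1\}$ and $\alpha_i,\beta_i\geq 0$, at most one of $\alpha_i,\beta_i$ can be nonzero at each vertex $i$, so $\alpha_i\beta_i=0$ for all $i\in Q_0$ and hence $\sum_{i\in Q_0}\alpha_i\beta_i=0$. Therefore $\delta_{\alpha,\beta}(f)=\tilde{\delta}_{\alpha,\beta}(f)$ for every pair $(\alpha,\beta)$ with $\alpha+\beta=\gamma$, and summing over all such decompositions yields $\delta(f)=\tilde{\delta}(f)$.

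There is no serious obstacle here: the whole content is the observation that the Euler-type twist $q^{\sum_i\alpha_i\beta_i}$ introduced when passing from $\tilde{\delta}$ to $\delta$ is forced to be $1$ on dimension vectors supported at distinct vertices. The only thing to be careful about is to confirm that the grading of $f$ is exactly $\sum_j i_j$, which follows immediately from the definition of the induction map via the diagram $\mathbb{E}^F_\alpha\times\mathbb{E}^F_\beta\leftarrow \mathbb{E}'^F\to\mathbb{E}''^F\to\mathbb{E}^F_{\alpha+\beta}$.
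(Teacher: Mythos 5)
The paper states Lemma~\ref{coincide} without giving a proof, so there is no ``paper's argument'' to compare against. Your proof is correct and is precisely the natural argument: since $1_{S_{i_j}}\in\mathcal{CF}^F_{e_{i_j}}$ (the unit dimension vector at vertex $i_j$) and $\underline{m}$ is graded, $f\in\mathcal{CF}^F_{\gamma}$ with $\gamma=\sum_j e_{i_j}$; the distinctness of the $i_j$ forces $\gamma_i\in\{0,1\}$, so every decomposition $\gamma=\alpha+\beta$ satisfies $\alpha_i\beta_i=0$ for all $i$, whence the twist $q^{-\sum_i\alpha_i\beta_i}$ is identically $1$ on the support of $\tilde\delta(f)$ and $\delta(f)=\tilde\delta(f)$. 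Nothing is missing, and the one point that deserves the care you gave it (that the grading of $f$ is exactly $\sum_j e_{i_j}$) is correctly justified from the definition of the induction map.
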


The Riedtmann-Peng formula can be reformulated to the following form, which generalizes \cite[Lemma 1.13]{Lusztig1998} from $\mathcal{F}^F(Q)$ to $\mathcal{CF}^F(Q).$
\begin{Prop}\label{RP}
Let $f_i\in \mathcal{CF}^F_{\alpha _i}$ for $i=1, 2$ and $g\in \mathcal{CF}^F_{\alpha }$ for $\alpha =\alpha _1+\alpha _2.$ Then
$$
|G^F_{\alpha }|\sum_{x, y}f_1(x)f_2(y)\delta_{\alpha _1,\alpha _2}(g)(x,y)=|G^F_{\alpha _1}\times G^F_{\alpha _2}|\sum_{z}f_1*f_2(z)g(z)
$$
where $x\in \mathbb{E}^F_{\alpha _1}, y\in \mathbb{E}^F_{\alpha _2}$ and $z\in \mathbb{E}^F_{\alpha _1}.$
\end{Prop}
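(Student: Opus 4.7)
The plan is to reduce the identity to the Riedtmann--Peng formula $F_{MN}^L a_M a_N = h_L^{MN} a_L$ by testing on characteristic functions of $G^F$-orbits, where everything becomes purely combinatorial.

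First, both sides of the claimed equality are trilinear in $(f_1, f_2, g)$, and $\mathcal{CF}^F_\alpha$ is spanned by the indicator functions $1_{\mathcal{O}_M^F}$ as $M$ ranges over isomorphism classes of $kQ$-modules of dimension vector $\alpha$ (using that Noether--Deuring identifies $G_\alpha^F$-orbits in $\mathbb{E}_\alpha^F$ with such isomorphism classes, so $\mathcal{O}_M^F$ really is the $G_{\alpha}^F$-orbit of $M$). Hence it suffices to verify the identity with $f_1 = 1_{\mathcal{O}_M^F}$, $f_2 = 1_{\mathcal{O}_N^F}$, and $g = 1_{\mathcal{O}_L^F}$, where $\underline{\mathrm{dim}}\,M = \alpha_1$, $\underline{\mathrm{dim}}\,N = \alpha_2$, and $\underline{\mathrm{dim}}\,L = \alpha$.

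Next I will evaluate both sides using Lemmas~\ref{multiplication} and~\ref{comultiplication}. Since $\delta_{\alpha_1,\alpha_2}(g)$ is $G_{\alpha_1}^F \times G_{\alpha_2}^F$-invariant (by the equivariance of $\kappa$ and $i$), Lemma~\ref{comultiplication} shows that it takes the constant value $h_L^{MN}$ on $\mathcal{O}_M^F \times \mathcal{O}_N^F$ and vanishes elsewhere. Combined with the orbit--stabilizer identity $|\mathcal{O}_X^F| = |G^F_{\underline{\mathrm{dim}}\,X}|/a_X$, the left side becomes
\[
|G_\alpha^F| \cdot \frac{|G_{\alpha_1}^F|}{a_M} \cdot \frac{|G_{\alpha_2}^F|}{a_N} \cdot h_L^{MN}.
\]
By Lemma~\ref{multiplication}, $f_1 * f_2$ takes the value $F_{MN}^L$ on $\mathcal{O}_L^F$ and vanishes on other orbits, so the right side becomes
\[
|G_{\alpha_1}^F| \cdot |G_{\alpha_2}^F| \cdot F_{MN}^L \cdot \frac{|G_\alpha^F|}{a_L}.
\]

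Cancelling the common factor $|G_\alpha^F|\,|G_{\alpha_1}^F|\,|G_{\alpha_2}^F|$ reduces the required equality to $h_L^{MN}/(a_M a_N) = F_{MN}^L/a_L$, which is precisely the Riedtmann--Peng formula recalled in Section~2. The main obstacle is purely bookkeeping: one must confirm that the twist factor $q^{-\sum_{i\in Q_0}\alpha_{1i}\alpha_{2i}}$ separating $\delta$ from $\tilde{\delta}$ has already been absorbed into Lemma~\ref{comultiplication} (so that no compensating power of $q$ appears on the left), and that the identification of $\mathcal{O}_M^F$ with a single $G_{\alpha_1}^F$-orbit of size $|G_{\alpha_1}^F|/a_M$ is legitimate. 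Once these normalizations are aligned, the proposition is simply the function-theoretic incarnation of Riedtmann--Peng.
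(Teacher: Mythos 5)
Your proof is correct and is essentially the same as the paper's: reduce by linearity to indicator functions $1_{\mathcal{O}^F_M}, 1_{\mathcal{O}^F_N}, 1_{\mathcal{O}^F_L}$, evaluate both sides via Lemmas~\ref{multiplication} and~\ref{comultiplication}, and finish with the orbit--stabilizer identity $|\mathcal{O}^F_X| = |G^F_{\underline{\mathrm{dim}}\,X}|/a_X$ together with the Riedtmann--Peng formula. The only cosmetic difference is that you substitute the orbit sizes explicitly throughout, whereas the paper leaves $|\mathcal{O}^F_M|$ and $|\mathcal{O}^F_N|$ unexpanded and only invokes the orbit--stabilizer identity for $L$.
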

\begin{proof}
Given a dimension vector $\alpha $, take $f\in \mathcal{CF}_{G_{\alpha}}(\mathbb{E}^F_{\alpha })$, then $f=\sum_{i=1}^sa_i 1_{\mathcal{O}^F_{M_i}}$ for some $a_i\in \overline{\mathbb{Q}}_l$, $s\in \mathbb{Z}$ and $kQ$-modules $M_1, \cdots, M_s.$  Without loss of generality, we may assume that $f_1=1_{\mathcal{O}^F_M}$, $f_2=1_{\mathcal{O}^F_N}$ and $g=1_{\mathcal{O}^F_L}$ for some $kQ$-modules $M, N$ and $L.$ Following Lemma \ref{multiplication} and \ref{comultiplication}, the left side of the equation is equal to
$$
|G^F_{\alpha }|\cdot|\mathcal{O}^F_M|\cdot|\mathcal{O}^F_N|\cdot h_{MN}^L
$$
and the right side of the equation is equal to
$$
|G^F_{\alpha _1}|\cdot|G^F_{\alpha _2}|\cdot|\mathcal{O}^F_L|\cdot F_{MN}^L.
$$
Using $a_L=|G^F_{\alpha }|/|\mathcal{O}^F_L|$ and the Riedtmann-Peng formula, we prove the proposition.
\end{proof}

By definition, $\mathrm{dim}_kG^F_{\alpha }=\sum_{i\in Q_0}\alpha^2_i$ and then we obtain the following lemma (\cite[Section 1.2]{Schiffmann2}).
\begin{lemma}
With the above notations, we have $$\frac{1}{2}(\mathrm{dim}_kG^F_{\alpha }-\mathrm{dim}_k G^F_{\alpha _1}-\mathrm{dim}_k G^F_{\alpha _2})=\sum_{i\in Q_0}(\alpha _1)_i(\alpha _2)_i.$$
\end{lemma}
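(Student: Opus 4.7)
The plan is to unpack the definition of $\dim_k G^F_\alpha$ and then perform a direct elementary computation. By the very definition given just before the lemma, $G^F_\alpha = \prod_{i \in Q_0} GL(\alpha_i, \mathbb{F}_q)$, and the paper has already noted that $\dim_k G^F_\alpha = \sum_{i \in Q_0} \alpha_i^2$ (each factor $GL(\alpha_i)$ contributes $\alpha_i^2$). So the proof reduces to a purely numerical identity about dimension vectors.

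First I would write, for each $i \in Q_0$, $\alpha_i = (\alpha_1)_i + (\alpha_2)_i$ since $\alpha = \alpha_1 + \alpha_2$. Then expanding the square,
\[
\alpha_i^2 - (\alpha_1)_i^2 - (\alpha_2)_i^2 = \bigl((\alpha_1)_i + (\alpha_2)_i\bigr)^2 - (\alpha_1)_i^2 - (\alpha_2)_i^2 = 2(\alpha_1)_i(\alpha_2)_i.
\]
Summing over $i \in Q_0$ and dividing by $2$ gives exactly the claimed equality.

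There is essentially no obstacle: the content of the lemma is just the binomial identity $(a+b)^2 - a^2 - b^2 = 2ab$ applied componentwise, coupled with the standard fact $\dim GL(n) = n^2$. The only thing worth flagging for the reader is why $\dim_k G^F_\alpha$ should equal $\sum_i \alpha_i^2$ (as a dimension over $k = \mathbb{F}_q$ of the underlying variety, not the $\mathbb{F}_q$-dimension of the finite group, which would be $0$); this is the conventional meaning consistent with \cite[Section 1.2]{Schiffmann2} cited in the statement. The lemma's role downstream is to identify the exponent $\sum_i (\alpha_1)_i (\alpha_2)_i$ appearing in the twist $\delta_{\alpha,\beta} = q^{-\sum_i \alpha_i \beta_i}\tilde{\delta}_{\alpha,\beta}$ with half the dimension discrepancy, which is the form most convenient for the sheaf-theoretic lift in the next section.
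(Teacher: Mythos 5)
Your proof is correct and matches the paper's (essentially unproved) assertion: the paper simply notes $\dim_k G^F_\alpha = \sum_{i\in Q_0}\alpha_i^2$ and states the lemma follows, and you have supplied the intended one-line computation $(a+b)^2 - a^2 - b^2 = 2ab$ applied componentwise. Nothing more is needed.
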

Hence, the equation in Proposition \ref{RP} can also be written as
$$
\frac{|G^F_{\alpha }|}{|\mathfrak{g}^F_{\alpha }|}\cdot\sum_{x, y}f_1(x)f_2(y)\tilde{\delta}_{\alpha _1,\alpha _2}(g)(x,y)=\frac{|G^F_{\alpha _1}|}{|\mathfrak{g}^F_{\alpha _1}|}\frac{|G^F_{\alpha _2}|}{|\mathfrak{g}^F_{\alpha _2}|}\sum_{z}f_1*f_2(z)g(z)
$$
by substituting $\widetilde{\delta}$ for $\delta$, where $\mathfrak{g}^F_{\alpha }$, $\mathfrak{g}^F_{\alpha _1}$ and $\mathfrak{g}^F_{\alpha _2}$ are the Lie algebras of $G^F_{\alpha }$, $G^F_{\alpha _1}$ and $G^F_{\alpha _2}$, respectively.

Then, we shall consider the relation between the induction map and the restriction map.

Fix dimension vectors $\alpha, \beta, \alpha', \beta'$ with $\alpha+\beta=\alpha'+\beta'=\gamma $. Let $\mathcal{N}$ be the set of quadruples $\lambda=(\alpha_1, \alpha_2, \beta_1, \beta_2)$ of dimension vectors such that $\alpha=\alpha_1+\alpha_2, \beta=\beta_1+\beta_2, \alpha'=\alpha_1+\beta_1$ and $\beta'=\alpha_2+\beta_2$. Consider the following diagram
\begin{equation}\label{commutative_diagram1}
\xymatrix{\mathbb{E}^F_{\alpha}\times \mathbb{E}^F_{\beta}&\mathbb{E}'^F_{\alpha,\beta}\ar[l]_-{p_1}\ar[r]^{p_2}&\mathbb{E}^{''F}_{\alpha,\beta}\ar[r]^{p_3}&\mathbb{E}^F_{\gamma }\\
\coprod_{\lambda\in \mathcal{N}}F^F_{\lambda}\ar[u]^{i'}\ar[d]_{\kappa'} &&&F^F_{\alpha',\beta'}\ar[d]_{\kappa}\ar[u]^{i}\\
\coprod_{\lambda\in \mathcal{N}}E^F(\lambda)&\coprod_{\lambda\in \mathcal{N}}\mathbb{E}'^F_{\lambda}\ar[l]_-{p'_1}\ar[r]^{p'_2}
&\coprod_{\lambda\in \mathcal{N}}\mathbb{E}^{''F}(\lambda)\ar[r]^{p'_3}&\mathbb{E}^F_{\alpha'}\times \mathbb{E}^F_{\beta'}}
\end{equation}
where $\mathbb{E}^F(\lambda)=\mathbb{E}^F_{\alpha_1}\times\mathbb{E}^F_{\alpha_2}\times\mathbb{E}^F_{\beta_1}\times\mathbb{E}^F_{\beta_2}$, $\mathbb{E}'^F(\lambda)=\mathbb{E}'^F_{\alpha_1,\beta_1}\times\mathbb{E}'^F_{\alpha_2,\beta_2}$ and $\mathbb{E}''^{F}(\lambda)=\mathbb{E}''^{F}_{\alpha_1,\beta_1}\times\mathbb{E}''^{F}_{\alpha_2,\beta_2}$ for $\lambda=(\alpha_1, \alpha_2, \beta_1, \beta_2).$ This induces the maps between $F$-fixed subsets and then the maps between vector spaces of functions as follows:
\begin{equation}\label{commutative_diagram}\xymatrix{\mathcal{CF}^{F}_{\alpha}\times \mathcal{CF}^F_{\beta}\ar[r]^{\underline{m}_{\alpha,\beta}}\ar[d]^{\delta}&\mathcal{CF}^F_{\gamma }\ar[d]^{\delta_{\alpha',\beta'}}\\\mathcal{CF}^F(\prod_{\lambda\in \mathcal{N}}\mathbb{E}_{\lambda})\ar[r]^{\underline{m}}&\mathcal{CF}^{F}_{\alpha'}\times \mathcal{CF}^F_{\beta'}.}
\end{equation}

Consider the top and right of Diagram (\ref{commutative_diagram1}).
Set
$$
C_{\alpha,\beta, \alpha',\beta'}^{'F}=\{(x, W, \rho_1, \rho_2)\in \mathbb{E}'^F_{\alpha,\beta}\mid x\in F^F_{\alpha',\beta'}\}$$
and
$$C_{\alpha,\beta, \alpha',\beta'}^{''F}=\{(x, W)\in \mathbb{E}^{''F}_{\alpha,\beta}\mid x\in F^F_{\alpha',\beta'}\}.
$$
The sets can be illustrated by the following diagram:
$$
\xymatrix{&W'\ar[d]&\\W\ar[r]&(\mathbb{K}^{\gamma }, x)\ar[r]\ar[d]&(\mathbb{K}^{\gamma }, x)/W\\
&(\mathbb{K}^{\gamma }, x)/W'&}
$$
where $(x, W')\in \mathbb{E}''^{F}_{\alpha', \beta'}.$
Consider the following diagram
$$
\xymatrix{\mathbb{E}^F_{\alpha}\times \mathbb{E}^F_{\beta}&C_{\alpha,\beta, \alpha',\beta'}^{'F}\ar[l]_-{p}\ar[r]^{q}&C_{\alpha,\beta, \alpha',\beta'}^{''F}\ar[r]^{r}&F^F_{\alpha',\beta'}\ar[r]^-{\kappa}&\mathbb{E}^F_{\alpha'}\times \mathbb{E}^F_{\beta'}}.
$$
Then by definition, we have
\begin{equation}\label{delta_m}
\delta_{\alpha', \beta'}\underline{m}_{\alpha, \beta}=|G^F_{\alpha}\times G^F_{\beta}|^{-1}q^{-\sum_{i\in Q_0\alpha'_i\beta'_i}}(\kappa)_{!}(r)_{!}(q)_{!}p^*.
\end{equation}

Consider the left and bottom of Diagram (\ref{commutative_diagram1}). Set
$$
S_{\lambda}^{'F}=\{(x_{\alpha}, x_{\beta}, x_{\alpha'}, x_{\beta'}, W_1, W_2, \rho_{11}, \rho_{12}, \rho_{21}, \rho_{22})\mid (x_{\alpha'}, W_1, \rho_{11}, \rho_{12})\in \mathbb{E}^{'F}_{\alpha_2,\beta_2},$$$$(x_{\beta'}, W_2, \rho_{21}, \rho_{22})\in \mathbb{E}^{'F}_{\alpha_1,\beta_1}, (x_{\beta}, W_2)\in \mathbb{E}''^{F}_{\beta_1,\beta_2}, (\mathbb{K}^{\beta}, x_{\beta})/W_2\cong (W_1, x_{\alpha'\mid_{W_1}}),$$$$\exists W_3, (x_{\alpha}, W_3)\cong (\mathbb{K}^{\beta'}, x_{\beta'})/W_2, (\mathbb{K}^{\alpha}, x_{\alpha})/W_3\cong (\mathbb{K}^{\alpha'}, x_{\alpha'})/W_1
\}
$$
where $\lambda=(\alpha_1, \alpha_2, \beta_1, \beta_2)$ and $W_1, W_2$ and $W_3$ are graded  vector spaces of dimension vectors $\beta_2, \beta_1$ and $\alpha_1$, respectively.
The set can be illustrated by the following diagram:
$$
\xymatrix{W_2\ar[r]\ar[d]&(\mathbb{K}^{\beta'}, x_{\beta'})\ar[r]&W_3\ar[d]\\
(\mathbb{K}^{\beta}, x_{\beta})\ar[d]&&(\mathbb{K}^{\alpha}, x_{\alpha})\ar[d]\\
W_1\ar[r]&(\mathbb{K}^{\alpha'}, x_{\alpha'})\ar[r]&(\mathbb{K}^{\alpha}, x_{\alpha})/W_3\cong (\mathbb{K}^{\alpha'}, x_{\alpha'})/W_1.}
$$
Set
$$
S_{\lambda}^{''F}=\{(x_{\alpha}, x_{\beta}, x_{\alpha'}, x_{\beta'}, W_1, W_2)\mid \exists  \rho_{11}, \rho_{12}, \rho_{21}, \rho_{22}, $$$$(x_{\alpha}, x_{\beta}, x_{\alpha'}, x_{\beta'}, W_1, W_2, \rho_{11}, \rho_{12}, \rho_{21}, \rho_{22})\in S_{\lambda}^{'F}\}.
$$
Then there is a projection $S_{\lambda}^{'F}\rightarrow S_{\lambda}^{''F}$ which is a principal $G_{\alpha_1}\times G_{\alpha_2}\times G_{\beta_1}\times G_{\beta_2}$-bundle.
We also have the following diagram
$$
\xymatrix{\mathbb{E}^F_{\alpha}\times \mathbb{E}^F_{\beta}&\coprod_{\lambda\in \mathcal{N}}F^F_{\lambda}\ar[l]_-{i'}&\coprod_{\lambda\in \mathcal{N}}S_{\lambda}^{'F}\ar[l]_-{p'}\ar[r]^-{q'}&\coprod_{\lambda\in \mathcal{N}}S_{\lambda}^{''F}\ar[r]^-{r'}&\mathbb{E}^F_{\alpha'}\times \mathbb{E}^F_{\beta'}}.
$$
Then we have
\begin{equation}\label{m_delta}
\underline{m}\delta=|G^F_{\alpha_1}\times G^F_{\alpha_2}\times G^F_{\beta_1}\times G^F_{\beta2}|^{-1}q^{-\langle \alpha_2, \beta_1\rangle-\sum_{i\in Q_0}[(\beta_1)_i(\beta_2)_i+(\alpha_1)_i(\alpha_2)_i]}(r')_{!}(q')_{!}(p')^*(i')^*.
\end{equation}

By (\ref{delta_m}) and (\ref{m_delta}), Diagram (\ref{commutative_diagram1}) can be rewrote as:
$$
\xymatrix{\mathbb{E}^F_{\alpha}\times \mathbb{E}^F_{\beta}& C'^F\ar[l]-_p\ar[d]^{\kappa r q}\\
\coprod_{\lambda\in \mathcal{N}}S'^F_{\lambda}\ar[u]^{p'i'}\ar[r]^-{r'q'}&\mathbb{E}^F_{\alpha'}\times \mathbb{E}^F_{\beta'}.}
$$

By Lemma \ref{multiplication} and \ref{comultiplication}, one can check the following two lemmas directly.
\begin{lemma}\label{lemma:delta_m}
For $M\in \mathbb{E}^F_{\alpha}, N\in \mathbb{E}^F_{\beta}, M'\in \mathbb{E}^F_{\alpha'}, N'\in \mathbb{E}^F_{\beta'}$, we have
$$
\delta_{\alpha', \beta'}\underline{m}_{\alpha, \beta}(1_{\mo^F_M}, 1_{\mo^F_N})(M', N')=\sum_{[L]\in \mathbb{E}^F_{\alpha }/G^F_{\alpha }}F^L_{MN}h_{L}^{M'N'}.
$$
\end{lemma}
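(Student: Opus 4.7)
The plan is to reduce the statement to Lemma \ref{multiplication} and Lemma \ref{comultiplication}, using only $\overline{\mathbb{Q}}_l$-linearity of the restriction map $\delta_{\alpha',\beta'}$. The key observation is that the inner quantity $\underline{m}_{\alpha,\beta}(1_{\mathcal{O}^F_M},1_{\mathcal{O}^F_N})$ is a $G^F_\gamma$-invariant function on $\mathbb{E}^F_\gamma$, where $\gamma=\alpha+\beta$, so it admits a unique expansion
$$\underline{m}_{\alpha,\beta}(1_{\mathcal{O}^F_M},1_{\mathcal{O}^F_N}) \;=\; \sum_{[L]\in \mathbb{E}^F_\gamma/G^F_\gamma} c_{MN}^{L}\cdot 1_{\mathcal{O}^F_L}$$
in which the coefficient $c_{MN}^L$ is the value of the product function on any representative of $\mathcal{O}^F_L$. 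By Lemma \ref{multiplication}, this value is exactly $F^L_{MN}$.

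Second, I would apply $\delta_{\alpha',\beta'}$ term by term, using its linearity, to get
$$\delta_{\alpha',\beta'}\underline{m}_{\alpha,\beta}(1_{\mathcal{O}^F_M},1_{\mathcal{O}^F_N}) \;=\; \sum_{[L]} F^L_{MN}\,\delta_{\alpha',\beta'}(1_{\mathcal{O}^F_L}).$$
Evaluating both sides at $(M',N')\in \mathbb{E}^F_{\alpha'}\times\mathbb{E}^F_{\beta'}$ and invoking Lemma \ref{comultiplication}, which identifies $\delta_{\alpha',\beta'}(1_{\mathcal{O}^F_L})(M',N')$ with $h^{M'N'}_L$, yields exactly the claimed formula. (The sum in the statement should be read as running over isomorphism classes of $kQ$-modules $L$ of dimension vector $\gamma=\alpha+\beta$; the geometric realization of these classes is the orbit set of $\mathbb{E}^F_\gamma$.)

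Accordingly there is no substantive obstacle: the lemma is purely a bookkeeping identity that records what happens when the restriction functor is applied after the induction functor on a basis element $1_{\mathcal{O}^F_M}\otimes 1_{\mathcal{O}^F_N}$. The only thing one must be careful about is that the twist factor $q^{-\sum_{i\in Q_0}\alpha'_i\beta'_i}$ hidden in $\delta_{\alpha',\beta'}=q^{-\sum_{i}\alpha'_i\beta'_i}\tilde{\delta}_{\alpha',\beta'}$ matches the twist used in defining $h^{M'N'}_L$ in Lemma \ref{comultiplication}, but this is precisely how the comultiplication was normalized in the previous section. Thus Lemma \ref{lemma:delta_m} follows with no further computation.
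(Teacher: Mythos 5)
Your proposal is correct and follows exactly the route the paper intends: the paper states that Lemmas \ref{multiplication} and \ref{comultiplication} yield the identity directly, and your argument (expand $\underline{m}_{\alpha,\beta}(1_{\mathcal{O}^F_M},1_{\mathcal{O}^F_N})$ in the basis of orbit characteristic functions using Lemma \ref{multiplication}, then apply $\delta_{\alpha',\beta'}$ by linearity and evaluate via Lemma \ref{comultiplication}) is precisely the intended bookkeeping. You also correctly flag that the index in the displayed sum should run over $\mathbb{E}^F_{\alpha+\beta}/G^F_{\alpha+\beta}$ rather than $\mathbb{E}^F_\alpha/G^F_\alpha$ as written, which is a typo in the paper.
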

\begin{lemma}\label{lemma:m_delta}
For $M\in \mathbb{E}^F_{\alpha}, N\in \mathbb{E}^F_{\beta}, M'\in \mathbb{E}^F_{\alpha'}, N'\in \mathbb{E}^F_{\beta'}$, we have
$$
\underline{m}\delta(1_{\mo^F_M}, 1_{\mo^F_N})(M', N')=\sum_{[X],[Y_1],[Y_2],[Z]} q^{-\langle\mathrm{\underline{\dim}}X,\mathrm{\underline{\dim}}Z\rangle}F^{M'}_{XY_2}F^{N'}_{Y_1Z}h_{M}^{XY_1}h_{N}^{Y_2Z},
$$
where $[X]\in \mathbb{E}^F_{\alpha_2}/G^F_{\alpha_2},[Y_1]\in \mathbb{E}^F_{\alpha_1}/G^F_{\alpha_1},[Y_2]\in \mathbb{E}^F_{\beta_2}/G^F_{\beta_2},[Z]\in \mathbb{E}^F_{\beta_1}/G^F_{\beta_1}.$
\end{lemma}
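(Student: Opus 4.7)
My plan is to evaluate the geometric factorisation of $\underline{m}\delta$ given in \eqref{m_delta} directly on $1_{\mo^F_M} \otimes 1_{\mo^F_N}$ at the point $(M', N')$, decomposing the fibre of the composite $(r')_!(q')_!(p')^*(i')^*$ according to the quadruple of isomorphism classes $([X], [Y_1], [Y_2], [Z])$ of the intermediate modules appearing in the diagram defining $S^{'F}_\lambda$. Under this decomposition, a point of the fibre records (reading off the diagram) a submodule $W_3 \cong Y_1 \subset M$ with $M/W_3 \cong X$, a submodule $W_2 \cong Z \subset N$ with $N/W_2 \cong Y_2$, and compatible realisations of $Y_2 \hookrightarrow M'$ with quotient $X$ and $Z \hookrightarrow N'$ with quotient $Y_1$.

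I would first apply Lemma \ref{comultiplication} to $(p')^*(i')^*$: the pullback, summed over submodule choices at a fixed quadruple, contributes $h_M^{XY_1} h_N^{Y_2Z}$ once the twists $q^{-\sum_i (\alpha_1)_i(\alpha_2)_i}$ and $q^{-\sum_i (\beta_1)_i(\beta_2)_i}$ in \eqref{m_delta} are absorbed as the conversion from $\tilde\delta$ to $\delta$. Then I would apply Lemma \ref{multiplication} to the pushforward $(r')_!(q')_!$: over $(M', N')$ this counts compatible extensions $0 \to Y_2 \to M' \to X \to 0$ and $0 \to Z \to N' \to Y_1 \to 0$, producing the factor $F^{M'}_{XY_2} F^{N'}_{Y_1Z}$. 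The normalising prefactor $|G^F_{\alpha_1} \times G^F_{\alpha_2} \times G^F_{\beta_1} \times G^F_{\beta_2}|^{-1}$ cancels the stabiliser cardinalities that arise when passing from a sum over $\mathbb{E}^F(\lambda)$ to a sum over isomorphism classes $[X],[Y_1],[Y_2],[Z]$, using the Riedtmann-Peng formula. The residual piece of the $q$-prefactor is $q^{-\langle \alpha_2, \beta_1 \rangle} = q^{-\langle \underline{\dim} X, \underline{\dim} Z \rangle}$, which is the claimed twist.

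The principal obstacle is the combinatorial bookkeeping: one must verify that the fibre of $S^{'F}_\lambda$ over $(M', N') \in \mo^F_{M'} \times \mo^F_{N'}$ truly separates into four independent Hall- and Ext-counts indexed by $([X], [Y_1], [Y_2], [Z])$, that the Riedtmann-Peng normalisation absorbs all $a_X, a_{Y_1}, a_{Y_2}, a_Z$ factors cleanly, and that the submodule-versus-quotient role of each of $X, Y_1, Y_2, Z$ in the four Hall numbers is read off consistently from the diagrams for $S^{'F}_\lambda$ and $S^{''F}_\lambda$. Once the geometric picture is fixed, the identification is a direct though finicky verification in the spirit of Lemma \ref{lemma:delta_m}.
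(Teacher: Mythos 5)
Your plan matches the route the paper implicitly prescribes: the paper gives no argument beyond the remark that the two lemmas ``can be checked directly'' from Lemma~\ref{multiplication} and Lemma~\ref{comultiplication} after the geometric factorisation \eqref{m_delta}, and your proposal is a reasonable filling-in of that hint, with the right identification of which ingredient produces the $h$-factors, which produces the $F$-factors, and how the exponent $-\langle\alpha_2,\beta_1\rangle-\sum_i[(\alpha_1)_i(\alpha_2)_i+(\beta_1)_i(\beta_2)_i]$ splits into the $\tilde\delta\to\delta$ conversions plus the residual $q^{-\langle\underline{\dim}X,\underline{\dim}Z\rangle}$ that appears in the statement.

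Two points of precision, though. First, the invocation of the Riedtmann--Peng formula is out of place here. In \eqref{m_delta} the prefactor $|G^F_{\alpha_1}\times G^F_{\alpha_2}\times G^F_{\beta_1}\times G^F_{\beta_2}|^{-1}$ exactly cancels $(q')_!$, because $q'\colon \coprod_\lambda S'^F_\lambda\to\coprod_\lambda S''^F_\lambda$ is a principal $G_{\alpha_1}\times G_{\alpha_2}\times G_{\beta_1}\times G_{\beta_2}$-bundle and $(p')^*(i')^*(1_{\mo_M^F}\otimes 1_{\mo_N^F})$ is constant on its fibres; converting the remaining fibre count into a sum over the isomorphism classes $[X],[Y_1],[Y_2],[Z]$ is pure orbit decomposition and already produces the numbers $F^{M'}_{\cdot\cdot}$, $F^{N'}_{\cdot\cdot}$, $h_M^{\cdot\cdot}$, $h_N^{\cdot\cdot}$ in their definitions. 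Riedtmann--Peng (which converts $F$'s into $h$'s and vice versa) is what the paper uses afterwards, to compare the outputs of Lemma~\ref{lemma:delta_m} and Lemma~\ref{lemma:m_delta} and recover the Green formula; it plays no role inside the present computation. Second, attaching Lemma~\ref{comultiplication} to ``$(p')^*(i')^*$'' and Lemma~\ref{multiplication} to ``$(r')_!(q')_!$'' is an oversimplification: both of those lemmas compute a full pullback-then-pushforward composite ($\kappa_!i^*$ and $(p_3)_!\tau p_1^*$ respectively), and in particular the $h$-count in Lemma~\ref{comultiplication} is produced by a proper pushforward along the vector-bundle map $\kappa$, not by a restriction. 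The cleaner way to phrase your step is: $\delta$ applied to $1_{\mo_M^F}\otimes 1_{\mo_N^F}$ gives, at each point of $\mathbb{E}^F(\lambda)$, the product $h_M^{\,\cdot\,}h_N^{\,\cdot\,}$ by Lemma~\ref{comultiplication}; then $\underline m$ applied to the resulting function gives the $F^{M'}_{\,\cdot\,}F^{N'}_{\,\cdot\,}$ factors by Lemma~\ref{multiplication}; and the extra $q^{-\langle\alpha_2,\beta_1\rangle}$ is supplied by \eqref{m_delta} as the correction that compensates the failure of base change between the naive composite and the factored form through $S'^F_\lambda$ and $S''^F_\lambda$. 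You (like the paper) are taking \eqref{m_delta} as given; that is where the nontrivial geometric content lives, and it is worth being aware that the $q^{-\langle\alpha_2,\beta_1\rangle}$ is not a bookkeeping twist but precisely the $|\mathrm{Ext}^1(X,Z)|/|\mathrm{Hom}(X,Z)|$ defect that makes Green's formula nontrivial.
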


In order to study the relation between $\underline{m}\delta$ and $\delta_{\alpha', \beta'}\underline{m}_{\alpha, \beta}$, we refer the reformulation of the proof of Green's theorem in \cite{Schiffmann1}.

First, we shall count the set of crossings with the group action. More precisely, fix $M, N, M', N'$ and consider the diagram
$$
\xymatrix{
& & 0 \ar[d] & &\\
& & N' \ar[d]^-{a'} & & \\
0 \ar[r] & N \ar[r]^-{a} & L\ar[r]^{b} \ar[d]^-{b'} & M \ar[r] & 0\\
& & M' \ar[d] & &\\
& & 0 & &}
$$
Set $$Q=\{(a, b, a', b')\mid a, b,a',b' \mbox{ as in the above crossing}\}.$$
By calculation, $|Q|=\sum_{[L]\in \mathbb{E}^F_{\alpha }/G^F_{\alpha }}F^L_{MN}h_{L}^{M'N'}|G^F_{\alpha }||G^F_{\alpha}||G^F_{\beta}|.$
Consider the natural action of $G^F_{\alpha }$ on $Q$, and the orbit space is denoted by $\widetilde{Q}.$ The fibre of the map $Q\rightarrow \widetilde{Q}$ has cardinality $\frac{|G^F_{\alpha }|}{|\mathrm{Hom}(\mathrm{Coker}b'a, \mathrm{Ker}b'a)|}.$

Next, we shall count the squares with the group action. More precisely, consider the diagram
$$
\xymatrix{ & 0 \ar[d] &&&&  0\ar[d] && \\
0 \ar[r] & Z \ar[rr]^-{e_1} \ar[dd]^-{u'} && N'\ar[rr]^-{e_2} && Y_1 \ar[r] \ar[dd]^-{x}& 0\\
&&&&&&\\
& N \ar[dd]^-{v'} && && M \ar[dd]^-{y} &\\
&&&&&&\\
0\ar[r] &  Y_2 \ar[d] \ar[rr]^-{e_3} && M' \ar[rr]^-{e_4} && X \ar[r] \ar[d] & 0\\
& 0 && && 0 &}
$$
Set $$\mo=\{(e_1,e_2,e_3,e_4,u',v',x,y)\mid \mbox{ all morphisms occur in the above diagram}\}.$$
The group $G^F_{\alpha_1}\times G^F_{\alpha_2}\times G^F_{\beta_1}\times G^F_{\beta2}$ freely acts on $\mo$ with orbit space $\widetilde{\mo}$. Note that
$$
|\widetilde{\mo}|=\sum_{[X],[Y_1],[Y_2],[Z]} F^{M'}_{XY_2}F^{N'}_{Y_1Z}h_{M}^{XY_1}h_{N}^{Y_2Z}|G^F_{\alpha}||G^F_{\beta}|.
$$

There is a canonical map $\widetilde{f}: \widetilde{Q}\rightarrow \widetilde{\mo}$. And the cardinality of the fibre of this map is $|\mathrm{Ext}^1(X, Z)|.$
Hence, we get
$$
a_{M_1}a_{M_2}a_{N_1}a_{N_2}\sum_{[L]}F_{M_1N_1}^LF_{M_2N_2}^La^{-1}_L$$$$=\sum_{[X],[Y_1],[Y_2],[Z]}\frac{|\mathrm{Ext}^1_{\mathcal{A}}(X,Z)|}{|\mathrm{Hom}_{\mathcal{A}}(X, Z)|}F_{XY_1}^{M_1}F_{XY_2}^{M_2}F_{Y_2Z}^{N_1}F_{Y_1Z}^{N_2}a_{X}a_{Y_1}a_{Y_2}a_{Z},
$$
which is the Green formula.

Applying the Riedtmann-Peng formula, we have the identity
$$
\sum_{[L]\in \mathbb{E}^F_{\alpha }/G^F_{\alpha }}F^L_{MN}h_{L}^{M'N'}=\sum_{[X],[Y_1],[Y_2],[Z]} q^{-\langle\mathrm{\underline{\dim}}X,\mathrm{\underline{\dim}}Z\rangle}
F^{M'}_{XY_2}F^{N'}_{Y_1Z}h_{M}^{XY_1}h_{N}^{Y_2Z}.
$$

The right side of the identity in Lemma \ref{lemma:m_delta} is the left side of this identity
and the right side of the identity in Lemma \ref{lemma:delta_m} is the right side of this identity.
Hence, we have the following theorem.

\begin{theorem}\label{green-geometry}
With the above notations, Diagram (\ref{commutative_diagram}) is commutative, i.e., $$\delta_{\alpha', \beta'}\underline{m}_{\alpha, \beta}=\underline{m}\delta.$$
\end{theorem}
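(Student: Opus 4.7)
The strategy is to verify the operator identity by testing it on a spanning set of $\mathcal{CF}^F_{\alpha} \otimes \mathcal{CF}^F_{\beta}$ and then evaluating the resulting functions on $\mathbb{E}^F_{\alpha'} \times \mathbb{E}^F_{\beta'}$ at arbitrary points. Since both sides are $\overline{\mathbb{Q}}_l$-linear and since the characteristic functions $\{1_{\mathcal{O}^F_M}\otimes 1_{\mathcal{O}^F_N}\}$, as $M$ and $N$ range over isomorphism classes of $kQ$-modules of the appropriate dimension vectors, form a basis of $\mathcal{CF}^F_{\alpha}\otimes\mathcal{CF}^F_{\beta}$, it is enough to check the identity on input $1_{\mathcal{O}^F_M}\otimes 1_{\mathcal{O}^F_N}$. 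Similarly, since an element of $\mathcal{CF}^F_{\alpha'}\otimes \mathcal{CF}^F_{\beta'}$ is determined by its values at arbitrary pairs $(M',N')$, I would evaluate both sides at such a pair.

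The next step is to apply the two lemmas immediately preceding the theorem. Lemma \ref{lemma:delta_m} identifies
\[
\delta_{\alpha',\beta'}\underline{m}_{\alpha,\beta}(1_{\mathcal{O}^F_M}\otimes 1_{\mathcal{O}^F_N})(M',N') \;=\; \sum_{[L]\in \mathbb{E}^F_{\alpha}/G^F_{\alpha}} F^L_{MN}\,h_L^{M'N'},
\]
while Lemma \ref{lemma:m_delta} identifies
\[
\underline{m}\delta(1_{\mathcal{O}^F_M}\otimes 1_{\mathcal{O}^F_N})(M',N') \;=\; \sum_{[X],[Y_1],[Y_2],[Z]} q^{-\langle\underline{\dim}X,\underline{\dim}Z\rangle}\, F^{M'}_{XY_2}F^{N'}_{Y_1Z}h_M^{XY_1}h_N^{Y_2Z}.
\]
So the operator identity on basis vectors collapses to the numerical identity between these two sums.

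The hard part, which is in fact the content of the argument, is the identity
\[
\sum_{[L]} F^L_{MN}\,h_L^{M'N'} \;=\; \sum_{[X],[Y_1],[Y_2],[Z]} q^{-\langle\underline{\dim}X,\underline{\dim}Z\rangle}\, F^{M'}_{XY_2}F^{N'}_{Y_1Z}h_M^{XY_1}h_N^{Y_2Z}.
\]
This is exactly the form of the Green formula that was extracted in the excerpt above via the double counting through the sets $Q$ (crossings) and $\mathcal{O}$ (squares) of short exact sequences, together with the Riedtmann--Peng formula converting the $a$-factors of the classical Green formula into the homological $h^{\bullet}_{\bullet}$-coefficients. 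Invoking this identity finishes the proof. The only delicate point in writing it out is to make sure that the constants $|G^F_\alpha|$, $|G^F_\beta|$ and the twist $q^{-\sum_{i\in Q_0}\alpha'_i\beta'_i}$ on the $\delta_{\alpha',\beta'}\underline{m}_{\alpha,\beta}$ side from \eqref{delta_m} match the corresponding constants $|G^F_{\alpha_1}\times \cdots\times G^F_{\beta_2}|$ and the twist $q^{-\langle\alpha_2,\beta_1\rangle-\sum_{i\in Q_0}[(\beta_1)_i(\beta_2)_i+(\alpha_1)_i(\alpha_2)_i]}$ on the $\underline{m}\delta$ side from \eqref{m_delta}; these bookkeeping factors are already absorbed into the normalizations used when defining $\underline{m}_{\alpha,\beta}$ and $\delta_{\alpha,\beta}$ so that the stated lemmas output precisely $F^L_{MN}$ and $h_L^{M'N'}$, respectively.
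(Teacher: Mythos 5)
Your proposal is correct and takes essentially the same route as the paper: both reduce the operator identity, by linearity and orbit-invariance, to the numerical identity between the expressions in Lemmas \ref{lemma:delta_m} and \ref{lemma:m_delta}, and then establish that identity by combining the Green formula (obtained via the crossings/squares counting) with the Riedtmann--Peng formula. The only difference is that you spell out the reduction to characteristic-function inputs and evaluation at orbit representatives, which the paper treats as implicit.
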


This theorem can be viewed as the geometric analog of Green's theorem.

As in Ringel-Hall algebras, we can define the analogue $\sigma: \mathcal{CF}^F(Q)\rightarrow \mathcal{CF}^F(Q)$ of the antipode by setting
$$\sigma(f)=\sum_{r\geq 1\in \mathbb{Z}}(-1)^r\sum_{\alpha_1,\cdots,\alpha_r\neq 0}\underline{m}_{\alpha_1, \cdots,\alpha_r}^r\circ \delta_{\alpha_1,\cdots, \alpha_r}^{r}(f)$$ for $f\neq 1_0\in \mathcal{CF}^F(Q)$, where $1_0$ is the constant function on $\mathbb{E}_0$, which is the unit element in $\mathcal{CF}^{F}(Q)$.

In order to compare Lusztig's Hall algebras with twisted Ringel-Hall algebras, we twist $\mathcal{CF}^F(Q)$ by setting $\underline{m}_{\alpha, \beta}^t=v^{\langle\alpha, \beta\rangle}\underline{m}_{\alpha, \beta}$, $\delta^t_{\alpha, \beta}=v^{\langle\alpha, \beta\rangle}\delta_{\alpha,\beta}$ and $$\sigma^t(f)=\sum_{r\geq 1\in \mathbb{Z}}(-1)^r\sum_{\alpha_1,\cdots,\alpha_r\neq 0}\underline{m}_{\alpha_1,\cdots, \alpha_r}^{t,r}\circ \delta_{\alpha_1,\cdots, \alpha_r}^{t, r}(f).$$ We denote the twisted version by $\mathcal{CF}^{F,tw}(Q).$ Note that we use the same notations $\delta^t$ and $\sigma^t$ in $\mathcal{H}^{tw}(\mathcal{A})$ and $\mathcal{CF}^{F,tw}(Q)$ for convenience. With the context, it should not cause the confusion.

On the relations between $\sigma^t$ with $\underline{m}^t$ and $\delta^t$, we have
$$\sigma^t(f\cdot g)=\sigma^t(g)\cdot\sigma^t(f), \textrm{ for any $f,g\in\mathcal{CF}^{F,tw}(Q)$},$$
$$\delta^t(\sigma^t(f))=(\sigma^t\otimes\sigma^t){\delta^{t,op}}(f), \textrm{ for any $f\in\mathcal{CF}^{F,tw}(Q)$},$$
and
\begin{equation}\label{antipode_multi_comulti}
\underline{m}^t(\sigma^t\otimes 1)\delta^t(f)=\underline{m}^t(1\otimes \sigma^t)\delta^t(f)
=\left\{
\begin{array}{c}
\textrm{$0$ \,\,\,\,\,\,if $f\neq1_{0}\in\mathcal{CF}^{F,tw}(Q)$},\\
\textrm{$1_{0}$ \,\,\,\,\,\,if $f=1_{0}\in\mathcal{CF}^{F,tw}(Q)$},
\end{array}\right.
\end{equation}
where $\delta^{t,op}$ is the composition of $\delta^{t}$ with the linear map $x\otimes y\mapsto y\otimes x$.

By applying Lemma \ref{multiplication}, \ref{comultiplication} and Theorem \ref{green-geometry}, we have the following theorem.
\begin{theorem}
Let $\mathcal{A}=\mathrm{Rep}_{k}Q$. Fix an isomorphism $\iota:\overline{\mathbb{Q}}_l\rightarrow\mathbb{C}$. There is an isomorphism of algebras
\begin{eqnarray*}
\Phi: \mathcal{CF}^{F,tw}(Q)&\rightarrow&\mathcal{H}^{tw}(\mathcal{A})\\
1_{\mathcal{O}^F_M}&\mapsto&[M]
\end{eqnarray*}
satisfying $\delta^{t}\circ\Phi=(\Phi\otimes\Phi)\circ\delta^t$ and $\sigma^t\circ\Phi=\Phi\circ\sigma^t$.
\end{theorem}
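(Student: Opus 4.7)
The plan is to check first that $\Phi$ is a well-defined linear isomorphism, and then to verify multiplicativity, compatibility with the comultiplication, and compatibility with the antipode, each of which reduces to an already-stated lemma of the paper.

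Since $\mathcal{CF}^F_{\alpha}$ is the space of $G^F_{\alpha}$-invariant $\overline{\mathbb{Q}}_l$-valued functions on $\mathbb{E}^F_{\alpha}$, the characteristic functions $\{1_{\mathcal{O}^F_M}\}$, as $[M]$ runs over isomorphism classes of $kQ$-modules of dimension vector $\alpha$, form a basis of $\mathcal{CF}^F_{\alpha}$. The isomorphism classes $[M]$ of these modules likewise form a basis of the weight space $\mathcal{H}_{\alpha}^{tw}(\mathcal{A})$ after transporting coefficients via $\iota$. Thus the assignment $1_{\mathcal{O}^F_M}\mapsto [M]$ extends linearly to a weight-graded bijection $\Phi$, and it remains only to check that $\Phi$ respects the algebraic structure.

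For the multiplication, by definition of the twist we have $\underline{m}^t_{\alpha,\beta}=v_q^{\langle\alpha,\beta\rangle}\underline{m}_{\alpha,\beta}$, and by Lemma \ref{multiplication}, $1_{\mathcal{O}^F_M}*1_{\mathcal{O}^F_N}=\sum_{[L]}F_{MN}^{L}\,1_{\mathcal{O}^F_L}$. Applying $\Phi$ yields
\[
\Phi(1_{\mathcal{O}^F_M}\cdot 1_{\mathcal{O}^F_N})=v_q^{\langle\alpha,\beta\rangle}\sum_{[L]}F_{MN}^{L}[L]=[M]\cdot[N],
\]
which matches the twisted Ringel--Hall product. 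For the comultiplication, by Lemma \ref{comultiplication} together with the identification $\mathcal{CF}^F_{\alpha}\otimes\mathcal{CF}^F_{\beta}\cong\mathcal{CF}_{G^F_{\alpha}\times G^F_{\beta}}(\mathbb{E}^F_{\alpha}\times\mathbb{E}^F_{\beta})$, one has $\delta_{\alpha,\beta}(1_{\mathcal{O}^F_L})=\sum_{[M],[N]}h_{L}^{MN}\,1_{\mathcal{O}^F_M}\otimes 1_{\mathcal{O}^F_N}$, and the same twisting factor $v_q^{\langle\alpha,\beta\rangle}$ appears on both sides of $\delta^t\circ\Phi=(\Phi\otimes\Phi)\circ\delta^t$, so compatibility reduces to matching coefficients termwise.

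Finally, for the antipode, the defining formulas for $\sigma^t$ in $\mathcal{CF}^{F,tw}(Q)$ and in $\mathcal{H}^{tw}(\mathcal{A})$ are formally identical: both are written as alternating sums over $r\ge 1$ of the iterated composition $\underline{m}^{t,r}\circ\delta^{t,r}$ (respectively $\underline{m}^{t,r}\circ\delta^{t,r}$ in the Hall algebra) applied to a given basis element. Since $\Phi$ intertwines $\underline{m}^t$ and $\delta^t$ in all multifold versions by an immediate induction on the definitions of $\underline{m}^{t,r}$ and $\delta^{t,r}$, it intertwines their compositions, and hence $\sigma^t\circ\Phi=\Phi\circ\sigma^t$. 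The only delicate point, which I expect to be the main bookkeeping step rather than a genuine obstacle, is tracking the twisting exponent $v_q^{\langle\alpha,\beta\rangle}$ consistently through the iterated comultiplication to verify that it matches the exponent $v_q^{2\sum_{i\le j}\langle\alpha_i,\alpha_j\rangle}$ in the definition of $\sigma^t$ on $\mathcal{H}^{tw}(\mathcal{A})$; this is a direct expansion using the $r$-fold iterates defined earlier.
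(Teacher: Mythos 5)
Your proof is essentially correct and takes the same route as the paper, which itself only offers a one-line citation of Lemma~\ref{multiplication}, Lemma~\ref{comultiplication}, and Theorem~\ref{green-geometry}. Your expansion --- linear bijectivity from the basis of characteristic functions, then matching the twisted product via Lemma~\ref{multiplication} and the twisted coproduct via Lemma~\ref{comultiplication}, and finally intertwining the antipodes by the inductive structure of the $r$-fold maps --- is exactly what the paper has in mind.

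One remark on the bookkeeping step you flag at the end. If you actually carry out the expansion, the composition $\underline{m}^{t,r}_{\alpha_1,\dots,\alpha_r}\circ\delta^{t,r}_{\alpha_1,\dots,\alpha_r}$ yields the twist $v_q^{2\sum_{i<j}\langle\alpha_i,\alpha_j\rangle}$, not $v_q^{2\sum_{i\le j}\langle\alpha_i,\alpha_j\rangle}$ as written in the paper's definition of $\sigma^t$ on $\mathcal{H}^{tw}(\mathcal{A})$. The strict inequality is the correct one: checking the $r=1$ term against the Hopf axiom $\underline{m}^t(\sigma^t\otimes 1)\delta^t([S_i])=0$ forces $\sigma^t([S_i])=-[S_i]$, which is incompatible with the diagonal term $v_q^{2\langle\underline{\dim}S_i,\underline{\dim}S_i\rangle}$ unless one drops $i=j$ from the sum. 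So the ``delicate point'' you identified is real, but the mismatch is a typo in the paper's displayed formula rather than a defect in your argument; with $i<j$ the termwise identification goes through, and your proof is complete.
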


\section{The categorification of Ringel-Hall algebras}

Let $\mathbb{F}_q$ be a finite field with $q$ elements. In the following, $\mathbb{K}$ is an algebraic closure of $\mathbb{F}_q$.

Let $X$ be a scheme of finite type over $\mathbb{K}$. We say that $X$ has an $\mathbb{F}_q$-structure if there exists a variety $X_0$ over $\mathbb{F}_q$ such that $X=X_0\times_{Spec({\mathbb{F}_q})}Spec(\mathbb{K})$. Let $F_{X_0}: X_0\rightarrow X_0$ be the Frobenius morphism. It can be extended to the morphism $F_{X}:X\rightarrow X.$ Let $X^F$ be the set of closed points of $X$ fixed by $F$, i.e., the set of $\mathbb{F}_q$-rational points.
For any $n\in\mathbb{N}$, let $X^{F^n}$ be the set of closed points of $X$ fixed by $F^n$. Note that $X^{F^1}=X^F$.

Denote by $\mathcal{D}^b(X)=\mathcal{D}^b(X, \overline{\mathbb{Q}}_l)$ the bounded derived category of $\overline{\mathbb{Q}}_l$-constructible complexes on $X$.

The morphism $F_{X}: X\rightarrow X$ naturally induces a functor $F_{X}^*: \mathcal{D}^b(X)\rightarrow \mathcal{D}^b(X)$. A Weil complex is a pair $(\mathcal{F}, j)$ such that $\mathcal{F}\in\mathcal{D}^b(X)$ and $j: F_X^*(\mathcal{F})\rightarrow \mathcal{F}$ is an isomorphism.

Fix an isomorphism $\iota:\overline{\mathbb{Q}}_l\rightarrow \mathbb{C}.$ We refer to \cite{KW} for the definitions of $\iota$-pure  and $\iota$-mixed complexes.
Let $x\in X^F$ be a closed point. For any Weil sheaf $\mathcal{F}$ on $X$, the isomorphism $j: F_X^*(\mathcal{F})\rightarrow \mathcal{F}$ induces
an automorphism $$j_x:\mathcal{F}_{\mid x}\rightarrow\mathcal{F}_{\mid x}.$$
For any $w\in\mathbb{R}$, the weil sheaf $\mathcal{F}$ on $X$ is called $\iota$-pure of weight $w$ if all eigenvalues $\lambda$ of the automorphism
$j^n_x$ satisfies that $|\iota(\lambda)|=(q^n)^{\frac{w}{2}}$ for any $n$ and any $x\in X^{F^n}$.
In this case, we denote $\mbox{Weight}(\mathcal{F})=w$. The sheaf $\mathcal{F}$ is called $\iota$-mixed if it admits a finite filtration of successive quotient which is $\iota$-pure.

For any Weil complex $\mathcal{F}$, $\mathcal{F}$ is called $\iota$-mixed if the cohomology sheaves $\mathcal{H}^i(\mathcal{F})$ are mixed.
Let $\mathcal{D}_w^b(X)$ be the triangulated subcategory of $\mathcal{D}^b(X)$ of $\iota$-mixed Weil complexes and $K_w(X)$ be the Grothendieck group of $\mathcal{D}_w^b(X)$.

Let $x\in X^F$ be a closed point. Given a Weil complex $\mathcal{F}=(\mathcal{F},j)$
in $\mathcal{D}_w^b(X)$,
we get automorphisms $$F_{i,x}:\mathcal{H}^i(\mathcal{F})_{\mid x}\rightarrow\mathcal{H}^i(\mathcal{F})_{\mid x}.$$
One can define a $F$-invariant function $\chi^F_{\mathcal{F}}: X^F\rightarrow \overline{\mathbb{Q}}_l$ via defining
$$\chi^F_{\mathcal{F}}(x)=\sum_{i}(-1)^itr(F_{i,x},\mathcal{H}^i(\mathcal{F})_{\mid x})=\sum_{i}(-1)^itr(F_{i,x}).$$
Similarly, one can define $\chi^{F^n}_{\mathcal{F}}: X^{F^n}\rightarrow \overline{\mathbb{Q}}_l$
via defining
$$\chi^{F^n}_{\mathcal{F}}(x)=\sum_{i}(-1)^itr(F^n_{i,x},\mathcal{H}^i(\mathcal{F})_{\mid x})=\sum_{i}(-1)^itr(F^n_{i,x}).$$
In particular, $\chi^{F^1}=\chi^F.$

\begin{theorem}\cite[Theorem 12.1]{KW}\label{tool}
Let $X$ be as above. Then $\chi^F$ satisfies the following properties.
\begin{enumerate}
  \item Let $\mathcal{K}\rightarrow\mathcal{L}\rightarrow\mathcal{M}\rightarrow\mathcal{K}[1]$ be a distinguished triangle in $\mathcal{D}_w^b(X)$. Then $\chi^F_{\mathcal{K}}+\chi^F_\mathcal{M}=\chi^F_\mathcal{L}.$
  \item Let $g: X\rightarrow Y$ be a morphism. Then for $\mathcal{K}\in \mathcal{D}_w^b(X)$ and $\mathcal{L}\in \mathcal{D}_w^b(Y),$ we have $\chi^F_{Rg_{!}\mathcal{K}}=g_{!}(\chi^F_{\mathcal{K}})$ and $\chi^F_{g^*\mathcal{L}}=g^*(\chi^F_{\mathcal{L}}).$
  \item For $\mathcal{K}\in \mathcal{D}_w^b(X)$, we have $\chi^F_{\mathcal{K}[d]}=(-1)^d\chi^F_{\mathcal{K}}$ and $\chi^F_{\mathcal{K}(n)}=q^{-n}\chi^F_{\mathcal{K}}$.
\end{enumerate}
\end{theorem}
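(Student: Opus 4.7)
The plan is to verify each of the three properties by reducing to a statement at a closed point $x\in X^F$ (respectively $y\in Y^F$). Properties (1) and (3) are essentially linear-algebraic once the stalk-level picture is unwound, while property (2) for proper pushforward is the substantive ingredient, being the Grothendieck--Lefschetz trace formula.

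For (1), I would apply the cohomology functor to the distinguished triangle $\mathcal{K}\to\mathcal{L}\to\mathcal{M}\to\mathcal{K}[1]$ to obtain a long exact sequence of cohomology sheaves, then take the stalk at $x\in X^F$. This yields a long exact sequence of finite-dimensional $\overline{\mathbb{Q}}_l$-vector spaces equipped with compatible Frobenius automorphisms $F_{i,x}$. Since the trace is additive on short exact sequences of endomorphism-equipped vector spaces, chopping the long exact sequence into short exact pieces via kernels and cokernels and taking the alternating sum of traces gives $\chi^F_{\mathcal{K}}(x)+\chi^F_{\mathcal{M}}(x)=\chi^F_{\mathcal{L}}(x)$ pointwise.

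For (2), the pullback identity $\chi^F_{g^*\mathcal{L}}=g^*(\chi^F_{\mathcal{L}})$ is a direct unwinding of definitions: the stalk $(g^*\mathcal{L})_x$ is canonically isomorphic to $\mathcal{L}_{g(x)}$, compatibly with Frobenius, so taking traces and alternating sums gives the statement. The pushforward identity $\chi^F_{Rg_!\mathcal{K}}=g_!(\chi^F_{\mathcal{K}})$ is where the real work lies. For each $y\in Y^F$, proper base change identifies $(Rg_!\mathcal{K})_y$ with $R\Gamma_c(g^{-1}(y),\mathcal{K}|_{g^{-1}(y)})$ compatibly with Frobenius, and the Grothendieck--Lefschetz trace formula then yields
\[
\sum_i(-1)^i\mathrm{tr}\bigl(F,H^i_c(g^{-1}(y),\mathcal{K}|_{g^{-1}(y)})\bigr)=\sum_{x\in g^{-1}(y)^F}\chi^F_{\mathcal{K}}(x),
\]
which is exactly $g_!(\chi^F_{\mathcal{K}})(y)$ by the definition of the pushforward of a function.

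For (3), I would note that the shift functor acts on cohomology sheaves by $\mathcal{H}^i(\mathcal{K}[d])=\mathcal{H}^{i+d}(\mathcal{K})$, so reindexing the alternating sum absorbs exactly a factor of $(-1)^d$. The Tate twist $\mathcal{K}(n)=\mathcal{K}\otimes\overline{\mathbb{Q}}_l(n)$ tensors Frobenius with the scalar $q^{-n}$ coming from the Frobenius eigenvalue on $\overline{\mathbb{Q}}_l(n)$, so each $\mathrm{tr}(F_{i,x})$ is rescaled by $q^{-n}$, yielding the claim. The main obstacle is the Grothendieck--Lefschetz trace formula invoked in (2): it is the only non-formal input, since everything else reduces to additivity of trace, the behavior of stalks under pullback, and the construction of the Tate twist. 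Granting the trace formula together with proper base change, the rest of the theorem assembles from stalk-level linear algebra.
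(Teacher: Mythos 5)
The paper does not prove this theorem; it is cited verbatim as \cite[Theorem 12.1]{KW} (Kiehl--Weissauer) and used as a black box, so there is no in-paper proof to compare against. Your sketch correctly identifies the standard ingredients of the sheaves--functions dictionary: parts (1) and (3) really are stalk-level linear algebra (additivity of alternating trace over the long exact sequence of cohomology sheaves, reindexing under shift, and rescaling by the Frobenius eigenvalue $q^{-n}$ on $\overline{\mathbb{Q}}_l(n)$), the $g^*$ half of (2) is the compatibility of stalks with pullback, and the $g_!$ half of (2) is the genuinely non-formal input. For the latter you correctly invoke proper base change to identify $\mathcal{H}^i(Rg_!\mathcal{K})_y$ with $H^i_c\bigl(g^{-1}(y),\mathcal{K}|_{g^{-1}(y)}\bigr)$ Frobenius-equivariantly, and then the Grothendieck--Lefschetz trace formula; one should add that passing from the sheaf case to a bounded complex $\mathcal{K}$ uses precisely the additivity from part (1) applied to the truncation triangles, but this is a routine remark. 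The argument is sound and matches the standard proof in the literature (e.g.\ \cite{KW}, SGA~4.5 ``Rapport''), so there is no gap, only the observation that your proposal supplies a proof where the paper only supplies a citation.
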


By Theorem \ref{tool}(1), the function $\chi^{F^n}_{\mathcal{F}}$ only depends on the isomorphism class of $\mathcal{F}$ in $\mathcal{D}_w^b(X)$. Let $\mathcal{CF}(X^{F^n})$ be the vector space of all functions $X^{F^n}\rightarrow \overline{\mathbb{Q}}_l.$ Hence, we obtain a map $\chi^{F^n}: K_w(X)\rightarrow \mathcal{CF}(X^{F^n}).$

Let $G$ be an algebraic group over $\mathbb{K}$ and $X$ be a scheme of finite type over $\mathbb{K}$ together with a $G$-action.
Assume that $X$ and $G$ have $\mathbb{F}_q$-structures and $X=X_0\times_{Spec({\mathbb{F}_q})}Spec(\mathbb{K})$, $G=G_0\times_{Spec({\mathbb{F}_q})}Spec(\mathbb{K})$. Let $F_{G_0}: G_0\rightarrow G_0$ be the Frobenius morphism. It can be extended to the morphism $F_{G}:G\rightarrow G.$
Denote by $\mathcal{D}_G^b(X)=\mathcal{D}_G^b(X, \overline{\mathbb{Q}}_l)$ the $G$-equivariant bounded derived category of $\overline{\mathbb{Q}}_l$-constructible complexes on $X$ and $\mathcal{D}_{G,w}^b(X)$ the subcategory of $\mathcal{D}_G^b(X)$ consisting of $\iota$-mixed Weil complexes.
Let $K_{G,w}(X)$ be the Grothendieck group of $\mathcal{D}_{G,w}^b(X)$.

Assume that we have the following commutative diagram
$$\xymatrix{
  G\times X \ar[d]_{F_G\times F_X} \ar[r]
                & X \ar[d]^{F_X}  \\
  G\times X \ar[r]
                & X             .}$$
Then, the morphism $F_{X}: X\rightarrow X$ naturally induces a functor $$F_{X}^*: \mathcal{D}_{G,w}^b(X)\rightarrow \mathcal{D}_{G,w}^b(X).$$
Let $x\in X^F$ be a closed point. For any $(\mathcal{F},j)$ in $\mathcal{D}_{G,w}^b(X)$, we get automorphisms $$F_{i,x}:\mathcal{H}_G^i(\mathcal{F})_{\mid x}\rightarrow\mathcal{H}_G^i(\mathcal{F})_{\mid x}.$$
In the same way, one can define the $G$-equivariant version of $\chi^{F^n}$ for $n\in \mathbb{N}$ by:
$$\chi^F_{\mathcal{F}}(x)=\sum_{i}(-1)^itr(F_{i,x},\mathcal{H}_G^i(\mathcal{F})_{\mid x})$$
and
$$\chi^{F^n}_{\mathcal{F}}(x)=\sum_{i}(-1)^itr(F^n_{i,x},\mathcal{H}_G^i(\mathcal{F})_{\mid x}).$$
In particular, $\chi^{F^1}=\chi^F.$

\begin{lemma}
For any $(\mathcal{F},j)\in\mathcal{D}_{G,w}^b(X)$, $\chi_{\mathcal{F}}^{F^n}$ is a $G$-equivariant function.
\end{lemma}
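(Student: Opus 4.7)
The plan is to show that for any $g\in G^{F^n}$ and $x\in X^{F^n}$ one has $\chi^{F^n}_{\mathcal F}(g\cdot x)=\chi^{F^n}_{\mathcal F}(x)$. For each closed point $g\in G$, let $\phi_g\colon X\to X$ denote the automorphism $x\mapsto g\cdot x$. A $G$-equivariant structure on $\mathcal F$ supplies, by pulling back the standard equivariance isomorphism $a^*\mathcal F\simeq p^*\mathcal F$ along the section $\{g\}\times X\hookrightarrow G\times X$, a canonical isomorphism $\alpha_g\colon \phi_g^*\mathcal F\xrightarrow{\sim}\mathcal F$ in $\mathcal{D}^b(X)$. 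Passing to stalks at $x$ and to cohomology sheaves yields isomorphisms
\[
\alpha_{g,x}^i\colon \mathcal{H}^i_G(\mathcal F)_{\mid g\cdot x}\xrightarrow{\sim}\mathcal{H}^i_G(\mathcal F)_{\mid x},
\]
and since $g\in G^{F^n}$ and $x\in X^{F^n}$ we also have $g\cdot x\in X^{F^n}$, so both Frobenius endomorphisms $F^n_{i,x}$ and $F^n_{i,g\cdot x}$ are defined.

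Next I would use the commutative diagram displayed in the paper together with the hypothesis $g\in G^{F^n}$ to obtain $F^n_X\circ\phi_g=\phi_g\circ F^n_X$. Comparing the Weil structure $j\colon F_X^*\mathcal F\xrightarrow{\sim}\mathcal F$ with its pullback under $\phi_g$, the compatibility of $j$ with the equivariant structure built into $\mathcal D^b_{G,w}(X)$ forces $\alpha_g$ to intertwine the two Weil structures; iterating and taking stalks gives the conjugation relation
\[
\alpha_{g,x}^i\circ F^n_{i,g\cdot x}=F^n_{i,x}\circ\alpha_{g,x}^i
\]
on each cohomology stalk. Invariance of trace under conjugation then yields $\operatorname{tr}(F^n_{i,g\cdot x})=\operatorname{tr}(F^n_{i,x})$ for every $i$, and summing with signs gives $\chi^{F^n}_{\mathcal F}(g\cdot x)=\chi^{F^n}_{\mathcal F}(x)$, as required.

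The only real obstacle is the bookkeeping in step two, namely formalising the compatibility of the Weil structure $j$ with the $G$-equivariance datum. This is precisely the content of being an object of $\mathcal D^b_{G,w}(X)$ rather than just a Weil complex whose underlying complex happens to be $G$-equivariant, and it is what makes essential use of the $\mathbb F_q$-structures on $G$ and $X$ and the compatibility diagram stated in the paper. Once this is made explicit in the Bernstein--Lunts framework, no further input is needed.
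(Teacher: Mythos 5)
Your proof is correct and follows essentially the same route as the paper's: both extract an isomorphism $\alpha_g\colon\phi_g^*\mathcal F\xrightarrow{\sim}\mathcal F$ from the equivariance datum, observe that it intertwines the Frobenius endomorphisms on cohomology stalks (the paper's two commuting squares are exactly your conjugation relation), and conclude by invariance of trace. One small point in your favour: you explicitly restrict to $g\in G^{F^n}$, which is precisely what is needed for $F_X^n\circ\phi_g=\phi_g\circ F_X^n$ and hence for the square to commute — the paper's phrasing ``for any $x$ and $y$ in the same $G$-orbit'' leaves this implicit (one would have to appeal to connectedness of stabilizers and Lang's theorem to reduce to $g\in G^{F^n}$), so your version is the more careful statement of the same argument.
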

\begin{proof}
For any $x$ and $y$ in the same $G$-orbit of $X$, there exists an element $g\in G$ such that $g.x=y$.
Consider the following commutative diagram
$$\xymatrix{
  \mathcal{H}^i_{G}(\mathcal{F})_{\mid x} \ar[d]^{g^\ast} \ar[r]^{F_x}
                &\mathcal{H}^i_{G}(\mathcal{F})_{\mid x} \ar[d]^{g^\ast} \\
  \mathcal{H}^i_{G}(g^{\ast}\mathcal{F})_{\mid y} \ar[r]^{F_y}
                & \mathcal{H}^i_{G}(g^{\ast}\mathcal{F})_{\mid y}.}$$
Since $g^{\ast}\mathcal{F}\simeq\mathcal{F}$, we have
$$\xymatrix{
  \mathcal{H}^i_{G}(\mathcal{F})_{\mid x} \ar[d]^{g^\ast} \ar[r]^{F_x}
                &\mathcal{H}^i_{G}(\mathcal{F})_{\mid x} \ar[d]^{g^\ast} \\
  \mathcal{H}^i_{G}(\mathcal{F})_{\mid y} \ar[r]^{F_y}
                & \mathcal{H}^i_{G}(\mathcal{F})_{\mid y}.}$$
By the definition of $\chi^{F^n}$, $\chi^{F^n}_{\mathcal{F}}(x)=\chi^{F^n}_{\mathcal{F}}(y)$. That is $\chi_{\mathcal{F}}^{F^n}$ is a $G$-equivariant function.
\end{proof}

In the $G$-equivariant case, we also have Theorem \ref{tool}. Hence the function $\chi^{F^n}_{\mathcal{F}}$ also only depends on the isomorphism class of $\mathcal{F}$ in $\mathcal{D}_{G,w}^b(X)$. Hence, we obtain a map $\chi^{F^n}: K_{G,w}(X)\rightarrow \mathcal{CF}_G(X^{F^n}).$

Let $Q$ be a finite quiver.
Given a dimension vector $\alpha=\sum_{i\in Q_0}\alpha_ii\in\mathbb{N} Q_0,$ the variety
$\mathbb{E}_{\alpha}$ and the algebraic group
$G_{\alpha}$ are defined in Section 3. Both of them have natural $\mathbb{F}_q$-structures. Consider the following diagram
$$
\xymatrix{\mathbb{E}_{\alpha}\times\mathbb{E}_{\beta}&\mathbb{E}'\ar[r]^{p_2}\ar[l]_--{p_1}&\mathbb{E}^{''}\ar[r]^{p_3}&\mathbb{E}_{\alpha+\beta}}.
$$
This induces a functor $$\mathbf{m}_{\alpha,\beta}: \mathcal{D}^b_{G_{\alpha}\times G_{\beta},w}(\mathbb{E}_{\alpha}\times\mathbb{E}_{\beta}) \rightarrow\mathcal{D}^b_{G_{\alpha+\beta},w}(\mathbb{E}_{\alpha+\beta})$$ described as the composition of the following functors:
$$
\xymatrix @-0.9pc{\mathcal{D}^b_{G_{\alpha}\times G_{\beta},w}(\mathbb{E}_{\alpha}\times\mathbb{E}_{\beta})\ar[r]^--{\mathfrak{p}^*_1}&\mathcal{D}^b_{G_{\alpha}\times G_{\beta}\times G_{\alpha+\beta},w}(\mathbb{E}')\ar[r]^-{(\mathfrak{p}_2)_{b}}&\mathcal{D}^b_{G_{\alpha+\beta},w}(\mathbb{E}^{''})\ar[r]^{(\mathfrak{p}_3)_{!}}&\mathcal{D}^b_{G_{\alpha+\beta},w}(\mathbb{E}_{\alpha+\beta})},
$$
where $(\mathfrak{p}_2)_b$ is the inverse of the pull-back functor $$\mathfrak{p}_2^*:\mathcal{D}^b_{G_{\alpha+\beta},w}(\mathbb{E}^{''})\rightarrow \mathcal{D}^b_{G_{\alpha}\times G_{\beta}\times G_{\alpha+\beta},w}(\mathbb{E}'),$$ which is an equivalence of derived categories.
By definition, $$\chi^F_{(\mathfrak{p}_2)_b(\mathcal{K})}=\frac{1}{|G_{\alpha}\times G_{\beta}|}\chi^F_{(\mathfrak{p}_2)_{!}(\mathcal{K})}$$
for $\mathcal{K}\in \mathcal{D}^b_{G_{\alpha+\beta},m}(\mathbb{E}^{''})$ since $p_2$ is a principal $G_{\alpha}\times G_{\beta}$-bundle.

Applying Theorem \ref{tool}, we obtain the following commutative diagrams
$$
\xymatrix @C-0.9pc{\mathcal{D}^b_{G_{\alpha}\times G_{\beta},w}(\mathbb{E}_{\alpha}\times\mathbb{E}_{\beta})\ar[d]\ar[r]^--{\mathfrak{p}^*_1}&\mathcal{D}^b_{G_{\alpha}\times G_{\beta}\times G_{\alpha+\beta},w}(\mathbb{E}')\ar[d]\ar[r]^-{(\mathfrak{p}_2)_{b}}&\mathcal{D}^b_{G_{\alpha+\beta},w}(\mathbb{E}^{''})\ar[d]\ar[r]^{(\mathfrak{p}_3)_{!}}&\mathcal{D}^b_{G_{\alpha+\beta},w}(\mathbb{E}_{\alpha+\beta})\ar[d]\\
\mathcal{CF}_{G_{\alpha}\times G_{\beta}}(\mathbb{E}^F_{\alpha}\times\mathbb{E}^F_{\beta})\ar[r]^{p^*_1}&\mathcal{CF}_{G_{\alpha}\times G_{\beta}\times G_{\alpha+\beta}}(\mathbb{E}'^F)\ar[r]^--{\tau}&\mathcal{CF}_{G_{\alpha+\beta}}(\mathbb{E}''^{F})\ar[r]^{(p_3)_{!}}&\mathcal{CF}_{G_{\alpha+\beta}}(\mathbb{E}^F_{\alpha+\beta})}
$$
where $\tau={\frac{1}{|G_{\alpha}\times G_{\beta}|}(p_2)_{!}}$.
Hence, the linear functor $$\mathbf{m}_{\alpha,\beta}: \mathcal{D}^b_{G_{\alpha}\times G_{\beta},w}(\mathbb{E}_{\alpha}\times\mathbb{E}_{\beta})\rightarrow \mathcal{D}^b_{G_{\alpha+\beta},w}(\mathbb{E}_{\alpha+\beta})$$ satisfies that the following diagram is commutative
\begin{equation}\label{commutative-diagran-mult}
\xymatrix{\mathcal{D}^b_{G_{\alpha}\times G_{\beta},w}(\mathbb{E}_{\alpha}\times\mathbb{E}_{\beta})\ar[d]^{\chi^F}\ar[r]^-{\mathbf{m}_{\alpha,\beta}}& \mathcal{D}^b_{G_{\alpha+\beta},w}(\mathbb{E}_{\alpha+\beta})\ar[d]^{\chi^F}\\
\mathcal{CF}_{G^F_{\alpha}\times G^F_{\beta}}(\mathbb{E}^F_{\alpha}\times\mathbb{E}^F_{\beta})\ar[r]^-{\underline{m}_{\alpha,\beta}}&\mathcal{CF}_{G^F_{\alpha+\beta}}(\mathbb{E}^F_{\alpha+\beta}).}
\end{equation}

\begin{lemma}\label{lemma_1}
For simple perverse sheaves $\mathcal{L}\in\mathcal{D}^b_{G_{\alpha}\times G_{\beta},w}(\mathbb{E}_{\alpha}\times\mathbb{E}_{\beta})$, $\mathbf{m}_{\alpha,\beta}(\mathcal{L})$ is still semisimple in $\mathcal{D}^b_{G_{\alpha+\beta},w}(\mathbb{E}_{\alpha+\beta})$.
\end{lemma}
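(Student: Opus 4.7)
The plan is to dissect the factorisation $\mathbf{m}_{\alpha,\beta}=(\mathfrak{p}_3)_{!}\circ(\mathfrak{p}_2)_{b}\circ\mathfrak{p}_1^{*}$ prescribed by the definition and to analyse what each of the three operations does to a simple perverse sheaf, invoking the Beilinson--Bernstein--Deligne decomposition theorem only at the very last stage. The guiding observation is that in the $\iota$-mixed equivariant derived category every simple perverse sheaf is pure (up to a Tate twist), and that both smooth pull-back and descent along a principal bundle preserve this purity. Hence it is enough to show that the first two operations take a simple pure perverse sheaf to a simple pure perverse sheaf, while the last operation is a proper direct image so that BBD delivers semisimplicity.

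First I would verify that $p_{1}\colon\mathbb{E}'\to\mathbb{E}_{\alpha}\times\mathbb{E}_{\beta}$ is smooth with connected fibres of a constant dimension $d_{1}$. A fibre over $(x',x'')$ fibres, in turn, over a product of Grassmannians (the choice of a graded subspace $W$ of dimension vector $\beta$), over two copies of $GL$ (the isomorphisms $\rho_{1}$ and $\rho_{2}$), and over an affine space of extensions of $x'$ by $x''$ that realises the required module structure on $\mathbb{K}^{\alpha+\beta}$; each of these factors is smooth and connected. Consequently $\mathfrak{p}_{1}^{*}[d_{1}]$ is perverse t-exact and sends simple equivariant perverse sheaves to simple equivariant perverse sheaves while preserving purity and $\iota$-mixedness.

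Second, $p_{2}\colon\mathbb{E}'\to\mathbb{E}^{''}$ is a principal $G_{\alpha}\times G_{\beta}$-bundle by construction, so up to an appropriate shift $\mathfrak{p}_{2}^{*}$ is an equivalence between the $G_{\alpha+\beta}$-equivariant and the $G_{\alpha}\times G_{\beta}\times G_{\alpha+\beta}$-equivariant perverse sheaves; its inverse $(\mathfrak{p}_{2})_{b}$ again preserves simplicity and purity. Third, $p_{3}\colon\mathbb{E}^{''}\to\mathbb{E}_{\alpha+\beta}$ is proper, for its fibres are closed subvarieties of $\prod_{i\in Q_{0}}\mathrm{Gr}(\beta_{i},\alpha_{i}+\beta_{i})$ parametrising $x$-invariant graded subspaces of dimension vector $\beta$, hence projective. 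Applying the BBD decomposition theorem to the pure simple perverse sheaf produced by the first two steps shows that $(\mathfrak{p}_{3})_{!}$ of it is a finite direct sum of shifts (with Tate twists) of simple perverse sheaves, which translates into the desired semisimplicity of $\mathbf{m}_{\alpha,\beta}(\mathcal{L})$ inside $\mathcal{D}^{b}_{G_{\alpha+\beta},w}(\mathbb{E}_{\alpha+\beta})$.

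The main obstacle, in my view, is the book-keeping of purity in the $\iota$-mixed equivariant setting: one must confirm that the chosen simple perverse sheaf $\mathcal{L}$ is genuinely pure after a Tate twist (which is the convention used throughout the paper), and that the Frobenius structure is carried correctly through $\mathfrak{p}_{1}^{*}$ and $(\mathfrak{p}_{2})_{b}$ so that the hypotheses of the equivariant version of BBD actually apply. Everything else, including the smoothness of $p_{1}$, the principal-bundle nature of $p_{2}$, and the properness of $p_{3}$, is routine once the correct fibrewise description is unwound.
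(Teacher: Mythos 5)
Your proposal follows exactly the paper's own argument: smooth pull-back along $\mathfrak{p}_1$ preserves simple perverse sheaves up to shift (\cite[4.2.4--4.2.5]{BBD}), $(\mathfrak{p}_2)_b$ is an equivalence so preserves simplicity, and the decomposition theorem applied to the proper map $\mathfrak{p}_3$ yields semisimplicity. Your additional remarks about tracking purity of $\iota$-mixed complexes through the three steps flesh out why the decomposition theorem in the Weil setting applies, which the paper leaves implicit, but the route is the same.
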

\begin{proof}
Since $\mathfrak{p}_1$ is smooth with connect fibres, $\mathfrak{p}^*_1(\mathcal{L})$ is still semisimple by Section 4.2.4 and 4.2.5 in \cite{BBD}.
Since $(\mathfrak{p}_2)_b$ is a equivalence of categories, $(\mathfrak{p}_2)_{b}\mathfrak{p}^*_1(\mathcal{L})$ is still semisimple.
At last, the fact that $\mathfrak{p}_3$ is proper implies that $\mathbf{m}_{\alpha,\beta}(\mathcal{L})=(\mathfrak{p}_3)_!(\mathfrak{p}_2)_{b}\mathfrak{p}^*_1(\mathcal{L})$ is also semisimple.
\end{proof}

By Lemma \ref{lemma_1}, the linear functor $\mathbf{m}_{\alpha,\beta}: \mathcal{D}^b_{G_{\alpha}\times G_{\beta},w}(\mathbb{E}_{\alpha}\times\mathbb{E}_{\beta})\rightarrow \mathcal{D}^b_{G_{\alpha+\beta},w}(\mathbb{E}_{\alpha+\beta})$
induces an additive map
$$\mathfrak{m}_{\alpha,\beta}: K_{G_{\alpha}\times G_{\beta},w}(\mathbb{E}_{\alpha}\times\mathbb{E}_{\beta})\rightarrow K_{G_{\alpha+\beta}}(\mathbb{E}_{\alpha+\beta})$$
such that the following diagram is commutative
\begin{equation}\label{commutative-diagran-mult1}
\xymatrix{K_{G_{\alpha}\times G_{\beta},w}(\mathbb{E}_{\alpha}\times\mathbb{E}_{\beta})\ar[d]^{\chi^F}\ar[r]^-{\mathfrak{m}_{\alpha,\beta}}& K_{G_{\alpha+\beta},w}(\mathbb{E}_{\alpha+\beta})\ar[d]^{\chi^F}\\
\mathcal{CF}_{G^F_{\alpha}\times G^F_{\beta}}(\mathbb{E}^F_{\alpha}\times\mathbb{E}^F_{\beta})\ar[r]^-{\underline{m}_{\alpha,\beta}}&\mathcal{CF}_{G^F_{\alpha+\beta}}(\mathbb{E}^F_{\alpha+\beta}).}
\end{equation}

Set $\mathbf{K}_{w}=\bigoplus_{\alpha}K_{G_{\alpha},w}(\mathbb{E}_{\alpha})$ and $\mathcal{CF}^F(Q)=\bigoplus_{\alpha}\mathcal{CF}_{ G_{\alpha}}(\mathbb{E}^F_{\alpha}).$ There is a linear map from $\mathbf{K}_{w}$ to $\mathcal{CF}^F(Q)$ induced by $\chi^F$. For simplicity, we also denote it by $\chi^F.$
For $\mathcal{M}\in \mathcal{D}_{G_{\alpha},w}^b(\mathbb{E}_{\alpha})$ and $\mathcal{N}\in \mathcal{D}_{G_{\beta},w}^b(\mathbb{E}_{\beta})$, define $[\mathcal{M}]*[\mathcal{N}]:=[\mathbf{m}_{\alpha,\beta}(\mathcal{M}\boxtimes \mathcal{N})].$
Then the linear maps $\mathfrak{m}$ and $\underline{m}$ endow $\mathbf{K}_{w}$ and $\mathcal{CF}^F(Q)$ with multiplication structures, respectively.
Using Diagram (\ref{commutative-diagran-mult1}), we obtain the following result.

\begin{Prop}\label{algebrahomo}
The $\mathbb{Z}$-linear map $\chi^F: \mathbf{K}_{w}\rightarrow \mathcal{CF}^F(Q)$ is a ring homomorphism.
\end{Prop}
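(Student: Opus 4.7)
My plan is to reduce the proposition to the commutative diagram (\ref{commutative-diagran-mult1}) that has already been established. The work in constructing $\mathbf{m}_{\alpha,\beta}$ at the level of complexes, descending it to the Grothendieck group as $\mathfrak{m}_{\alpha,\beta}$, and checking compatibility with $\chi^F$ via Theorem \ref{tool} has essentially done the heavy lifting; what remains is bookkeeping across the $\mathbb{N}Q_0$-grading.

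First, I would fix homogeneous classes $[\mathcal{M}]\in K_{G_\alpha,w}(\mathbb{E}_\alpha)$ and $[\mathcal{N}]\in K_{G_\beta,w}(\mathbb{E}_\beta)$ and show the key identity
\[
\chi^F\bigl([\mathcal{M}]*[\mathcal{N}]\bigr)=\chi^F([\mathcal{M}])*\chi^F([\mathcal{N}]).
\]
By definition the left-hand side equals $\chi^F_{\mathbf{m}_{\alpha,\beta}(\mathcal{M}\boxtimes\mathcal{N})}$, while the right-hand side is $\underline{m}_{\alpha,\beta}(g)$ where $g(x,y)=\chi^F_{\mathcal{M}}(x)\chi^F_{\mathcal{N}}(y)$. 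The only nontrivial identification is the external product step: for $\mathcal{M}\boxtimes\mathcal{N}\in\mathcal{D}^b_{G_\alpha\times G_\beta,w}(\mathbb{E}_\alpha\times\mathbb{E}_\beta)$ one has
\[
\chi^F_{\mathcal{M}\boxtimes\mathcal{N}}(x,y)=\chi^F_{\mathcal{M}}(x)\cdot\chi^F_{\mathcal{N}}(y),
\]
which follows from the Künneth formula for stalks together with the multiplicativity of traces, so it matches the tensor product used to define $\underline{m}_{\alpha,\beta}$. Applying the commutative square (\ref{commutative-diagran-mult1}) to $[\mathcal{M}\boxtimes\mathcal{N}]$ then gives the identity.

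Second, I would extend this from the multiplication of homogeneous pairs to the full multiplication on $\mathbf{K}_w=\bigoplus_\alpha K_{G_\alpha,w}(\mathbb{E}_\alpha)$. Since both multiplications are $\mathbb{Z}$-bilinear and the grading decomposition is respected on both sides, the homogeneous case suffices. It should also be recorded that $\chi^F$ sends the unit $[\overline{\mathbb{Q}}_l]\in K_{G_0,w}(\mathbb{E}_0)$ to the constant function $1_0$ on the point $\mathbb{E}_0$, which is the unit of $\mathcal{CF}^F(Q)$.

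I do not anticipate any real obstacle: the only point that deserves care is the Künneth/external-product identification for $\chi^F_{\mathcal{M}\boxtimes\mathcal{N}}$, which is where the fact that the Frobenius acts diagonally on stalks of the external product is used; everything else is a direct invocation of the diagram (\ref{commutative-diagran-mult1}) already verified via Theorem \ref{tool}. Additivity of $\chi^F$ on Grothendieck classes is automatic from Theorem \ref{tool}(1).
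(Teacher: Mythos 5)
Your proof is correct and follows the same route the paper takes: the paper itself simply asserts the Proposition as an immediate consequence of the commutative diagram (\ref{commutative-diagran-mult1}). You add the two small checks the paper leaves implicit — the Künneth/trace-multiplicativity identity $\chi^F_{\mathcal{M}\boxtimes\mathcal{N}}(x,y)=\chi^F_{\mathcal{M}}(x)\chi^F_{\mathcal{N}}(y)$, which bridges the external product on $\mathbf{K}_w$ to the tensor product used to define $\underline{m}_{\alpha,\beta}$, and the preservation of the unit — and this is exactly the right bookkeeping.
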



Consider the diagram
$$
\xymatrix{\mathbb{E}_{\alpha}\times\mathbb{E}_{\beta}&F_{\alpha,\beta}\ar[l]_-{\kappa}\ar[r]^{i}&\mathbb{E}_{\alpha+\beta}}.
$$
This induces a functor $$\tilde{\mathbf{\Delta}}_{\alpha, \beta}: \mathcal{D}^b_{G_{\alpha+\beta},w}(\mathbb{E}_{\alpha+\beta})\rightarrow \mathcal{D}^b_{G_{\alpha}\times G_{\beta},w}(\mathbb{E}_{\alpha}\times \mathbb{E}_{\beta})$$ as the composition of functors:
$$
\xymatrix{\mathcal{D}^b_{G_{\alpha}\times G_{\beta},w}(\mathbb{E}_{\alpha}\times \mathbb{E}_{\beta})&
\mathcal{D}^b_{G_{\alpha+\beta},w}(F_{\alpha, \beta})
\ar[l]_-{\kappa_{!}}&\mathcal{D}^b_{G_{\alpha+\beta},w}(\mathbb{E}_{\alpha+\beta})\ar[l]_-{i^*}}.
$$
Applying Theorem \ref{tool}, we have the commutative diagram
$$
\xymatrix{\mathcal{D}^b_{G_{\alpha+\beta},w}(\mathbb{E}_{\alpha+\beta})\ar[d]^{\chi^F}\ar[r]^-{\tilde{\mathbf{\Delta}}_{\alpha, \beta}}&\mathcal{D}^b_{G_{\alpha}\times G_{\beta},w}(\mathbb{E}_{\alpha}\times\mathbb{E}_{\beta})\ar[d]^{\chi^F} \\
\mathcal{CF}_{G^F_{\alpha+\beta}}(\mathbb{E}^F_{\alpha+\beta})\ar[r]^-{\tilde{\delta}_{\alpha, \beta}}&\mathcal{CF}_{G^F_{\alpha}\times G^F_{\beta}}(\mathbb{E}^F_{\alpha}\times\mathbb{E}^F_{\beta}).}
$$

\begin{lemma}\cite{Braden}\footnote{The authors thank  Hiraku Nakajima  for pointing out Reference \cite{Braden}.}\label{lemma_2}
For simple perverse sheaves $\mathcal{L}\in\mathcal{D}^b_{G_{\alpha+\beta},w}(\mathbb{E}_{\alpha+\beta})$, $\tilde{\mathbf{\Delta}}_{\alpha,\beta}(\mathcal{L})$ is still semisimple in $\mathcal{D}^b_{G_{\alpha}\times G_{\beta},w}(\mathbb{E}_{\alpha}\times\mathbb{E}_{\beta})$.
\end{lemma}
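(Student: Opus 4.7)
The plan is to recognize $\tilde{\mathbf{\Delta}}_{\alpha,\beta}=\kappa_!i^*$ as a hyperbolic localization functor and then invoke the reference \cite{Braden}. Concretely, I would introduce the one-parameter subgroup $\lambda:\mathbb{G}_m\to G_{\alpha+\beta}$ defined, relative to the splitting $\mathbb{K}^{\alpha+\beta}=(\mathbb{K}^{\alpha+\beta}/W)\oplus W$ built into the definition of $F_{\alpha,\beta}$, by $\lambda(t)=\mathrm{id}_{\mathbb{K}^{\alpha+\beta}/W}\oplus t\cdot\mathrm{id}_W$. The induced action on $\mathbb{E}_{\alpha+\beta}$ sends the block presentation $x=\left(\begin{smallmatrix}a&b\\c&d\end{smallmatrix}\right)$ to $\left(\begin{smallmatrix}a&t^{-1}b\\tc&d\end{smallmatrix}\right)$.

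The fixed locus of this action is the subvariety of block-diagonal representations, canonically identified with $\mathbb{E}_\alpha\times\mathbb{E}_\beta$, and the attracting set $\{x:\lim_{t\to 0}\lambda(t)\cdot x\text{ exists}\}$ is exactly the locus where the lower-left block $c$ vanishes, i.e.\ $F_{\alpha,\beta}$. Moreover the limit map sends $x$ to its block-diagonal part, which is precisely $\kappa$. Therefore the inclusion $i$ and projection $\kappa$ agree with the attracting-set inclusion and the limit projection for $\lambda$, so $\tilde{\mathbf{\Delta}}_{\alpha,\beta}=\kappa_!i^*$ is the hyperbolic localization associated to $\lambda$.

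With this identification, I would invoke Braden's hyperbolic localization theorem \cite{Braden}: for any $\mathbb{G}_m$-equivariant complex $\mathcal{F}$ that is $\iota$-pure, the hyperbolic localization of $\mathcal{F}$ is again $\iota$-pure. The cocharacter $\lambda$ factors through $G_{\alpha+\beta}$, so any object of $\mathcal{D}^b_{G_{\alpha+\beta},w}(\mathbb{E}_{\alpha+\beta})$ is automatically $\mathbb{G}_m$-equivariant; and a simple perverse sheaf in the $\iota$-mixed $G_{\alpha+\beta}$-equivariant derived category is $\iota$-pure after a suitable Tate twist. Consequently $\tilde{\mathbf{\Delta}}_{\alpha,\beta}(\mathcal{L})$ is $\iota$-pure, and by the Beilinson--Bernstein--Deligne decomposition theorem any $\iota$-pure complex is isomorphic to a direct sum of shifts of simple perverse sheaves, hence semisimple.

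The main obstacle I anticipate is of a bookkeeping rather than conceptual nature: one must verify that Braden's theorem applies verbatim in the $\iota$-mixed equivariant setting used here and, more importantly, that the $G_\alpha\times G_\beta$-equivariant structure on $\tilde{\mathbf{\Delta}}_{\alpha,\beta}(\mathcal{L})$ is the one induced through the Levi embedding $G_\alpha\times G_\beta\hookrightarrow G_{\alpha+\beta}$, so that the semisimplicity established abstractly as a complex on $\mathbb{E}_\alpha\times\mathbb{E}_\beta$ lifts to semisimplicity in the equivariant category $\mathcal{D}^b_{G_\alpha\times G_\beta,w}(\mathbb{E}_\alpha\times\mathbb{E}_\beta)$.
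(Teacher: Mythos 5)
Your proof takes exactly the approach the paper does: the paper's entire argument is the one-line observation that $\tilde{\mathbf{\Delta}}_{\alpha,\beta}=\kappa_!i^*$ is a hyperbolic localization, citing Braden, and you have simply spelled out the $\mathbb{G}_m$-cocharacter, fixed locus, attracting set, and limit map that realize this, then invoked Braden's purity preservation together with the decomposition theorem. One small bookkeeping slip: with your stated ordering $(\mathbb{K}^{\alpha+\beta}/W)\oplus W$ and the formula $\lambda(t)\cdot x=\left(\begin{smallmatrix}a&t^{-1}b\\tc&d\end{smallmatrix}\right)$, the set where $\lim_{t\to 0}\lambda(t)\cdot x$ exists is $\{b=0\}$ (the \emph{upper-right} block vanishes), which is indeed the submodule condition defining $F_{\alpha,\beta}$; you wrote ``lower-left block $c$,'' which names the wrong corner, though the identification of the attracting set with $F_{\alpha,\beta}$ and of the limit map with $\kappa$ is correct.
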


\begin{proof}
Since $\tilde{\mathbf{\Delta}}_{\alpha,\beta}$ is a hyperbolic localization (\cite{Braden}).
\end{proof}

By Lemma \ref{lemma_2}, the linear functor $\tilde{\mathbf{\Delta}}_{\alpha,\beta}:\mathcal{D}^b_{G_{\alpha+\beta},w}(\mathbb{E}_{\alpha+\beta})\rightarrow\mathcal{D}^b_{G_{\alpha}\times G_{\beta},w}(\mathbb{E}_{\alpha}\times\mathbb{E}_{\beta})$
induces a linear map
$$\tilde{\Delta}_{\alpha,\beta}:K_{G_{\alpha+\beta},w}(\mathbb{E}_{\alpha+\beta})\rightarrow K_{G_{\alpha}\times G_{\beta},w}(\mathbb{E}_{\alpha}\times\mathbb{E}_{\beta})$$
such that the following diagram is commutative
$$
\xymatrix{K_{G_{\alpha+\beta},w}(\mathbb{E}_{\alpha+\beta})\ar[d]^{\chi^F}\ar[r]^-{\Delta_{\alpha, \beta}}&K_{G_{\alpha}\times G_{\beta},w}(\mathbb{E}_{\alpha}\times\mathbb{E}_{\beta})\ar[d]^{\chi^F} \\
\mathcal{CF}_{G_{\alpha+\beta}}(\mathbb{E}^F_{\alpha+\beta})\ar[r]^-{\delta_{\alpha, \beta}}&\mathcal{CF}_{G_{\alpha}\times G_{\beta}}(\mathbb{E}^F_{\alpha}\times\mathbb{E}^F_{\beta}),}
$$
where $\Delta_{\alpha, \beta}$ is induced by $\mathbf{\Delta}_{\alpha, \beta}=\kappa_{!}i^*[(-2)\sum_{i\in Q_0}\alpha_i\beta_i](-\sum_{i\in Q_0}\alpha_i\beta_i).$

For any $[\mathcal{L}]\in \mathbf{K}_{w}$ such that $\mathcal{L}\in\mathcal{D}^b_{G_{\gamma},w}(\mathbb{E}_{\gamma})$, define $$\Delta([\mathcal{L}])=\sum_{\alpha,\beta;\alpha+\beta=\gamma}\Delta_{\alpha,\beta}([\mathcal{L}]).$$
In the same way as Proposition \ref{algebrahomo}, we obtain the following result.

\begin{Prop}\label{coalghomomorphism}
The $\mathbb{Z}$-linear map $\chi^F: \mathbf{K}_{w}\rightarrow \mathcal{CF}^F(Q)$ satisfies $\delta\circ \chi^F=\chi^F\circ \Delta$.
\end{Prop}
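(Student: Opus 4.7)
The plan is to reduce the global identity $\delta\circ\chi^F=\chi^F\circ\Delta$ to the componentwise commutativity $\delta_{\alpha,\beta}\circ\chi^F=\chi^F\circ\Delta_{\alpha,\beta}$ for each decomposition $\alpha+\beta=\gamma$, which is exactly the content of the commutative square displayed just above the proposition. By definition $\Delta$ acts on a class in $K_{G_{\gamma},w}(\mathbb{E}_{\gamma})$ as $\sum_{\alpha+\beta=\gamma}\Delta_{\alpha,\beta}$, and likewise $\delta$ acts on a function in $\mathcal{CF}_{G_{\gamma}}(\mathbb{E}^F_{\gamma})$ as $\sum_{\alpha+\beta=\gamma}\delta_{\alpha,\beta}$; since $\chi^F$ respects the $\mathbb{N}Q_0$-grading of both $\mathbf{K}_{w}$ and $\mathcal{CF}^F(Q)$, a single summation over $\alpha+\beta=\gamma$ converts the componentwise identities into the claim.

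To verify the componentwise square, I would unfold the definition $\mathbf{\Delta}_{\alpha,\beta}=\kappa_{!}\,i^{*}[-2\sum_{i\in Q_0}\alpha_i\beta_i](-\sum_{i\in Q_0}\alpha_i\beta_i)$ and apply Theorem \ref{tool} piece by piece. Part (2) of that theorem intertwines $\chi^F$ with the pullback $i^{*}$ along the closed inclusion and with the proper pushforward $\kappa_{!}$ along the vector bundle $\kappa$, producing on the function side precisely the untwisted restriction map $\tilde{\delta}_{\alpha,\beta}=\kappa_{!}\,i^{*}$ introduced in Section 3. Part (3) turns the cohomological shift into a sign (trivial, since the shift is even) and the Tate twist into an explicit power of $q$; this power is then matched with the normalization factor in $\delta_{\alpha,\beta}=q^{-\sum_{i\in Q_0}\alpha_i\beta_i}\tilde{\delta}_{\alpha,\beta}$, so that the two paths around the square agree on the character level.

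There is no substantial obstacle once Lemma \ref{lemma_2} guarantees that $\tilde{\mathbf{\Delta}}_{\alpha,\beta}$ preserves semisimplicity and the $\iota$-mixed subcategory, so that it descends to a well-defined map $\Delta_{\alpha,\beta}$ at the level of Grothendieck groups. The whole argument runs in exact parallel with the proof of Proposition \ref{algebrahomo} for the multiplication, with the pair $(i,\kappa)$ replacing the triple $(p_1,p_2,p_3)$, and with the Tate-twist normalization of $\delta_{\alpha,\beta}$ replacing the $|G^F_{\alpha}\times G^F_{\beta}|^{-1}$ normalization that appeared there. Consequently the only delicate step is the scalar bookkeeping between the cohomological shift and Tate twist in $\mathbf{\Delta}_{\alpha,\beta}$ on the one hand, and the normalizing factor defining $\delta_{\alpha,\beta}$ from $\tilde{\delta}_{\alpha,\beta}$ on the other; once these are reconciled, the proposition follows from Theorem \ref{tool} and the additivity of $\chi^F$.
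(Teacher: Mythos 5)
Your proposal spells out exactly the argument the paper leaves implicit (the paper simply says ``In the same way as Proposition~\ref{algebrahomo}''), and the ingredients you cite---Theorem~\ref{tool}(2) to intertwine $\chi^F$ with $i^*$ and $\kappa_!$, Theorem~\ref{tool}(3) for the shift and Tate twist, and Lemma~\ref{lemma_2} for descent to Grothendieck groups---are precisely the right ones. The one point to pin down is the sign in the ``delicate step'' you flag: with the paper's stated $\mathbf{\Delta}_{\alpha,\beta}=\kappa_!\,i^*[-2m](-m)$, where $m=\sum_{i\in Q_0}\alpha_i\beta_i$, Theorem~\ref{tool}(3) yields $\chi^F\circ\mathbf{\Delta}_{\alpha,\beta}=q^{+m}\,\tilde{\delta}_{\alpha,\beta}\circ\chi^F$ rather than $q^{-m}\,\tilde{\delta}_{\alpha,\beta}\circ\chi^F=\delta_{\alpha,\beta}\circ\chi^F$; the asserted match requires the opposite signs $[2m](m)$ in the normalization of $\mathbf{\Delta}_{\alpha,\beta}$, which is presumably a typo in the paper rather than a flaw in your plan.
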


In Section 3, we have shown that there exists a comultiplication structure over $\mathcal{CF}^F(Q)$. By Green's theorem, the comultiplication is compatible with the multiplication structure and then $\mathcal{CF}^F(Q)$ is a bialgebra. Naturally, one would like to check whether the map $\Delta$ is compatible with the multiplication structure of $\mathbf{K}_{w}$.
Let $\mathcal{CF}^{F^n}(Q)=\bigoplus_{\alpha}\mathcal{CF}_{G_{\alpha}}(\mathbb{E}^{F^n}_{\alpha}).$
Similarly to $\chi^F:\mathbf{K}_{w}\rightarrow\mathcal{CF}^{F}(Q)$, we have $\chi^{F^n}:\mathbf{K}_{w}\rightarrow\mathcal{CF}^{F^n}(Q)$
for any $n\in\mathbb{N}$.

\begin{theorem}\cite[Theorem 12.1]{KW}\label{injective}
The ring  homomorphism $$\chi=\prod_{n\in \mathbb{N}}\chi^{F^n}: \mathbf{K}_{w}\rightarrow \prod_{n\in \mathbb{N}}\mathcal{CF}^{F^n}(Q)$$ is injective.
\end{theorem}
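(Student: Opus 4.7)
The plan is to show that if $[\mathcal{K}] \in \mathbf{K}_w$ satisfies $\chi^{F^n}([\mathcal{K}]) = 0$ for every $n \geq 1$, then $[\mathcal{K}] = 0$. Since both sides are graded by $\alpha \in \mathbb{N}Q_0$ and each $\chi^{F^n}$ respects this grading, I can fix $\alpha$ and work inside $K_{G_\alpha, w}(\mathbb{E}_\alpha)$. First I would invoke the existence of the weight filtration on $\iota$-mixed Weil complexes (cf.\ \cite{KW}, Chapter III) to decompose $[\mathcal{K}]$ in the Grothendieck group as a finite $\mathbb{Z}$-linear combination $\sum_\xi c_\xi [\mathcal{L}_\xi]$, where each $\mathcal{L}_\xi$ is a (shift of a) simple $G_\alpha$-equivariant perverse sheaf which is $\iota$-pure of some integer weight $w_\xi$; Theorem \ref{tool}(1) then yields $\chi^{F^n}_{\mathcal{K}} = \sum_\xi c_\xi\, \chi^{F^n}_{\mathcal{L}_\xi}$ as a pointwise identity on $\mathbb{E}_\alpha^{F^n}$.

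Second, I would separate contributions of different weights using Deligne's purity theorem: for each closed $x \in \mathbb{E}_\alpha^{F^n}$, every Frobenius eigenvalue on the stalk cohomology $\mathcal{H}^i(\mathcal{L}_\xi)_{\mid x}$ has $\iota$-absolute value $(q^n)^{w_\xi/2}$, so $|\iota(\chi^{F^n}_{\mathcal{L}_\xi}(x))| = O(q^{n w_\xi/2})$ as $n \to \infty$. Comparing growth rates across the countably many conditions $\chi^{F^n}([\mathcal{K}])=0$, a standard Vandermonde-type argument (distinct $q^{w_\xi/2}$ yield linearly independent sequences in $n$) forces the subsum over each fixed weight to vanish identically as a function on $\bigsqcup_{n \geq 1} \mathbb{E}_\alpha^{F^n}$. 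This reduces the problem to the case where all $\mathcal{L}_\xi$ appearing in the decomposition share a common weight, i.e.\ to an $\iota$-pure class.

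For the remaining $\iota$-pure case, the assertion amounts to the linear independence of the trace functions of distinct simple $G_\alpha$-equivariant perverse sheaves on $\mathbb{E}_\alpha$ when evaluated on $\bigsqcup_{n} \mathbb{E}_\alpha^{F^n}$. I would argue as follows: restricting each $\mathcal{L}_\xi$ to a smooth open dense stratum of its support where it becomes a shifted lisse sheaf, the Grothendieck-Lefschetz trace formula expresses $\chi^{F^n}_{\mathcal{L}_\xi}(x)$ in terms of the trace of $F^n_x$ on the underlying local system. Chebotarev-type density of Frobenius conjugacy classes at $\mathbb{F}_{q^n}$-points as $n$ varies (see \cite{KW}, \S12) then lets one recover each local system's character, hence the class $[\mathcal{L}_\xi]$, from the aggregate trace data. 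This is precisely \cite[Theorem 12.1]{KW}, which I would quote directly.

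The weight-separation step should be routine; the main obstacle is the final density/separation step for pure constituents, which requires genuinely geometric input. Since this is exactly the content of the cited Kiehl--Weissauer theorem, the proof reduces to a careful bookkeeping of the weight filtration together with an appeal to \cite[Theorem 12.1]{KW}.
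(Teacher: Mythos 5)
The paper itself offers no proof of this statement: Theorem~\ref{injective} is stated with a direct citation to \cite[Theorem 12.1]{KW} and treated as a black box. Your proposal tries to reconstruct that argument, but there is a concrete error in the weight-separation step. You assert that for a simple perverse sheaf $\mathcal{L}_\xi$ which is $\iota$-pure of weight $w_\xi$, ``every Frobenius eigenvalue on the stalk cohomology $\mathcal{H}^i(\mathcal{L}_\xi)_{\mid x}$ has $\iota$-absolute value $(q^n)^{w_\xi/2}$.'' This is false. Deligne's weight theorem only gives a one-sided bound: purity of the complex $\mathcal{L}_\xi$ implies that $\mathcal{H}^i(\mathcal{L}_\xi)$ is $\iota$-mixed of weight $\leq w_\xi + i$, and the stalks of a pure perverse sheaf are in general genuinely mixed of strictly smaller weights (think of the intersection complex of a singular variety evaluated at a singular point, where a whole range of weights occurs). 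As a consequence, the Vandermonde-style ``growth-rate separation'' does not isolate the weight-$w_\xi$ subsum: at a fixed $x\in\mathbb{E}_\alpha^{F^n}$ what you can separate by linear independence of the sequences $\lambda\mapsto\lambda^n$ are individual Frobenius \emph{eigenvalues} with multiplicity, not the contributions sorted by the weight of the parent perverse sheaf, since a heavy $\mathcal{L}_\xi$ can contribute eigenvalues of arbitrarily small absolute value. So your reduction ``to an $\iota$-pure class'' is not actually established.

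The last stage of your proposal is also circular: after the (flawed) reduction you invoke ``Chebotarev-type density as in \cite{KW}, \S 12'' and then say the remaining step ``is precisely \cite[Theorem 12.1]{KW}, which I would quote directly.'' That is, the sketch ultimately cites the theorem it is trying to prove. In effect you and the paper end up in the same place—deferring to Kiehl--Weissauer—but you pass through an intermediate reduction that is both unnecessary and, as stated, incorrect. If you want to say something beyond the citation, the two points worth addressing are ones that the paper also glosses over: how the $G_\alpha$-equivariant Grothendieck group statement follows from the non-equivariant one (the connectedness of $G_\alpha$, so that simple $G_\alpha$-equivariant perverse sheaves remain simple after forgetting equivariance, is the relevant fact), and the actual structure of the Kiehl--Weissauer argument, which is a noetherian induction on supports combined with Chebotarev for the lisse locus and a Tate-twist analysis on a point to handle the Weil (as opposed to \'etale) structure—not a weight-by-weight growth estimate.
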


To simplify the notations, we set $$\mathcal{D}_{\alpha,\beta}=\mathcal{D}^b_{G_{\alpha}\times G_{\beta},w}(\mathbb{E}_{\alpha}\times \mathbb{E}_{\beta}) $$$$\mathcal{D}_{\alpha',\alpha'',\beta',\beta''}=\mathcal{D}^b_{G_{\alpha'}\times G_{\alpha''}\times G_{\beta'}\times G_{\beta''},w}(\mathbb{E}_{\alpha'}\times \mathbb{E}_{\alpha''}\times \mathbb{E}_{\beta'}\times \mathbb{E}_{\beta''})$$ and
$$K_{\alpha,\beta}=K_{G_{\alpha}\times G_{\beta},w}(\mathbb{E}_{\alpha}\times \mathbb{E}_{\beta})$$$$ K_{\alpha',\alpha'',\beta',\beta''}=K_{G_{\alpha'}\times G_{\alpha''}\times G_{\beta'}\times G_{\beta''},w}(\mathbb{E}_{\alpha'}\times \mathbb{E}_{\alpha''}\times \mathbb{E}_{\beta'}\times \mathbb{E}_{\beta''}).$$

One can view $\mathbb{E}_{\alpha}\times \mathbb{E}_{\beta}$ as the variety of representations over $Q\times Q$ with dimension vector $(\alpha, \beta)$ and then $\mathbb{E}_{(\alpha, \beta)}=\mathbb{E}_{(\alpha, \beta)}(Q\times Q)=\mathbb{E}_{\alpha}(Q)\times \mathbb{E}_{\beta}(Q)=\mathbb{E}_{\alpha}\times \mathbb{E}_{\beta}$.
As in Section 3, substituting $Q\times Q$ for $Q$, we obtain the diagrams
$$
\xymatrix{\mathbb{E}_{(\alpha',\alpha'')}\times\mathbb{E}_{(\beta', \beta'')}&\mathbb{E}'(Q\times Q)\ar[r]^{p_2}\ar[l]_--{p_1}&\mathbb{E}''(Q\times Q)\ar[r]^-{p_3}&\mathbb{E}_{(\alpha'+\beta', \alpha''+\beta'')}}
$$
and
$$
\xymatrix{\mathbb{E}_{(\alpha',\beta')}\times\mathbb{E}_{(\alpha'', \beta'')}&F_{(\alpha',\beta'),(\alpha'',\beta'')}(Q\times Q)\ar[l]_-{\kappa}\ar[r]^-{i}&\mathbb{E}_{(\alpha,\beta)}}.
$$
for $\alpha=\alpha'+\alpha''$ and $\beta=\beta'+\beta''.$
There is a natural homeomorphism $\mathbb{E}_{\alpha'}\times \mathbb{E}_{\alpha''}\times \mathbb{E}_{\beta'}\times \mathbb{E}_{\beta''}\rightarrow \mathbb{E}_{\alpha'}\times \mathbb{E}_{\beta'}\times \mathbb{E}_{\alpha''}\times \mathbb{E}_{\beta''}$. It induces an equivalence $\tau: \mathcal{D}_{\alpha',\beta',\alpha'',\beta''}\rightarrow \mathcal{D}_{\alpha',\alpha'',\beta',\beta''}.$ In the same way as above, there exists the induction functor $\mathbf{m}_{(\alpha',\alpha'',\beta',\beta'')}=(\mathfrak{p}_3)_{!}(\mathfrak{p}_2)_b(\mathfrak{p}_1)^*[2\langle\alpha', \beta''\rangle](\langle\alpha', \beta''\rangle): \md_{\alpha', \alpha'', \beta',\beta''}\rightarrow \md_{\alpha'+\beta',\alpha''+\beta''}$ and the restriction functor $\tilde{\mathbf{\Delta}}_{(\alpha',\beta'),(\alpha'',\beta'')}=(\kappa)_{!}(i)^*: \md_{\alpha, \beta}\rightarrow \md_{\alpha',\beta',\alpha'',\beta''}$. The functors induce the additive maps over Grothendieck groups $\mathfrak{m}: K_{\alpha',\alpha'',\beta',\beta''}\rightarrow K_{\alpha'+\beta',\alpha''+\beta''}$ and $\tilde{\Delta}_{\alpha',\alpha'',\beta',\beta''}:  K_{\alpha, \beta}\rightarrow K_{\alpha',\beta',\alpha'',\beta''}$.
In the same way, one can define $$\mathbf{\Delta}_{(\alpha',\beta'),(\alpha'',\beta'')}=\tilde{\mathbf{\Delta}}_{(\alpha',\beta'),(\alpha'',\beta'')}[-2\sum_{i\in Q_0}\alpha'_i\alpha''_i+\beta'_i\beta''_i](-\sum_{i\in Q_0}\alpha'_i\alpha''_i+\beta'_i\beta''_i).$$ For  $\mathcal{L}_1\in \md_{\alpha}=\mathcal{D}^b_{G_{\alpha}, w}(\mathbb{E}_{\alpha})$ and $\mathcal{L}_2\in \md_{\beta}=\mathcal{D}^b_{G_{\beta}, w}(\mathbb{E}_{\beta})$, we set $\mathcal{L}_1*\mathcal{L}_2=\mathbf{m}_{\alpha,\beta}(\mathcal{L}_1\boxtimes\mathcal{L}_2)$. Here, we apply the functor $\md_{\alpha}\times \md_{\beta}\rightarrow \md_{\alpha,\beta}$ sending $(U, V)$ to $U\boxtimes V.$ Similarly, for $\mathcal{F}_1\in \md_{\alpha',\alpha''}$ and $\mathcal{F}_2\in\md_{\beta', \beta''}$, define $\mathcal{F}_1*\mathcal{F}_2=\mathbf{m}_{(\alpha',\alpha''),(\beta',\beta'')}(\mathcal{F}_1\boxtimes\mathcal{F}_2).$ The following theorem can be viewed as the categorification of the Green formula in Section 2.
\begin{theorem}\label{thm_main}
Let $\alpha, \beta, u, v$ be dimension vectors such that $\alpha+\beta=u+v=\gamma$ and $\mathcal{L}_1\in \mathcal{D}^b_{G_{\alpha}, w}(\mathbb{E}_{\alpha})$, $\mathcal{L}_2\in \mathcal{D}^b_{G_{\beta}, w}(\mathbb{E}_{\beta})$ be two simple perverse sheaves. Set $\mathcal{N}=\{\lambda=(\alpha',\alpha'',\beta',\beta'')\mid \alpha=\alpha'+\alpha'',\beta=\beta'+\beta'', u=\alpha'+\beta', v=\alpha''+\beta''\}$. Then we have
$$
\mathbf{\Delta}_{u, v}(\mathcal{L}_1*\mathcal{L}_2)=\bigoplus_{\lambda\in\mathcal{N}}\mathbf{\Delta}_{\alpha',\alpha''}(\mathcal{L}_1)*\mathbf{\Delta}_{\beta', \beta''}(\mathcal{L}_2).
$$
\end{theorem}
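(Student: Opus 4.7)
The plan is to reduce this categorical identity to its counterpart for characteristic functions at every level $\mathbb{F}_{q^n}$, where the geometric form of Green's formula (Theorem \ref{green-geometry}) has already done the combinatorial work, and then invoke the injectivity in Theorem \ref{injective} to lift the equality back to $\mathbf{K}_w$.

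First I would verify that both sides of the desired identity are semisimple, so that the comparison is meaningful. By Lemma \ref{lemma_1}, the convolution $\mathcal{L}_1 * \mathcal{L}_2$ is semisimple in $\mathcal{D}^b_{G_\gamma,w}(\mathbb{E}_\gamma)$, and by Lemma \ref{lemma_2}, $\mathbf{\Delta}_{u,v}$ (which differs from $\tilde{\mathbf{\Delta}}_{u,v}$ only by a shift and a Tate twist) preserves semisimplicity, so the left-hand side is semisimple in $\mathcal{D}_{u,v}$. For the right-hand side, Lemma \ref{lemma_2} ensures that each $\mathbf{\Delta}_{\alpha',\alpha''}(\mathcal{L}_1)$ and $\mathbf{\Delta}_{\beta',\beta''}(\mathcal{L}_2)$ is semisimple, and the convolution functor $\mathbf{m}_{(\alpha',\alpha''),(\beta',\beta'')}$ (Lemma \ref{lemma_1} applied with $Q$ replaced by $Q\times Q$) preserves this. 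Hence both sides define classes in $K_{u,v}$.

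Next I would push the identity down to constructible functions via $\chi^{F^n}$. Diagram (\ref{commutative-diagran-mult1}) and its analogue for the restriction functor show that $\chi^{F^n}$ intertwines $\mathfrak{m}_{\alpha,\beta}$ with $\underline{m}_{\alpha,\beta}$ and $\tilde{\Delta}_{\alpha,\beta}$ (hence $\Delta_{\alpha,\beta}$) with $\tilde{\delta}_{\alpha,\beta}$ (hence $\delta_{\alpha,\beta}$). The identical argument, applied with $Q$ replaced by $Q\times Q$, covers the four-index versions needed on the right-hand side. Applying $\chi^{F^n}$ to the left-hand side therefore yields $\delta_{u,v}\,\underline{m}_{\alpha,\beta}\bigl(\chi^{F^n}(\mathcal{L}_1)\otimes\chi^{F^n}(\mathcal{L}_2)\bigr)$, while on the right-hand side it yields $\underline{m}\,\delta\bigl(\chi^{F^n}(\mathcal{L}_1)\otimes\chi^{F^n}(\mathcal{L}_2)\bigr)$. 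These coincide by Theorem \ref{green-geometry}, which holds over every finite field $\mathbb{F}_{q^n}$.

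Thus $\chi^{F^n}$ agrees on both sides for every $n\geq 1$, and the injectivity in Theorem \ref{injective} then forces equality of the two classes in $\mathbf{K}_w$; since both sides are semisimple, this yields the stated identity. The main technical nuisance I anticipate is the careful bookkeeping of cohomological shifts and Tate twists entering the definitions of $\mathbf{\Delta}_{(\alpha',\beta'),(\alpha'',\beta'')}$ and $\mathbf{m}_{(\alpha',\alpha'',\beta',\beta'')}$: these must be arranged so that, after passing through $\chi^{F^n}$, the powers of $q^n$ produced by Theorem \ref{tool}(3) match precisely the $q^{-\sum_{i\in Q_0}\alpha_i\beta_i}$ normalizations built into $\delta_{\alpha,\beta}$ and the Euler-form factor $q^{-\langle\alpha_2,\beta_1\rangle}$ in equation (\ref{m_delta}). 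Once these normalizations are checked to line up, Theorem \ref{green-geometry} supplies the required function-level identity with no spurious twist.
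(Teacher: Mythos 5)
Your proposal is correct and follows essentially the same route as the paper's proof: establish semisimplicity of both sides via Lemmas \ref{lemma_1} and \ref{lemma_2} so that equality in $\mathbf{K}_w$ suffices, apply $\chi^{F^n}$ to reduce to the function-level Green identity $\delta_{u,v}\,\underline{m}_{\alpha,\beta}=\underline{m}\,\delta$ (Theorem \ref{green-geometry}), and conclude by the injectivity of $\chi=\prod_n\chi^{F^n}$ from Theorem \ref{injective}. You also correctly flag the shift/Tate-twist bookkeeping as the place where care is needed; the paper handles this implicitly through the normalizations built into the definitions of $\mathbf{\Delta}$, $\mathbf{m}$ and the $\tau$ permutation over $Q\times Q$, rather than re-verifying it in the proof.
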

\begin{proof}
Let $\gamma=\alpha+\beta$ and $N^{\tau}=\{\lambda=(\alpha',\beta',\alpha'',\beta'')\mid \alpha=\alpha'+\alpha'',\beta=\beta'+\beta'', u=\alpha'+\beta', v=\alpha''+\beta''\}$.  Consider the diagram
$$
\xymatrix{K_{\alpha, \beta}\ar[d]_{(\Delta^{tw}_{\lambda})_{\lambda\in \mathcal{N}^{\tau}}}\ar[rr]^{\mathfrak{m}_{\alpha,\beta}}&&K_{\gamma}\ar[d]^{\Delta^{tw}_{(u,v)}}\\
\coprod_{\lambda\in \mathcal{N}^{\tau}}K_{\lambda}\ar[r]^{(\tau_{\lambda})_{\lambda\in\mathcal{N}^{\tau}}}&\coprod_{\lambda\in \mathcal{N}}K_{\lambda}\ar[r]^-{(\mathfrak{m}_{\lambda})}&K_{u,v}.}
$$

For simple perverse sheaves $\mathcal{L}_1\in \mathcal{D}^b_{G_{\alpha}, w}(\mathbb{E}_{\alpha})$ and $\mathcal{L}_2\in \mathcal{D}^b_{G_{\beta}, w}(\mathbb{E}_{\beta})$, $\mathcal{L}_1*\mathcal{L}_2$ is still semisimple  by Lemma \ref{lemma_1} and then $\mathbf{\Delta}_{u,v}(\mathcal{L}_1*\mathcal{L}_2)$ is semisimple by Lemma \ref{lemma_2}.
The rightside term $\mathbf{\Delta}_{\alpha',\alpha''}(\mathcal{L}_1)*\mathbf{\Delta}_{\beta', \beta''}(\mathcal{L}_2)$ of the equation
is also semisimple by Lemma \ref{lemma_1} and \ref{lemma_2}. It is equivalent to show that the equation holds in $K_{w}.$  Hence, it is enough to prove that the diagram is commutative.  By Theorem \ref{injective}, it is equivalent to show that
$$
\chi^{F^n}\circ\Delta_{u,v}\circ \mathfrak{m}_{\alpha,\beta}=\chi^{F^n}\circ\mathfrak{m}_{\lambda}\circ(\tau_{\lambda})_{\lambda\in\mathcal{N}^{\tau}}\circ(\Delta_{\lambda})_{\lambda\in\mathcal{N}^{\tau}}
$$
for any $n\in \mathbb{N}.$ By Proposition \ref{algebrahomo} and \ref{coalghomomorphism}, one need to prove  $\delta_{u,v}\underline{m}_{\alpha,\beta}=\underline{m}\delta_1.$ By Theorem \ref{green-geometry}, we have the desired equation.
\end{proof}

The ring $\mathbf{K}_{w}$ has a natural $\mathbb{Z}$-module structure with a basis consisting of the isomorphism classes of simple perverse sheaves.
There is also a natural $\mathbb{A}=\mathbb{Z}[v,v^{-1}]$-module structure on $\mathbf{K}_{w}$ by $$v[\mathcal{L}]=[\mathcal{L}[1](\frac{1}{2})] \mbox{ and } v^{-1}[\mathcal{L}]=[\mathcal{L}[-1](-\frac{1}{2})]$$
for any dimension vector $\alpha$ and $\mathcal{L}\in\mathcal{D}^b_{G_{\alpha},w}(\mathbb{E}_{\alpha})$.
Consider the equivalence classes of this basis under Tate twist and choose a representative in any equivalence class. The set of all representatives is denoted by $\mathbf{B}_w$.

\begin{Prop}
The ring $\mathbf{K}_{w}$ is a free $\mathbb{A}$-module with $\mathbf{B}_w$ as a basis.
\end{Prop}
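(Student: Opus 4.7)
The plan is to leverage the preceding assertion that $\mathbf{K}_w$ is a free $\mathbb{Z}$-module with basis given by the isomorphism classes of simple perverse sheaves, and then to analyze how the $\mathbb{A}$-action permutes this $\mathbb{Z}$-basis up to sign.

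First, I would use the standard relation $[\mathcal{K}[1]] = -[\mathcal{K}]$ holding in the Grothendieck group of any triangulated category (obtained by rotating the distinguished triangle $\mathcal{K} \xrightarrow{\mathrm{id}} \mathcal{K} \to 0 \to \mathcal{K}[1]$). Combined with $v[\mathcal{L}] = [\mathcal{L}[1](\tfrac{1}{2})]$ and iteration, this yields
$$v^n [\mathcal{L}] = (-1)^n \bigl[\mathcal{L}(\tfrac{n}{2})\bigr] \quad \text{for all } n \in \mathbb{Z}$$
and every simple perverse sheaf $\mathcal{L}$. In particular, $v^n [\mathcal{L}]$ is, up to sign, the isomorphism class of the simple perverse sheaf $\mathcal{L}(\tfrac{n}{2})$, and hence again an element of the $\mathbb{Z}$-basis.

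Next, I would verify that the sheaves $\{\mathcal{L}(\tfrac{n}{2}) : n \in \mathbb{Z}\}$ are pairwise non-isomorphic, so that the $\mathbb{A}$-orbit of $[\mathcal{L}]$ meets the $\mathbb{Z}$-basis in an infinite set. This is automatic since a simple perverse sheaf in $\mathcal{D}^b_{G_\alpha, w}(\mathbb{E}_\alpha)$ is $\iota$-pure of some well-defined weight $w(\mathcal{L})$, and Tate twisting by $\tfrac{n}{2}$ shifts this weight by $-n$; distinct $n$ therefore yield distinct weights and distinct isomorphism classes.

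Finally, the equivalence relation on the $\mathbb{Z}$-basis defining $\mathbf{B}_w$ partitions simple perverse sheaves into Tate-twist orbits, and I would verify the two defining properties of a free basis one orbit at a time. For $\mathcal{L} \in \mathbf{B}_w$, the cyclic submodule $\mathbb{A} \cdot [\mathcal{L}]$ coincides with the $\mathbb{Z}$-span of its orbit, so it covers that orbit. Conversely, any $\mathbb{A}$-relation $\sum_n a_n v^n [\mathcal{L}] = 0$ rewrites via the formula above as $\sum_n (-1)^n a_n [\mathcal{L}(\tfrac{n}{2})] = 0$, which forces each $a_n = 0$ by the $\mathbb{Z}$-linear independence of the distinct basis elements $[\mathcal{L}(\tfrac{n}{2})]$; hence $\mathbb{A} \cdot [\mathcal{L}]$ is free of rank one. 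Summing over $\mathbf{B}_w$ yields the direct sum decomposition
$$\mathbf{K}_w = \bigoplus_{\mathcal{L} \in \mathbf{B}_w} \mathbb{A} \cdot [\mathcal{L}],$$
which is exactly the claim. The argument is essentially formal once the preceding $\mathbb{Z}$-basis result is granted; the only nontrivial input is the weight-distinguishing step of the third paragraph, which is immediate from the definition of $\mathcal{D}^b_{G_\alpha, w}(\mathbb{E}_\alpha)$ as the $\iota$-mixed category.
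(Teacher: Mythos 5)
Your proof is correct. The paper states this proposition without proof, and your argument is the natural one: use the relation $[\mathcal{K}[1]]=-[\mathcal{K}]$ in the Grothendieck group of the triangulated category to rewrite the $\mathbb{A}$-action as $v^n[\mathcal{L}]=(-1)^n[\mathcal{L}(\tfrac{n}{2})]$, observe that each simple $\iota$-mixed perverse sheaf is $\iota$-pure (its weight filtration being trivial) so that half-integer Tate twists give pairwise non-isomorphic simples, and then regroup the $\mathbb{Z}$-basis of simples into $\mathbb{A}$-cyclic orbits to get $\mathbf{K}_w=\bigoplus_{\mathcal{L}\in\mathbf{B}_w}\mathbb{A}\cdot[\mathcal{L}]$. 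The only point worth flagging is that for this to work the equivalence relation defining $\mathbf{B}_w$ has to be taken with respect to half-integer Tate twists $(\tfrac{n}{2})$, $n\in\mathbb{Z}$ (equivalently, the $\mathbb{A}$-orbit up to sign), not integer twists alone; the paper's phrase ``equivalence classes under Tate twist'' is slightly loose, and you have (correctly, and necessarily for the statement to hold) read it in the half-integer sense.
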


For a variety $X$ as above, we denote by $\md^b_{im, G, w}(X)$ the subcategory of $\md^b_{G,w}(X)$ consisting of $\iota$-mixed Weil complexes of integer weights. Let $K_{im,G,w}(X)$ be the corresponding  Grothendieck group.

Set $$\mathbf{I}_w=\bigoplus_{\alpha} K_{im, G_{\alpha}, w}(\mathbb{E}_{\alpha}).$$
For any $\alpha\in\mathbb{N}Q_0$, let $\mathcal{I}_{w,\alpha}$ be the set of direct sums of $\iota$-pure simple perverse sheaves in $\md^b_{im, G, w}(\mathbb{E}_{\alpha})$ with weight $0$, $$\mathbf{B}_\alpha=\{[\mathcal{L}]\,\,|\,\,\mathcal{L}\in\mathcal{I}_{w,\alpha} \textrm{ is simple}\}$$ and $\mathbf{B}=\sqcup_{\alpha}\mathbf{B}_\alpha$.

\begin{Prop}
As the $\mathbb{A}$-submodule of $\mathbf{K}_{w}$, $\mathbf{I}_{w}$ is free and has $\mathbf{B}$ as a basis.
\end{Prop}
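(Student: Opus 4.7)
The plan is to establish the two standard ingredients of a basis statement---spanning of $\mathbf{I}_w$ by $\mathbf{B}$ over $\mathbb{A}$, and $\mathbb{A}$-linear independence of $\mathbf{B}$---both derived from the structure theory of $\iota$-mixed sheaves in \cite{BBD} together with the preceding proposition for $\mathbf{K}_w$.

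First I would prove the inclusion $\mathbf{I}_w\subseteq\mathbb{A}\cdot\mathbf{B}$. Given $\mathcal{F}\in\mathcal{D}^b_{im,G_\alpha,w}(\mathbb{E}_\alpha)$, the identity $[\mathcal{F}]=\sum_i(-1)^i[{}^pH^i(\mathcal{F})]$ in the Grothendieck group reduces to the case of an $\iota$-mixed perverse sheaf of integer weights. The canonical weight filtration exhibits any such perverse sheaf as a finite iterated extension of $\iota$-pure perverse sheaves of integer weights, and the semisimplicity theorem of Beilinson--Bernstein--Deligne expresses each $\iota$-pure piece as a finite direct sum of simple $\iota$-pure perverse sheaves of the same integer weight. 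Consequently $[\mathcal{F}]$ is a $\mathbb{Z}$-linear combination of classes $[\mathcal{L}]$ for simple $\iota$-pure perverse sheaves $\mathcal{L}$ of integer weight.

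Next I would transport each such $\mathcal{L}$ of weight $w\in\mathbb{Z}$ to an element of $\mathbf{B}$ by the $\mathbb{A}$-action. The identity $[\mathcal{L}[1]]=-[\mathcal{L}]$ in $K_0$ together with the definition $v[\mathcal{L}]=[\mathcal{L}[1](1/2)]$ yields $v^n[\mathcal{L}]=(-1)^n[\mathcal{L}(n/2)]$ for every $n\in\mathbb{Z}$. Since $\mathcal{L}(w/2)$ is a simple $\iota$-pure perverse sheaf of weight $0$, this gives $[\mathcal{L}]=(-1)^w v^{-w}[\mathcal{L}(w/2)]\in\mathbb{A}\cdot\mathbf{B}$. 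The reverse inclusion $\mathbb{A}\cdot\mathbf{B}\subseteq\mathbf{I}_w$ is immediate: the operator $v$ combines a shift by $1$ with a Tate half-twist, and so preserves weight, hence preserves the property of being $\iota$-mixed of integer weights.

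For linear independence I would appeal to the preceding proposition. Two simple perverse sheaves of weight $0$ that lie in the same Tate twist equivalence class must be equal, since a nontrivial Tate twist of order $k\neq 0$ shifts the weight by $-2k$. Hence the elements of $\mathbf{B}$ represent pairwise distinct Tate twist equivalence classes, so $\mathbf{B}$ can be regarded as a subset of a set $\mathbf{B}_w$ of representatives, and the preceding proposition then forces $\mathbf{B}$ to be $\mathbb{A}$-linearly independent in $\mathbf{K}_w$ and therefore in $\mathbf{I}_w$. The only delicate point is the bookkeeping around $v$, Tate twists and the sign $(-1)^n$ from shifts in the Grothendieck group, which must be handled carefully so that the $\mathbb{A}$-orbit of an element of $\mathbf{B}$ is unambiguously identified with the Tate twist orbit of the corresponding weight-$0$ simple perverse sheaf; once this is in place the proof reduces to a direct application of BBD and the preceding proposition.
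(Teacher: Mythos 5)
The paper states this proposition without proof, so there is no internal argument to compare against; your proof is a sensible reconstruction of what the authors presumably had in mind. The overall structure---(i) reduce $[\mathcal{F}]$ to classes of simple $\iota$-pure perverse sheaves of integer weight via $[\mathcal{F}]=\sum_i(-1)^i[{}^pH^i(\mathcal{F})]$ and filtration, (ii) normalize the weight to $0$ using $v^n[\mathcal{L}]=(-1)^n[\mathcal{L}(n/2)]$, (iii) deduce linear independence from the freeness of $\mathbf{K}_w$ over $\mathbb{A}$ with basis $\mathbf{B}_w$---is correct and is the standard route.

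Two points of bookkeeping you should tighten. First, where you invoke ``the semisimplicity theorem of Beilinson--Bernstein--Deligne'' to split an $\iota$-pure perverse sheaf into simples: Gabber's theorem gives semisimplicity only after pullback to $\overline{\mathbb{F}}_q$, not necessarily as a Weil perverse sheaf. What you actually need, and what suffices for a Grothendieck-group identity, is a Jordan--H\"older filtration together with the fact (BBD 5.3.4) that a simple mixed perverse sheaf is pure; then the integrality of the weights of the JH constituents follows because they refine the weight filtration. Phrasing it via Jordan--H\"older avoids the semisimplicity caveat entirely. Second, you should also note, even if briefly, that each ${}^pH^i(\mathcal{F})$ remains $\iota$-mixed of integer weights when $\mathcal{F}$ is; this is a standard stability statement but it is the hinge of your first reduction. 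With these two clarifications the argument is complete.
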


The basis $\mathbf{B}$ is called the canonical basis of $\mathbf{I}_{w}$.

\begin{theorem}\label{sub_I_K}
The $\mathbb{A}$-module $\mathbf{I}_{w}$ is a subring of $\mathbf{K}_{w}$ such that $\Delta_{im}\circ \mathfrak{m}=\mathfrak{m}\circ\Delta_{im}$ where $\Delta_{im}$ is the natural restriction of $\Delta$.
\end{theorem}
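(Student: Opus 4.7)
The plan is to reduce the theorem to two stability statements, namely that both $\mathfrak{m}$ and $\Delta$ preserve $\mathbf{I}_w$, and then to inherit the compatibility from Theorem \ref{thm_main}. Concretely, I will verify: (i) $\mathfrak{m}_{\alpha,\beta}$ sends $\mathbf{I}_w \otimes \mathbf{I}_w$ into $\mathbf{I}_w$; (ii) $\Delta_{\alpha,\beta}$ sends $\mathbf{I}_w$ into $\mathbf{I}_w \otimes \mathbf{I}_w$; and (iii) the equality $\Delta_{im} \circ \mathfrak{m} = \mathfrak{m} \circ \Delta_{im}$.

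For (i), I unpack $\mathbf{m}_{\alpha,\beta} = (\mathfrak{p}_3)_! \circ (\mathfrak{p}_2)_b \circ \mathfrak{p}_1^*$ and argue one functor at a time. For $\mathcal{L}_1 \in \md^b_{im, G_\alpha, w}(\mathbb{E}_\alpha)$ and $\mathcal{L}_2 \in \md^b_{im, G_\beta, w}(\mathbb{E}_\beta)$, the external product $\mathcal{L}_1 \boxtimes \mathcal{L}_2$ is $\iota$-mixed of integer weight (weights add). The smooth pullback $\mathfrak{p}_1^*$ preserves $\iota$-mixedness and shifts weights only by integers; the equivalence $(\mathfrak{p}_2)_b$ attached to the principal $G_\alpha \times G_\beta$-bundle $\mathfrak{p}_2$ also shifts weights only by integers; and since $\mathfrak{p}_3$ is proper, $(\mathfrak{p}_3)_!$ preserves $\iota$-mixedness by Deligne's purity theorem (Weil II) and keeps integer weights integer.

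For (ii), the same scheme applies to $\mathbf{\Delta}_{\alpha,\beta} = \kappa_! i^* [(-2)\sum_{i}\alpha_i\beta_i](-\sum_{i}\alpha_i\beta_i)$: the closed-immersion pullback $i^*$ preserves $\iota$-mixedness and integer weights; the vector-bundle pushforward $\kappa_!$ preserves $\iota$-mixedness and shifts weights only by the integer relative dimension; and the explicit shift by $-2\sum_i \alpha_i\beta_i$ together with the Tate twist by $-\sum_i \alpha_i\beta_i$ are both integer operations. Thus $\mathbf{\Delta}_{\alpha,\beta}$ sends integer-weight $\iota$-mixed complexes to integer-weight $\iota$-mixed complexes, yielding (ii).

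For (iii), once (i) and (ii) are established, $\Delta_{im}$ and $\mathfrak{m}$ on $\mathbf{I}_w$ are literally the restrictions of $\Delta$ and $\mathfrak{m}$ from $\mathbf{K}_w$, so it suffices to prove $\Delta \circ \mathfrak{m} = \mathfrak{m} \circ \Delta$ on all of $\mathbf{K}_w$. This is the Grothendieck-group shadow of Theorem \ref{thm_main}: the theorem gives the equality on simple perverse sheaves, and since such sheaves (together with their integer shifts and Tate twists) span $\mathbf{K}_w$ over $\mathbb{A}$, and both sides of the identity are additive in each input and commute with shifts and Tate twists, the identity propagates to the whole ring. The only real obstacle is the bookkeeping in (i)--(ii)---checking that every shift and Tate twist in every normalization is by an integer---and there is no new geometric or categorical input beyond the stability of integer-weight $\iota$-mixedness under the standard functors.
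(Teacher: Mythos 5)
Your proposal is correct and takes essentially the same route as the paper: closure of $\mathbf{I}_w$ under $\mathfrak{m}$ and $\Delta$ comes from the stability of integer-weight $\iota$-mixed complexes under the standard operations, and the compatibility $\Delta_{im}\circ\mathfrak{m}=\mathfrak{m}\circ\Delta_{im}$ is then inherited from Theorem \ref{thm_main} (whose proof already establishes the identity at the level of $\mathbf{K}_w$ via injectivity of $\chi$ and the Green formula). The only presentational difference is that the paper obtains (i) and (ii) in one stroke by citing Kiehl--Weissauer \cite[I.9, II.12]{KW} for the commutativity of the restriction diagrams, whereas you re-derive the same closure by unwinding $\mathbf{m}_{\alpha,\beta}=(\mathfrak{p}_3)_!(\mathfrak{p}_2)_b\mathfrak{p}_1^*$ and $\mathbf{\Delta}_{\alpha,\beta}=\kappa_!i^*[\cdot](\cdot)$ functor by functor and invoking Weil~II; this is exactly the content of those references, so the arguments coincide in substance.
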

\begin{proof}
Due to \cite[I.9, II.12]{KW}, we have the following commutative diagrams
$$
\xymatrix{\mathcal{D}^b_{im, G_{\alpha}\times G_{\beta}, w}(\mathbb{E}_{\alpha}\times \mathbb{E}_{\beta})\ar[d]\ar[r]^-{\mathbf{m}_{im,\alpha, \beta}}&\mathcal{D}_{im, G_{\alpha+\beta}, w}(\mathbb{E}_{\alpha+\beta})\ar[d] \\
\mathcal{D}^b_{G_{\alpha}\times G_{\beta}, w}(\mathbb{E}_{\alpha}\times \mathbb{E}_{\beta})\ar[r]^-{\mathbf{m}_{\alpha, \beta}}&\mathcal{D}_{G_{\alpha+\beta}, w}(\mathbb{E}_{\alpha+\beta})}
$$
and
$$
\xymatrix{\mathcal{D}_{im, G_{\alpha+\beta}, w}(\mathbb{E}_{\alpha+\beta})\ar[d]\ar[r]^-{\mathbf{\Delta}_{im,\alpha, \beta}}&\mathcal{D}^b_{im, G_{\alpha}\times G_{\beta}, w}(\mathbb{E}_{\alpha}\times \mathbb{E}_{\beta})\ar[d] \\
\mathcal{D}_{G_{\alpha+\beta}, w}(\mathbb{E}_{\alpha+\beta})\ar[r]^-{\mathbf{\Delta}_{\alpha, \beta}}&\mathcal{D}^b_{G_{\alpha}\times G_{\beta}, w}(\mathbb{E}_{\alpha}\times \mathbb{E}_{\beta}),}
$$
where $\mathbf{m}_{im,\alpha, \beta}$ and $\mathbf{\Delta}_{im,\alpha, \beta}$ are the natural restriction of $\mathbf{m}_{\alpha, \beta}$ and $\mathbf{\Delta}_{\alpha, \beta}$, respectively.
Hence the $\mathbb{A}$-module $\mathbf{I}_{w}$ is a subring of $\mathbf{K}_{w}$ such that $\Delta_{im}\circ \mathfrak{m}=\mathfrak{m}\circ\Delta_{im}$.
\end{proof}

Set ${_{\mathbb{Q}(v)}\mathbf{K}}_{w}=\mathbf{K}_{w}\otimes_{\mathbb{Z}[v, v^{-1}]}\mathbb{Q}(v)$ and ${_{\mathbb{Q}(v)}\mathbf{I}}_{w}=\mathbf{I}_{w}\otimes_{\mathbb{Z}[v, v^{-1}]}\mathbb{Q}(v)$

\begin{theorem}\label{closed_multi}
${_{\mathbb{Q}(v)}\mathbf{I}}_{w}$ is a  subalgebra of ${_{\mathbb{Q}(v)}\mathbf{K}}_{w}$ such that $\Delta_{im}\circ \mathfrak{m}=\mathfrak{m}\circ\Delta_{im}$.
\end{theorem}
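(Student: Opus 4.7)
The plan is to deduce Theorem \ref{closed_multi} directly from Theorem \ref{sub_I_K} by base change from $\mathbb{A}=\mathbb{Z}[v,v^{-1}]$ to $\mathbb{Q}(v)$. Since $\mathbb{Q}(v)$ is obtained from $\mathbb{A}$ first by tensoring with $\mathbb{Q}$ and then by localizing at all nonzero polynomials, it is flat (indeed, even a localization) over $\mathbb{A}$. Consequently, tensoring the injection $\mathbf{I}_w\hookrightarrow \mathbf{K}_w$ with $\mathbb{Q}(v)$ preserves injectivity, and we may view ${_{\mathbb{Q}(v)}\mathbf{I}}_w$ as an honest $\mathbb{Q}(v)$-submodule of ${_{\mathbb{Q}(v)}\mathbf{K}}_w$.

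Next, I would observe that the multiplication $\mathfrak{m}$ and the comultiplication $\Delta$ on $\mathbf{K}_w$ are $\mathbb{A}$-linear (they commute with Tate twist and shift by construction), so they extend uniquely and functorially to $\mathbb{Q}(v)$-linear maps on ${_{\mathbb{Q}(v)}\mathbf{K}}_w$ and on ${_{\mathbb{Q}(v)}\mathbf{K}}_w\otimes_{\mathbb{Q}(v)}{_{\mathbb{Q}(v)}\mathbf{K}}_w$. Via flatness one has the canonical identification
\[
(\mathbf{I}_w\otimes_{\mathbb{A}}\mathbf{I}_w)\otimes_{\mathbb{A}}\mathbb{Q}(v)\;\cong\;{_{\mathbb{Q}(v)}\mathbf{I}}_w\otimes_{\mathbb{Q}(v)}{_{\mathbb{Q}(v)}\mathbf{I}}_w,
\]
so the statement that $\mathfrak{m}$ sends $\mathbf{I}_w\otimes_{\mathbb{A}}\mathbf{I}_w$ into $\mathbf{I}_w$ immediately yields that the extended $\mathfrak{m}$ sends ${_{\mathbb{Q}(v)}\mathbf{I}}_w\otimes{_{\mathbb{Q}(v)}\mathbf{I}}_w$ into ${_{\mathbb{Q}(v)}\mathbf{I}}_w$, establishing the subalgebra property.

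Finally, the compatibility identity $\Delta_{im}\circ \mathfrak{m}=\mathfrak{m}\circ\Delta_{im}$ is an equality of $\mathbb{A}$-linear maps $\mathbf{I}_w\otimes_{\mathbb{A}}\mathbf{I}_w\to \bigoplus_{\lambda}\mathbf{I}_{w}^{(\lambda)}$ (with $\lambda$ running over the relevant splittings as in Theorem \ref{thm_main}) that holds by Theorem \ref{sub_I_K}. Both sides are $\mathbb{A}$-linear, hence their $\mathbb{Q}(v)$-linear extensions to the tensored spaces agree as well, which gives the desired identity over ${_{\mathbb{Q}(v)}\mathbf{I}}_w$.

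I do not expect a genuine obstacle here: the content of the theorem is already carried by Theorem \ref{sub_I_K}, and the only point to verify is that base change from $\mathbb{A}$ to $\mathbb{Q}(v)$ preserves injections, tensor products, and linear identities — each of which follows from flatness of $\mathbb{Q}(v)$ over $\mathbb{A}$. The closest thing to a subtlety is checking that the comultiplication, which a priori takes values in a direct sum of Grothendieck groups indexed by decompositions of the total dimension vector, really identifies with the tensor square after passing to $\mathbb{Q}(v)$-coefficients; but this is nothing more than the standard Künneth-type splitting used throughout Section 4, applied degree by degree.
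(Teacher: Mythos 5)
Your proposal is correct and takes the same route the paper (implicitly) takes: the paper gives no proof for Theorem \ref{closed_multi}, treating it as an immediate consequence of Theorem \ref{sub_I_K} by base change along the flat extension $\mathbb{A}\hookrightarrow\mathbb{Q}(v)$. Your spelling-out of the flatness, preservation of injections, identification of tensor products, and persistence of the linear identity $\Delta_{im}\circ\mathfrak{m}=\mathfrak{m}\circ\Delta_{im}$ is exactly the content being appealed to.
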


We also consider the twisted version of $\mathbf{K}_{w}$ in order to preserve the subcategories of perverse sheaves by defining
$$
\mathfrak{m}_{\alpha,\beta}^t=\mathfrak{m}_{\alpha,\beta}[\{\alpha, \beta\}](\frac{\{\alpha, \beta\}}{2}),
$$
where $\{\alpha, \beta\}=\sum_{i\in Q_0}\alpha_i\beta_i+\sum_{h\in Q_1}\alpha_{s(h)}\beta_{t(h)}$
and
$$
\Delta_{\alpha, \beta}^t=\Delta_{\alpha,\beta}[-\langle\alpha, \beta\rangle](-\frac{\langle\alpha,\beta\rangle}{2}).
$$
We denote by $\mathbf{K}_{w}^{tw}$ the twist of $\mathbf{K}_{w}$ with the multiplication and comultiplication induced by $\mathfrak{m}_{\alpha,\beta}^t$ and $\Delta_{\alpha, \beta}^t$, respectively.
For $\mathcal{M}\in \mathcal{D}_{G_{\alpha},w}^b(\mathbb{E}_{\alpha})$ and $\mathcal{N}\in \mathcal{D}_{G_{\beta},w}^b(\mathbb{E}_{\beta})$, define $[\mathcal{M}]\cdot[\mathcal{N}]:=\mathfrak{m}^t([\mathcal{M}\boxtimes \mathcal{N}]).$
Similarly, denote by $\mathbf{I}_{w}^{tw}$ the twist of $\mathbf{I}_{w}$.

The following lemma is a simple generalization of Lusztig's construction over quantum groups (\cite[Theorem 3.24]{Schiffmann2}).
\begin{lemma}\label{bialgebra}
There is a ring homomorphism $\chi^{F, tw}: \mathbf{K}_{w}^{tw}\rightarrow \mathcal{CF}^{F, tw}(Q)$ by sending $[\mathcal{L}]$ for $\mathcal{L}\in \mathcal{D}^b_{G_{\alpha},w}(\mathbb{E}_{\alpha})$ to $v^{\dim G_{\alpha}}\chi^F(\mathcal{L})$ such that $\delta^{t}\circ \chi^{F, tw}=\chi^{F, tw}\circ\Delta^t.$
\end{lemma}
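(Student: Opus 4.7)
The plan is to reduce the twisted claim to the already-established untwisted compatibilities given by Propositions \ref{algebrahomo} and \ref{coalghomomorphism}. Since $\mathbf{K}_w^{tw}$ and $\mathcal{CF}^{F,tw}(Q)$ share the same underlying abelian groups as $\mathbf{K}_w$ and $\mathcal{CF}^{F}(Q)$, the formula $\chi^{F,tw}([\mathcal{L}])=v_q^{\dim G_\alpha}\chi^F(\mathcal{L})$ for $\mathcal{L}\in\mathcal{D}^b_{G_\alpha,w}(\mathbb{E}_\alpha)$ is manifestly well defined: additivity on distinguished triangles is inherited from $\chi^F$ via Theorem \ref{tool}(1), and multiplication by the weight-space constant $v_q^{\dim G_\alpha}$ preserves all relations. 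It then remains to verify the two structural compatibilities.

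For the ring-homomorphism property, I would take $\mathcal{L}_1\in\mathcal{D}^b_{G_\alpha,w}(\mathbb{E}_\alpha)$ and $\mathcal{L}_2\in\mathcal{D}^b_{G_\beta,w}(\mathbb{E}_\beta)$, unwind
\[
[\mathcal{L}_1]\cdot[\mathcal{L}_2]=[\mathfrak{m}_{\alpha,\beta}(\mathcal{L}_1\boxtimes\mathcal{L}_2)[\{\alpha,\beta\}](\{\alpha,\beta\}/2)]
\]
using the definition of the twisted induction, and apply $\chi^{F,tw}$. Theorem \ref{tool}(3) turns the shift and Tate twist into a scalar power of $v_q$; Proposition \ref{algebrahomo} then rewrites $\chi^F(\mathfrak{m}_{\alpha,\beta}(\mathcal{L}_1\boxtimes\mathcal{L}_2))$ as $\chi^F(\mathcal{L}_1)*\chi^F(\mathcal{L}_2)$. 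On the target side the twisted product contributes an extra $v_q^{\langle\alpha,\beta\rangle}$ factor, which combines with the prefactors $v_q^{\dim G_\alpha}v_q^{\dim G_\beta}$. Equating the two sides reduces to the single numerical identity
\[
\dim G_{\alpha+\beta}-\dim G_\alpha-\dim G_\beta=2\sum_{i\in Q_0}\alpha_i\beta_i=\{\alpha,\beta\}+\langle\alpha,\beta\rangle,
\]
which is immediate from $\dim G_\gamma=\sum_i\gamma_i^2$ and the definitions of $\{\,,\,\}$ and $\langle\,,\,\rangle$. The intertwining $\delta^t\circ\chi^{F,tw}=\chi^{F,tw}\circ\Delta^t$ is handled by a completely parallel computation, invoking Proposition \ref{coalghomomorphism} in place of Proposition \ref{algebrahomo}, using the definitions $\Delta^t_{\alpha,\beta}=\Delta_{\alpha,\beta}[-\langle\alpha,\beta\rangle](-\langle\alpha,\beta\rangle/2)$ and $\delta^t_{\alpha,\beta}=v_q^{\langle\alpha,\beta\rangle}\delta_{\alpha,\beta}$, and closing with the same dimensional identity.

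The main subtlety I anticipate is bookkeeping of signs arising from the half-integral Tate twist $(1/2)$: one must fix a square root of $\overline{\mathbb{Q}}_l(1)$ so that $\chi^F([\mathcal{F}(1/2)])=v_q^{-1}\chi^F(\mathcal{F})$ holds on the nose, and then verify that under this convention the parity factors $(-1)^{\{\alpha,\beta\}}$ and $(-1)^{\langle\alpha,\beta\rangle}$ coming from the integer shifts are absorbed uniformly into the $v_q$-exponents. Once that normalization is in place, the rest of the argument is pure exponent matching as above. As a consistency check, restricting $\chi^{F,tw}$ to the semisimple classes appearing in Lemma \ref{coincide} should recover Lusztig's well-known identification for the composition subalgebra (compare \cite[Theorem~3.24]{Schiffmann2}), which confirms that the scaling $v_q^{\dim G_\alpha}$ is the correct one.
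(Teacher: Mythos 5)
Your proposal is correct, and it is exactly the argument the paper implicitly has in mind: the paper supplies no proof for this lemma, simply calling it ``a simple generalization of Lusztig's construction over quantum groups'' and citing \cite[Theorem 3.24]{Schiffmann2}. Your reduction to the untwisted statements (Propositions \ref{algebrahomo} and \ref{coalghomomorphism}), combined with Theorem \ref{tool}(3) to convert the shifts and Tate twists entering $\mathfrak{m}^t_{\alpha,\beta}=\mathfrak{m}_{\alpha,\beta}[\{\alpha,\beta\}](\{\alpha,\beta\}/2)$ and $\Delta^t_{\alpha,\beta}=\Delta_{\alpha,\beta}[-\langle\alpha,\beta\rangle](-\langle\alpha,\beta\rangle/2)$ into powers of $v$, is the right strategy, and the crux is precisely the numerical identity you isolate,
$$\dim G_{\alpha+\beta}-\dim G_\alpha-\dim G_\beta=2\sum_{i\in Q_0}\alpha_i\beta_i=\{\alpha,\beta\}+\langle\alpha,\beta\rangle,$$
which follows from $\dim G_\gamma=\sum_i\gamma_i^2$, $\{\alpha,\beta\}=\sum_i\alpha_i\beta_i+\sum_{h\in Q_1}\alpha_{s(h)}\beta_{t(h)}$, and $\langle\alpha,\beta\rangle=\sum_i\alpha_i\beta_i-\sum_{h\in Q_1}\alpha_{s(h)}\beta_{t(h)}$.

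The one thing I would press you to settle rather than merely flag is the sign issue: Theorem \ref{tool}(3) gives $\chi^F_{\mathcal{K}[d](d/2)}=(-1)^d q^{-d/2}\chi^F_{\mathcal{K}}=(-\sqrt{q}^{\,-1})^d\chi^F_{\mathcal{K}}$, so if one naively uses $v=v_q=\sqrt{q}$ everywhere, the exponent-matching leaves a stray $(-1)^{\{\alpha,\beta\}}$ on the multiplication side (and a companion sign on the comultiplication side). The resolution, consistent with what the paper does explicitly in Section 6 (where the specialization is $v\mapsto(-\sqrt{q})^s$), is to take $v=-\sqrt{q}$, so that $\chi^F([\mathcal{L}[d](d/2)])=v^{-d}\chi^F([\mathcal{L}])$ exactly; under this convention your exponent computation closes with no residual sign. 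With that normalization made explicit, the argument is complete and matches what the paper intends.
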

\begin{Cor}The ring  homomorphism
$$\chi^{tw}=\prod_{n\in \mathbb{N}}\chi^{F^n, tw}: \mathbf{K}_{w}^{tw}\rightarrow \prod_{n\in \mathbb{N}}\mathcal{CF}^{F^n, tw}(Q)$$ is an injective homomorphism satisfying $\delta^{t}\circ \chi^{F^n, tw}=\chi^{F^n, tw}\circ\Delta^t$ for $n\in \mathbb{N}.$
\end{Cor}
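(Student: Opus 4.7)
The plan is to bootstrap from Lemma \ref{bialgebra} and Theorem \ref{injective}, combining them componentwise over $n$. First, I would observe that the entire construction yielding Lemma \ref{bialgebra}, including Lemmas \ref{multiplication}, \ref{comultiplication}, Theorem \ref{green-geometry}, the compatibility diagrams such as (\ref{commutative-diagran-mult1}), and the sheaf-theoretic ingredients in Theorem \ref{tool}, is symmetric in the choice of base field. Replacing $\mathbb{F}_q$ by $\mathbb{F}_{q^n}$ and the Frobenius $F$ by $F^n$ throughout, the same argument produces, for every $n\in\mathbb{N}$, a ring homomorphism
\[
\chi^{F^n,tw}:\mathbf{K}_w^{tw}\rightarrow \mathcal{CF}^{F^n,tw}(Q),\qquad [\mathcal{L}]\mapsto v_{q^n}^{\dim G_\alpha}\chi^{F^n}(\mathcal{L})
\]
satisfying the compatibility $\delta^{t}\circ\chi^{F^n,tw}=\chi^{F^n,tw}\circ\Delta^t$. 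The verification is parallel to the proof of Lemma \ref{bialgebra}; nothing in the geometric argument is specific to $n=1$.

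Next, I would assemble these into the product map $\chi^{tw}=\prod_{n\in\mathbb{N}}\chi^{F^n,tw}$, equipping the target $\prod_{n\in\mathbb{N}}\mathcal{CF}^{F^n,tw}(Q)$ with the product ring structure and product comultiplication acting component-wise. Since each factor is a ring homomorphism intertwining $\Delta^t$ and $\delta^t$, so is the product, which yields the asserted compatibility in the corollary.

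For injectivity, the key point is that the twisted and untwisted maps differ only by an invertible weight-graded scalar. Explicitly, on each weight component $K_{G_\alpha,w}(\mathbb{E}_\alpha)$ the map $\chi^{F^n,tw}$ equals $v_{q^n}^{\dim G_\alpha}\chi^{F^n}$, and the scalar $v_{q^n}^{\dim G_\alpha}\in\overline{\mathbb{Q}}_l^{\times}$ is a unit. Hence the component-wise kernel of $\chi^{tw}$ coincides with that of the untwisted map $\chi=\prod_{n}\chi^{F^n}$, which vanishes by Theorem \ref{injective}. Since $\mathbf{K}_w$ decomposes as a direct sum over dimension vectors and $\chi^{tw}$ respects this grading, injectivity on each weight component implies injectivity of $\chi^{tw}$.

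I do not expect any genuine obstacle: the corollary is formally extracted from previously established componentwise statements. The only technical point worth double-checking is that the scalar twists $v_{q^n}^{\dim G_\alpha}$ are compatible with the twisted multiplication $\mathfrak{m}^t_{\alpha,\beta}$ and twisted comultiplication $\Delta^t_{\alpha,\beta}$ in the sense already verified in Lemma \ref{bialgebra}; since the proof of Lemma \ref{bialgebra} is insensitive to whether one works over $\mathbb{F}_q$ or $\mathbb{F}_{q^n}$, this extends immediately.
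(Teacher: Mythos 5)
Your proposal is correct and follows exactly the route the paper intends: the corollary is stated without proof as a direct consequence of Lemma \ref{bialgebra} (extended verbatim from $F$ to each $F^n$) combined with Theorem \ref{injective}, and your observation that the twist by the invertible scalar $v_{q^n}^{\dim G_\alpha}$ on each weight component identifies the kernel of $\chi^{tw}$ with that of the untwisted $\chi$ is precisely the point that makes the injectivity transfer immediate.
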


We will endow $\mathbf{K}^{tw}_{w}$ with the structure of the antipode map as an analogue of the antipode over a Ringel-Hall algebra.
Given dimension vectors $\alpha_1,\alpha_2,\alpha_3$, consider the following diagram
$$
\xymatrix{\mathbb{E}_{\alpha_2}\times\mathbb{E}_{\alpha_3}&\mathbb{E}'\ar[r]^{p_2}\ar[l]_--{p_1}&\mathbb{E}^{''}\ar[r]^{p_3}&\mathbb{E}_{\alpha_2+\alpha_3}}.
$$
This induces the following diagram
$$
\xymatrix{\mathbb{E}_{\alpha_1}\times\mathbb{E}_{\alpha_2}\times \mathbb{E}_{\alpha_3}&\mathbb{E}_{\alpha_1}\times\mathbb{E}'\ar[r]^{(1, p_2)}\ar[l]_--{(1, p_1)}&\mathbb{E}_{\alpha_1}\times\mathbb{E}^{''}\ar[r]^-{(1, p_3)}&\mathbb{E}_{\alpha_1}\times\mathbb{E}_{\alpha_2+\alpha_3}}.
$$
Then we obtain a functor $$1\hat{\otimes}\mathbf{m}_{\alpha_2,\alpha_3}: \mathcal{D}^b_{G_{\alpha_1}\times G_{\alpha_2}\times G_{\alpha_3},w}(\mathbb{E}_{\alpha_1}\times\mathbb{E}_{\alpha_2}\times \mathbb{E}_{\alpha_3}) \rightarrow\mathcal{D}^b_{G_{\alpha_1}\times G_{\alpha_2+\alpha_3},w}(\mathbb{E}_{\alpha_1}\times\mathbb{E}_{\alpha_2+\alpha_3}).$$
Similarly, one can define the functor $\mathbf{m}_{\alpha_2,\alpha_3}\hat{\otimes}1.$ Now, we can define the $r$-fold multiplication inductively by setting
$$\mathbf{m}^2_{\alpha_2,\alpha_3}=\mathbf{m}_{\alpha_2,\alpha_3}, \quad \mathbf{m}^r_{\alpha_1,\cdots,\alpha_r}=\mathbf{m}_{\alpha_1, \alpha_2+\cdots+\alpha_r}\circ(1\hat{\otimes}\mathbf{m}^{r-1}_{\alpha_2,\cdots,\alpha_r}).$$
for $r>2.$

In the same way, we inductively define the $r$-fold comultiplication. Consider the following diagram
$$
\xymatrix{\mathbb{E}_{\alpha_1}\times\mathbb{E}_{\alpha_2}&F_{\alpha_1,\alpha_2}\ar[l]_-{\kappa}\ar[r]^{i}&\mathbb{E}_{\alpha_1+\alpha_2}}.
$$
This induces the diagram
$$
\xymatrix{\mathbb{E}_{\alpha_1}\times\mathbb{E}_{\alpha_2}\times\mathbb{E}_{\alpha_3}&F_{\alpha_1,\alpha_2}\times\mathbb{E}_{\alpha_3}\ar[l]_-{(\kappa, 1)}\ar[r]^{(i,1)}&\mathbb{E}_{\alpha_1+\alpha_2}\times \mathbb{E}_{\alpha_3}}.
$$
Then we obtain the functor
$$
\mathbf{\Delta}_{\alpha_1,\alpha_2}\hat{\otimes}1:\mathcal{D}^b_{G_{\alpha_1+\alpha_2}\times G_{\alpha_3},w}(\mathbb{E}_{\alpha_1+\alpha_2}\times\mathbb{E}_{\alpha_3}) \rightarrow\mathcal{D}^b_{G_{\alpha_1}\times G_{\alpha_2}\times G_{\alpha_3},w}(\mathbb{E}_{\alpha_1}\times\mathbb{E}_{\alpha_2}\times \mathbb{E}_{\alpha_3}) .
$$
and then the functor $1\hat{\otimes}\mathbf{\Delta}_{\alpha_1,\alpha_2}$ similarly. The $r$-fold comultiplication can be defined inductively: $$\mathbf{\Delta}^2_{\alpha_1,\alpha_2}=\mathbf{\Delta}_{\alpha_1,\alpha_2}, \quad \mathbf{\Delta}^{r}_{\alpha_1,\cdots,\alpha_r}=(\mathbf{\Delta}^{r-1}_{\alpha_1,\cdots,\alpha_{r-1}}\hat{\otimes}1)\circ \mathbf{\Delta}_{\alpha_1+\cdots+\alpha_{r-1},\alpha_r}$$
for $r>2.$
Explicitly, we have the following functors:
$$
\mathbf{m}^r_{\alpha_1,\cdots,\alpha_r}:\mathcal{D}^b_{\prod_{i=1}^rG_{\alpha_i}, w}(\prod_{i=1}^r\mathbb{E}_{\alpha_i})\rightarrow \mathcal{D}^{b}_{G_{\alpha_1+\cdots+\alpha_r},w}(\mathbb{E}_{\alpha_1+\cdots+\alpha_r})
$$
and
$$
\mathbf{\Delta}^r_{\alpha_1,\cdots,\alpha_r}:\mathcal{D}^{b}_{G_{\alpha_1+\cdots+\alpha_r},w}(\mathbb{E}_{\alpha_1+\cdots+\alpha_r})\rightarrow \mathcal{D}^b_{\prod_{i=1}^rG_{\alpha_i}, w}(\prod_{i=1}^r\mathbb{E}_{\alpha_i}).
$$


Let $\mathbf{m}^{t,r}$ and $\mathbf{\Delta}^{t,r}$ be the twist versions of $\mathbf{m}^{r}$ and $\mathbf{\Delta}^{r}$, respectively. Now we define the functor $\mathbf{S}:\mathcal{D}^b_{G_{\alpha},w}(\mathbb{E}_{\alpha})\rightarrow \mathcal{D}^b_{G_{\alpha},w}(\mathbb{E}_{\alpha})$ by setting
$$
\mathbf{S}(\mathcal{L})=\bigoplus_{r\geq 1}\bigoplus_{\alpha_1,\cdots,\alpha_r\neq 0}\mathbf{m}^{t,r}_{\alpha_1,\cdots,\alpha_r}\circ\mathbf{\Delta}^{t,r}_{\alpha_1,\cdots,\alpha_r}(\mathcal{L})[r] \mbox{ for $[\mathcal{L}]\neq[\mathbf{1}_0]$ and } \mathbf{S}(\mathbf{1}_0)=\mathbf{1}_0.
$$
where $\mathbf{1}_0$ is the constant sheaf on $\mathbb{E}_0$.
Denote by $S$ the induced map over  $K_{G_{\alpha}, w}(\mathbb{E}_{\alpha})$, even $\mathbf{K}^{tw}_{w}$.
\begin{lemma}
The map $\chi^{F,tw}$ satisfies that $\chi^{F, tw}(S([\mathcal{L}]))=\sigma^t(\chi^{F,tw}([\mathcal{L}]))$ for $\mathcal{L}\in \mathcal{D}^b_{G_{\alpha},w}(\mathbb{E}_{\alpha}).$
\end{lemma}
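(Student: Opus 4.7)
The plan is to decompose the defining expression for $\mathbf{S}$ into three ingredients---iterated twisted multiplication $\mathbf{m}^{t,r}$, iterated twisted comultiplication $\mathbf{\Delta}^{t,r}$, and the cohomological shift $[r]$---and to show that $\chi^{F,tw}$ intertwines each of them with the corresponding ingredient in the definition of $\sigma^t$: namely $\underline{m}^{t,r}$, $\delta^{t,r}$, and the sign $(-1)^r$. The two-fold cases are exactly the content of Lemma~\ref{bialgebra}: $\chi^{F,tw}$ is a ring homomorphism, so $\chi^{F,tw}\circ\mathfrak{m}^t_{\alpha,\beta}=\underline{m}^t_{\alpha,\beta}\circ(\chi^{F,tw}\otimes\chi^{F,tw})$, and it also satisfies $\delta^t\circ\chi^{F,tw}=\chi^{F,tw}\circ\Delta^t$.

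I would then extend both statements to arbitrary $r$ by induction. Using the inductive clause
\[
\mathbf{m}^{t,r}_{\alpha_1,\ldots,\alpha_r}=\mathbf{m}^t_{\alpha_1,\alpha_2+\cdots+\alpha_r}\circ(1\hat{\otimes}\mathbf{m}^{t,r-1}_{\alpha_2,\ldots,\alpha_r})
\]
and its parallel for $\underline{m}^{t,r}$, each inductive step applies the base case on the outermost factor and the inductive hypothesis on the remaining $(r-1)$-fold piece. The comultiplicative side is handled identically, with the minor caveat that the sheaf-theoretic $\mathbf{\Delta}^{t,r}$ is written in Section~4 as a left-end induction while the function-theoretic $\delta^{t,r}$ is defined in Section~3 by a right-end induction; these yield the same iterated operation by coassociativity of $\mathbf{\Delta}^t$ and $\delta^t$, which can be read off the flag geometry. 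The outcome is
\[
\chi^{F,tw}\circ\mathbf{m}^{t,r}_{\alpha_1,\ldots,\alpha_r}=\underline{m}^{t,r}_{\alpha_1,\ldots,\alpha_r}\circ(\chi^{F,tw})^{\otimes r},\qquad \delta^{t,r}_{\alpha_1,\ldots,\alpha_r}\circ\chi^{F,tw}=\chi^{F,tw}\circ\mathbf{\Delta}^{t,r}_{\alpha_1,\ldots,\alpha_r}.
\]

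For the cohomological shift, Theorem~\ref{tool}(3) gives $\chi^F_{\mathcal{K}[r]}=(-1)^r\chi^F_{\mathcal{K}}$, and this passes to $\chi^{F,tw}$ unchanged because the rescaling factor $v^{\dim G_{\alpha}}$ depends only on the dimension vector, which is unaffected by shifts. Combining the three intertwining identities and summing term by term over $r\geq 1$ and tuples $(\alpha_1,\ldots,\alpha_r)$ with $\alpha_i\neq 0$, I obtain
\begin{align*}
\chi^{F,tw}(S([\mathcal{L}]))&=\sum_{r\geq 1}(-1)^r\sum_{\alpha_1,\ldots,\alpha_r\neq 0}\underline{m}^{t,r}_{\alpha_1,\ldots,\alpha_r}\circ\delta^{t,r}_{\alpha_1,\ldots,\alpha_r}(\chi^{F,tw}([\mathcal{L}]))\\
&=\sigma^t(\chi^{F,tw}([\mathcal{L}])),
\end{align*}
for $[\mathcal{L}]\neq[\mathbf{1}_0]$; the degenerate case $[\mathcal{L}]=[\mathbf{1}_0]$ is immediate since $\chi^{F,tw}(\mathbf{1}_0)=1_0$ and both $\mathbf{S}$ and $\sigma^t$ fix the unit.

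The hardest part is the bookkeeping of Tate twists and degree shifts baked into $\mathbf{m}^t_{\alpha,\beta}$ and $\mathbf{\Delta}^t_{\alpha,\beta}$: the shifts $[\{\alpha,\beta\}]$, $[-\langle\alpha,\beta\rangle]$ and their half-Tate twists must translate under $\chi^{F,tw}$ into precisely the scalars $v_q^{\langle\alpha,\beta\rangle}$ entering the twisted function-theoretic operations. However, this matching is exactly what is encoded in the promotion of $\chi^F$ to the ring homomorphism $\chi^{F,tw}$ of Lemma~\ref{bialgebra}, so it propagates mechanically through the induction and no further Tate-twist computation is needed at the higher-order steps.
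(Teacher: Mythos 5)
Your proposal is correct and follows essentially the same route as the paper's proof, which simply invokes the $r$-fold intertwining identities $\chi^{F,tw}\mathfrak{m}^{t,r}=\underline{m}^{t,r}\chi^{F,tw}$ and $\chi^{F,tw}\Delta^{t,r}=\delta^{t,r}\chi^{F,tw}$. In fact your writeup is somewhat more complete than the paper's one-line argument, since you spell out the induction from the two-fold case of Lemma~\ref{bialgebra} and explicitly record that the shift $[r]$ in the definition of $\mathbf{S}$ is converted into the sign $(-1)^r$ in $\sigma^t$ by Theorem~\ref{tool}(3), a step the paper leaves implicit.
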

\begin{proof}
This follows from $\chi^{F, tw}\mathfrak{m}^{t, r}=\underline{m}^{t,r}\chi^{F,tw}$ and $\chi^{F, tw}\Delta^{t, r}=\delta^{t,r}\chi^{F,tw}$.
\end{proof}
Hence, we call $S$ the antipode over $\mathbf{K}^{tw}_{w}$.

Define the functor
$\mathbf{S}\hat{\otimes}1:\mathcal{D}^b_{G_{\alpha},w}(\mathbb{E}_{\alpha}\times\mathbb{E}_{\beta})\rightarrow \mathcal{D}^b_{G_{\alpha},w}(\mathbb{E}_{\alpha}\times\mathbb{E}_{\beta})$ by setting
$$
\mathbf{S}\hat{\otimes}1(\mathcal{L})=\bigoplus_{r\geq 1}\bigoplus_{\alpha_1,\cdots,\alpha_r\neq 0}\mathbf{m}^{t,r}_{(\alpha_1,\beta),\cdots,(\alpha_r,0)}\circ\mathbf{\Delta}^{t,r}_{(\alpha_1,\beta),\cdots,(\alpha_r,0)}(\mathcal{L})[r]
$$
for $[\mathcal{L}]\neq[\mathbf{1}_0\otimes\mathcal{L}']$ and $\mathbf{S}\hat{\otimes}1(\mathbf{1}_0\otimes\mathcal{L}')=\mathbf{1}_0\otimes\mathcal{L}'$.
Similarly, we can define the functor
$$1\hat{\otimes}\mathbf{S}:\mathcal{D}^b_{G_{\alpha},w}(\mathbb{E}_{\alpha}\times\mathbb{E}_{\beta})\rightarrow \mathcal{D}^b_{G_{\alpha},w}(\mathbb{E}_{\alpha}\times\mathbb{E}_{\beta}).$$
Denote by ${S}\hat{\otimes}1$ and $1\hat{\otimes}{S}$ the induced map over  $K_{G_{\alpha}\times G_{\beta}, w}(\mathbb{E}_{\alpha}\times\mathbb{E}_{\beta})$, respectively.

\begin{Prop}\label{antipode2}
With the above notations, we have
$$
\mathfrak{m}^t(S\hat{\otimes} 1)\Delta^t([\mathcal{L}])=\mathfrak{m}^t(1\hat{\otimes} S)\Delta^t([\mathcal{L}])=\left\{
\begin{array}{c}
\textrm{$0$ \,\,\,\,\,\,if $[\mathcal{L}]\neq[\mathbf{1}_0]$},\\
\textrm{$[\mathbf{1}_0]$ \,\,\,\,\,\,if $[\mathcal{L}]=[\mathbf{1}_0]$},
\end{array}\right.
$$
where $\mathbf{1}_0$ is the constant sheaf on $\mathbb{E}_0$.
\end{Prop}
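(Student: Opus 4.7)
The strategy is to reduce the categorical antipode identity to the corresponding identity~(\ref{antipode_multi_comulti}) on constructible functions, exploiting the injectivity of $\chi^{tw}=\prod_{n\in\mathbb{N}}\chi^{F^n,tw}$ stated in the corollary above. Because $\chi^{F^n,tw}$ is a ring homomorphism and also intertwines $\Delta^t$ with $\delta^t$ (Lemma~\ref{bialgebra}) and $S$ with $\sigma^t$ (the lemma just before the statement), the main preparatory step is to promote these compatibilities to the pair categories $\mathcal{D}^b_{G_\alpha\times G_\beta,w}(\mathbb{E}_\alpha\times\mathbb{E}_\beta)$: namely, to verify that $\mathbf{S}\hat{\otimes}1$ and $1\hat{\otimes}\mathbf{S}$, assembled from the mixed $r$-fold twisted functors $\mathbf{m}^{t,r}_{(\alpha_1,\beta),\ldots,(\alpha_r,0)}$ and $\mathbf{\Delta}^{t,r}_{(\alpha_1,\beta),\ldots,(\alpha_r,0)}$, categorify $\sigma^t\otimes 1$ and $1\otimes\sigma^t$ slot-by-slot. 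This should follow by tracing through the defining diagrams: the maps in the untouched slot are identities, so applying $\chi^{F^n,tw}$ yields $\underline{m}^{t,r}\circ\delta^{t,r}$ acting in the first slot only, and summing with alternating signs recovers $\sigma^t\otimes 1$.

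Combining all these intertwining properties, for every $n\in\mathbb{N}$ one obtains
$$\chi^{F^n,tw}\bigl(\mathfrak{m}^t(S\hat{\otimes}1)\Delta^t([\mathcal{L}])\bigr)=\underline{m}^t(\sigma^t\otimes 1)\delta^t\bigl(\chi^{F^n,tw}([\mathcal{L}])\bigr).$$
By linearity one may assume $[\mathcal{L}]\in K_{G_\gamma,w}(\mathbb{E}_\gamma)$ is homogeneous. If $\gamma\neq 0$, then $\chi^{F^n,tw}([\mathcal{L}])\in\mathcal{CF}^{F^n,tw}_\gamma$, and identity~(\ref{antipode_multi_comulti}) (extended linearly over the basis of characteristic functions $1_{\mathcal{O}^{F^n}_M}$ with $M\neq 0$) forces the right-hand side to vanish for every $n$; the injectivity of $\chi^{tw}$ then gives $\mathfrak{m}^t(S\hat{\otimes}1)\Delta^t([\mathcal{L}])=0$. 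If $\gamma=0$, the variety $\mathbb{E}_0$ is a point, the only basis element up to Tate twist is $[\mathbf{1}_0]$, and the relations $\Delta^t([\mathbf{1}_0])=[\mathbf{1}_0]\boxtimes[\mathbf{1}_0]$ together with $S([\mathbf{1}_0])=[\mathbf{1}_0]$ give $\mathfrak{m}^t(S\hat{\otimes}1)\Delta^t([\mathbf{1}_0])=[\mathbf{1}_0]$ by direct computation on a point. The proof for $1\hat{\otimes}S$ is identical, using $\underline{m}^t(1\otimes\sigma^t)\delta^t$ in place of $\underline{m}^t(\sigma^t\otimes 1)\delta^t$.

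The main obstacle is the first step: one has to check carefully that $\chi^{F^n,tw}\circ(\mathbf{S}\hat{\otimes}1)=(\sigma^t\otimes 1)\circ\chi^{F^n,tw}$, since $\mathbf{S}\hat{\otimes}1$ is not literally the tensor of $\mathbf{S}$ with the identity functor but rather is built from bifactor $r$-fold induction and restriction on the pair variety $\mathbb{E}_\alpha\times\mathbb{E}_\beta$, with only one slot being genuinely multiplied and comultiplied. Once this intertwiner is established, the proposition becomes a formal consequence of the function-level counit axiom~(\ref{antipode_multi_comulti}), the injectivity of $\chi^{tw}$, and the ring and coring compatibilities already set up in Propositions~\ref{algebrahomo} and~\ref{coalghomomorphism}.
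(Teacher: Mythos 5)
Your proposal takes essentially the same route as the paper: reduce via the injectivity of $\chi^{tw}=\prod_n\chi^{F^n,tw}$, use the intertwining identities $\chi^{F^n,tw}\circ\mathfrak{m}^{t,r}=\underline{m}^{t,r}\circ\chi^{F^n,tw}$, $\chi^{F^n,tw}\circ\Delta^{t,r}=\delta^{t,r}\circ\chi^{F^n,tw}$, and $\chi^{F^n,tw}\circ(S\hat{\otimes}1)=(\sigma^t\otimes 1)\circ\chi^{F^n,tw}$, and then invoke the function-level counit axiom~(\ref{antipode_multi_comulti}). Your treatment is somewhat more explicit than the paper's (you isolate the need to verify the slot-by-slot categorification for $\mathbf{S}\hat{\otimes}1$ and handle the $\gamma=0$ base case separately), but the logical structure and all essential ingredients coincide.
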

\begin{proof}
By Theorem \ref{injective}, we only need to prove that
$$
\chi^{F^k, tw}(\mathfrak{m}^t(S\hat{\otimes} 1)\Delta^t([\mathcal{L}]))=\chi^{F^k, tw}(\mathfrak{m}^t(1\hat{\otimes}S)\Delta^t([\mathcal{L}]))=\left\{
\begin{array}{c}
\textrm{$0$ \,\,\,\,\,\,if $[\mathcal{L}]\neq[\mathbf{1}_0]$},\\
\textrm{$[1_0]$ \,\,\,\,\,\,if $[\mathcal{L}]=[\mathbf{1}_0]$}.
\end{array}\right.
$$
Since we have $\chi^{F^k, tw}\mathfrak{m}^{t, r}=\underline{m}^{t,r}\chi^{F^k, tw}$, $\chi^{F^k, tw}\Delta^{t, r}=\delta^{t,r}\chi^{F^k, tw}$ and $\chi^{F, tw}(S\hat{\otimes}1)=(\sigma^t\otimes1)\chi^{F,tw}$, the equation (\ref{antipode_multi_comulti})
implies the desired result.
\end{proof}

As a corollary of Theorem \ref{sub_I_K}, we have the following lemma.
\begin{lemma}\label{lemma:antipode}
The subring $\mathbf{I}^{tw}_{w}$ is closed under the antipode $S$ of $\mathbf{K}^{tw}_{w}$.
\end{lemma}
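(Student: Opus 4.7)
The plan is to leverage Theorem \ref{sub_I_K}, which says that $\mathbf{I}_w$ is a subring of $\mathbf{K}_w$ closed under both the multiplication $\mathfrak{m}$ and the restriction map $\Delta$ in the untwisted setting. Since the antipode $S$ on a class $[\mathcal{L}]$ is assembled by iterating and summing the $r$-fold twisted operations $\mathfrak{m}^{t,r}\circ \Delta^{t,r}$, the task reduces to upgrading the $r=2$ statement of Theorem \ref{sub_I_K} to its $r$-fold twisted version and then verifying that the sum defining $S([\mathcal{L}])$ is actually a finite sum of elements of $\mathbf{I}^{tw}_w$.

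First I would check that the integer-weight subcategory $\mathcal{D}^b_{im, G_\alpha, w}(\mathbb{E}_\alpha)$ is preserved by the twisted functors $\mathbf{m}^t_{\alpha,\beta}$ and $\mathbf{\Delta}^t_{\alpha,\beta}$. For the underlying untwisted functors $\mathbf{m}_{\alpha,\beta}$ and $\mathbf{\Delta}_{\alpha,\beta}$ this is exactly the content of the two commutative diagrams used to prove Theorem \ref{sub_I_K} (quoting \cite[I.9, II.12]{KW}). The twist modifies each functor by a homological shift together with a Tate twist $(\tfrac{\{\alpha,\beta\}}{2})$ or $(-\tfrac{\langle\alpha,\beta\rangle}{2})$; since the integers $\{\alpha,\beta\}$ and $\langle\alpha,\beta\rangle$ are integers and the Tate twist $(n/2)$ shifts weights by $-n$, the integer-weight subcategory is preserved. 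Hence the twisted functors $\mathfrak{m}^t_{\alpha,\beta}$ and $\Delta^t_{\alpha,\beta}$ both restrict to $\mathbf{I}^{tw}_w$.

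Next, I would iterate. By induction on $r$, the $r$-fold twisted functors $\mathbf{m}^{t,r}_{\alpha_1,\dots,\alpha_r}$ and $\mathbf{\Delta}^{t,r}_{\alpha_1,\dots,\alpha_r}$, being built from $\mathbf{m}^t$ (resp. $\mathbf{\Delta}^t$) and the identity via the $\hat{\otimes}$-construction, also preserve the integer-weight subcategory. Consequently, for any simple $[\mathcal{L}] \in \mathbf{I}^{tw}_w$ with $\mathcal{L} \in \mathcal{D}^b_{im, G_\alpha, w}(\mathbb{E}_\alpha)$, every summand $\mathfrak{m}^{t,r}_{\alpha_1,\dots,\alpha_r}\circ \Delta^{t,r}_{\alpha_1,\dots,\alpha_r}([\mathcal{L}])[r]$ that appears in the definition of $S([\mathcal{L}])$ lies in $K_{im, G_\alpha, w}(\mathbb{E}_\alpha) \subset \mathbf{I}^{tw}_w$, as the additional shift $[r]$ does not affect weights.

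Finally, I would note that for fixed $\alpha$, the index set consists of $r\ge 1$ and ordered decompositions $\alpha = \alpha_1+\cdots+\alpha_r$ with $\alpha_i \neq 0$ in $\mathbb{N}Q_0$; this is a finite set, so $S([\mathcal{L}])$ is a finite sum and thus belongs to $\mathbf{I}^{tw}_w$. The only real obstacle in this argument is the bookkeeping check that the twist data and shifts really do preserve integer weights throughout all iterations; this is mild, since it boils down to integrality of the pairings $\{\alpha_i,\alpha_j\}$ and $\langle\alpha_i,\alpha_j\rangle$, which is immediate from their definitions.
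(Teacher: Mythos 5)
Your argument is correct and is essentially the paper's intended proof: the paper states the lemma as a corollary of Theorem~\ref{sub_I_K} without further elaboration, and your write-up simply fills in the routine bookkeeping (that the twist by a shift paired with a half-integral Tate twist with integer numerator preserves the integer-weight subcategory, that this persists under iteration to the $r$-fold functors, and that the sum defining $S$ over decompositions of a fixed dimension vector is finite). No gaps; this is the same approach at a slightly more explicit level of detail.
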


Define the functor
$\mathbf{S}\hat{\otimes}\mathbf{S}=(1\hat{\otimes}\mathbf{S})\circ (\mathbf{S}\hat{\otimes}1).$ Denote the induced map over the Grothendieck group by $S\hat{\otimes}S.$


As a consequence of
Theorem \ref{sub_I_K}, Proposition \ref{antipode2} and Lemma \ref{lemma:antipode}, we obtain the main theorem in this section.
\begin{theorem}
The subring $\mathbf{I}^{tw}_{w}$ satisfies the following conditions:
\begin{enumerate}
\item $\Delta^t\circ\mathfrak{m}^t=\mathfrak{m}^t\circ\Delta^t,$
\item $S(x\cdot y)=S(y)\cdot S(x)$,  for any $x,y\in\mathbf{I}^{tw}_{w}$,
\item $\Delta^t(S(x))=(S\hat{\otimes} S){\Delta^{t,op}}(x)$,  for any $x\in\mathbf{I}^{tw}_{w}$,
\item $\mathfrak{m}^t(S\hat{\otimes} 1)\Delta^t([\mathcal{L}])=\mathfrak{m}^t(1\hat{\otimes} S)\Delta^t([\mathcal{L}])=\left\{
\begin{array}{c}
\textrm{$0$ \,\,\,\,\,\,if $[\mathcal{L}]\neq[\mathbf{1}_0]$},\\
\textrm{$[\mathbf{1}_0]$ \,\,\,\,\,\,if $[\mathcal{L}]=[\mathbf{1}_0]$},
\end{array}\right.$
\end{enumerate}
where $\Delta^{t,op}$ is the composition of $\Delta^{t}$ with the natural linear isomorphism
from $K_{G_{\alpha}\times G_{\beta}, w}(\mathbb{E}_{\alpha}\times\mathbb{E}_{\beta})$ to $K_{G_{\alpha}\times G_{\beta}, w}(\mathbb{E}_{\beta}\times\mathbb{E}_{\alpha})$.
\end{theorem}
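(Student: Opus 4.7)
The plan is to establish each of the four identities by reducing to the corresponding statement in $\mathcal{CF}^{F^n,tw}(Q)$, then transporting it back via the injective ring homomorphism $\chi^{tw}=\prod_{n}\chi^{F^n,tw}$ of the Corollary following Lemma~\ref{bialgebra}. Three of the four identities are in fact almost immediate from results already in the paper; only (2) and (3) require a genuine argument.

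Identity (1) follows from the fact that Theorem~\ref{sub_I_K} yields $\Delta_{im}\circ\mathfrak{m}=\mathfrak{m}\circ\Delta_{im}$ on $\mathbf{I}_w$, and the twisted versions $\mathfrak{m}^t_{\alpha,\beta}=\mathfrak{m}_{\alpha,\beta}[\{\alpha,\beta\}](\tfrac{\{\alpha,\beta\}}{2})$ and $\Delta^t_{\alpha,\beta}=\Delta_{\alpha,\beta}[-\langle\alpha,\beta\rangle](-\tfrac{\langle\alpha,\beta\rangle}{2})$ differ only by shifts and Tate twists, which preserve $\mathbf{I}_w$ and commute with the additive maps involved. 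A short bookkeeping check on the exponents, on each summand $(\alpha',\alpha'',\beta',\beta'')\in\mathcal{N}$ of Theorem~\ref{thm_main}, shows the twist factors match on both sides, yielding the twisted statement on $\mathbf{I}^{tw}_w$. Identity (4) has already been proved as Proposition~\ref{antipode2} for the larger ring $\mathbf{K}^{tw}_w$; since Lemma~\ref{lemma:antipode} ensures $S$ preserves $\mathbf{I}^{tw}_w$ and the twisted analog of Theorem~\ref{sub_I_K} ensures $\mathfrak{m}^t$ and $\Delta^t$ restrict to $\mathbf{I}^{tw}_w$, identity (4) descends to $\mathbf{I}^{tw}_w$ by restriction.

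For (2) and (3), the argument runs identically for both. By injectivity of $\chi^{tw}$, it suffices to verify each identity after applying $\chi^{F^n,tw}$ for every $n\ge 1$. The lemma preceding Proposition~\ref{antipode2} provides the intertwining $\chi^{F^n,tw}\circ S=\sigma^t\circ\chi^{F^n,tw}$; combined with $\chi^{F^n,tw}\circ\mathfrak{m}^t=\underline{m}^t\circ\chi^{F^n,tw}$ (the twisted version of Proposition~\ref{algebrahomo}) and $\chi^{F^n,tw}\circ\Delta^t=\delta^t\circ\chi^{F^n,tw}$ (the twisted version of Proposition~\ref{coalghomomorphism}), identity (2) reduces to $\sigma^t(f\cdot g)=\sigma^t(g)\cdot\sigma^t(f)$ and identity (3) reduces to $\delta^t(\sigma^t(f))=(\sigma^t\otimes\sigma^t)\delta^{t,op}(f)$ in $\mathcal{CF}^{F^n,tw}(Q)$. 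Both of these were recorded at the end of Section~3.

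The principal obstacle is the verification of the intertwinings of $\chi^{F^n,tw}$ with the two-slot operations $S\hat{\otimes}1$, $1\hat{\otimes}S$, $S\hat{\otimes}S$, and the flip $\Delta^{t,op}$: these do not reduce literally to the single-slot antipode intertwining, because they are built from inductions and restrictions performed on pairs of dimension vectors, essentially replacing $Q$ by $Q\times Q$. Nevertheless, the proofs of Propositions~\ref{algebrahomo} and \ref{coalghomomorphism} and the diagrammatic setup used in Theorem~\ref{thm_main} apply verbatim in the $Q\times Q$ setting, so each of these operations intertwines $\chi^{F^n,tw}$ with its function-level counterpart. Once those bookkeeping compatibilities are in place, (2) and (3) follow from the two Hopf identities recalled from Section~3 together with the injectivity of $\chi^{tw}$, completing the proof.
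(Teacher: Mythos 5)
Your proposal is correct and takes essentially the same route as the paper: the paper derives the theorem directly from Theorem~\ref{sub_I_K} (for item (1)), Proposition~\ref{antipode2} together with Lemma~\ref{lemma:antipode} (for item (4)), and the same injectivity-plus-function-level argument (via $\chi^{tw}$ and the Hopf identities recorded at the end of Section 3) implicitly handles (2) and (3), exactly as you spell out. The only difference is that you make explicit the bookkeeping (the two-slot intertwinings and the twist-factor matching) that the paper leaves tacit by phrasing the result as a ``consequence'' of the cited results.
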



There is the other version of the Grothendieck group
defined in the same way as \cite{Lusztigbook}. Let $\alpha$ be a dimension vector and $\mathcal{R}_{im,\alpha}$ be the additive category of complexes isomorphic to sums of shifts of simple perverse sheaves in $\md_{im, G_{\alpha}, w}(\mathbb{E}_{\alpha})$.  Define $K(\mathcal{R}_{im,\alpha})$ to be the Grothendieck group of $\mathcal{R}_{im,\alpha}$ as an additive category.   It can be viewed as an $\mathbb{A}$-module by setting $v[\mathcal{L}]=[\mathcal{L}[1](\frac{1}{2})]$ and $v^{-1}[\mathcal{L}]=[\mathcal{L}[-1](-\frac{1}{2})]$ for $\mathcal{L}\in \mathcal{R}_{im,\alpha}$. Set $K(\mathcal{R}_{im})=\bigoplus_{\alpha}K(\mathcal{R}_{im,\alpha})$ and ${_{\mathbb{Q}(v)}{K}}(\mathcal{R}_{im})=K(\mathcal{R}_{im})\otimes_{\mathbb{A}}\mathbb{Q}(v)$. Similarly, given two dimension vectors $\alpha$ and $\beta$, $\mathcal{R}_{im, \alpha,\beta}$ is the additive category of complexes isomorphic to sums of shifts of simple perverse sheaves in $\md_{im, G_{\alpha}, w}(\mathbb{E}_{\alpha}\times \mathbb{E}_{\beta}).$
\begin{Prop}
As the $\mathbb{A}$-module, $K(\mathcal{R}_{im})$ is free and has isomorphism classes of $\iota$-mixed simple perverse sheaves of integer weight as the basis.
\end{Prop}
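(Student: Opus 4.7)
The plan is to reduce this to a direct application of the Krull--Schmidt theorem in the additive category $\mathcal{R}_{im,\alpha}$, combined with an analysis of the $\mathbb{A}$-orbits of indecomposable objects under the shift-plus-half-Tate-twist action.

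First I would observe that $\mathcal{R}_{im,\alpha}$ is an idempotent-complete additive $\overline{\mathbb{Q}}_l$-linear category whose indecomposables have local endomorphism rings (for an $\iota$-mixed simple perverse sheaf $\mathcal{L}$ of integer weight, $\mathrm{End}(\mathcal{L})=\overline{\mathbb{Q}}_l$, and shifts/Tate twists do not change this), so Krull--Schmidt holds. Hence every object $\mathcal{K}\in \mathcal{R}_{im,\alpha}$ admits an essentially unique decomposition
\[
\mathcal{K}\simeq\bigoplus_j \mathcal{L}_j[n_j]\Bigl(\tfrac{m_j}{2}\Bigr),
\]
where each $\mathcal{L}_j$ is an $\iota$-mixed simple perverse sheaf in $\md_{im,G_\alpha,w}(\mathbb{E}_\alpha)$ and $n_j,m_j\in\mathbb{Z}$ (the factor $\tfrac{1}{2}$ is allowed because half-Tate twists preserve integer weights). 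This gives $K(\mathcal{R}_{im,\alpha})$ a description as the free $\mathbb{Z}$-module on isomorphism classes of indecomposables in $\mathcal{R}_{im,\alpha}$.

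Next I would describe the $\mathbb{A}$-action orbit-wise. Since $v\cdot[\mathcal{L}]=[\mathcal{L}[1](\tfrac{1}{2})]$, the $\mathbb{A}$-orbit of an indecomposable $\mathcal{L}[n](\tfrac{m}{2})$ is
\[
\bigl\{\,[\mathcal{L}[n+k](\tfrac{m+k}{2})]\;:\;k\in\mathbb{Z}\,\bigr\}.
\]
Using that the weight of $\mathcal{L}[n+k](\tfrac{m+k}{2})$ is $\mathrm{wt}(\mathcal{L})+n+k-(m+k)=\mathrm{wt}(\mathcal{L})+n-m$, which is independent of $k$, while the position in the derived category of $\mathcal{L}$ itself pins down $k$, one sees that the action is \emph{free} on the set of indecomposable isomorphism classes. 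From each orbit I would choose a canonical representative (for instance, the one which is a simple perverse sheaf in the heart, i.e.\ $n=0$; these are precisely the $\iota$-mixed simple perverse sheaves of integer weight in $\md_{im,G_\alpha,w}(\mathbb{E}_\alpha)$, matching the statement).

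The freeness and the spanning property are then immediate: every indecomposable is $v^k$ times a chosen representative, so every object decomposes uniquely as an $\mathbb{A}$-linear combination of the chosen basis; summing over $\alpha$ gives the claim for $K(\mathcal{R}_{im})$. The only step that requires real care is the Krull--Schmidt argument in this setting: one must check that idempotents split in $\mathcal{R}_{im,\alpha}$ and that endomorphism rings of indecomposables are local. The former is standard for an additive category generated under finite direct sums by objects with semisimple endomorphism algebras, and the latter reduces, after the full decomposition, to $\mathrm{Hom}(\mathcal{L}[n](\tfrac{m}{2}),\mathcal{L}'[n'](\tfrac{m'}{2}))=0$ unless $(\mathcal{L},n,m)\cong(\mathcal{L}',n',m')$, which follows from the disjointness of weights and perverse degrees. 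I expect this orthogonality statement between shifted/Tate-twisted simple perverse sheaves to be the main technical hurdle, but it is well-known from the Beilinson--Bernstein--Deligne formalism.
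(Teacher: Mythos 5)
The paper states this proposition without proof (it simply cites Lusztig's \emph{Introduction to Quantum Groups} for the analogous $\mathbb{A}$-module construction), so there is no paper argument to compare against. Your overall strategy is correct and is in fact the standard one: Krull--Schmidt for the additive category $\mathcal{R}_{im,\alpha}$ identifies $K(\mathcal{R}_{im,\alpha})$ as the free $\mathbb{Z}$-module on indecomposable isomorphism classes, the $v$-action permutes those basis elements freely (because the perverse degree of $\mathcal{L}[n+k](\tfrac{m+k}{2})$ strictly increases with $k$), and choosing the simple perverse sheaves in the heart as orbit representatives gives exactly the basis asserted in the statement. The weight bookkeeping you do ($[1]$ raises weight by $1$, $(\tfrac12)$ lowers it by $1$, so integer weight is preserved) is also right and necessary to see that $v$ actually acts on $K(\mathcal{R}_{im})$.

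The one flawed step is the final paragraph. You claim that locality of endomorphism rings of indecomposables ``reduces, after the full decomposition, to $\mathrm{Hom}(\mathcal{L}[n](\tfrac{m}{2}),\mathcal{L}'[n'](\tfrac{m'}{2}))=0$ unless $(\mathcal{L},n,m)\cong(\mathcal{L}',n',m')$.'' This is not what is needed, and it is also not true. Locality of $\mathrm{End}(\mathcal{L}[n](\tfrac{m}{2}))$ is a statement about one indecomposable at a time, and it is immediate: shift and Tate twist are autoequivalences, so $\mathrm{End}(\mathcal{L}[n](\tfrac{m}{2}))\cong\mathrm{End}(\mathcal{L})=\overline{\mathbb{Q}}_l$ by Schur's lemma, which is local. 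No cross-Hom vanishing enters. Moreover, the asserted orthogonality is generally false: $\mathrm{Hom}(\mathcal{L},\mathcal{L}'[k])=\mathrm{Ext}^k(\mathcal{L},\mathcal{L}')$ is typically nonzero for $k>0$, even between non-isomorphic simple perverse sheaves, and neither weights nor perverse degree rule this out in general. Fortunately the Krull--Schmidt property does not require this orthogonality (it only needs Hom-finiteness, idempotent completeness, and local endomorphism rings of indecomposables), so the error does not damage the proof; but the sentence should be replaced by the direct observation that $\mathrm{End}$ of each shifted/twisted simple is $\overline{\mathbb{Q}}_l$. Alternatively, you can sidestep Krull--Schmidt entirely by taking perverse cohomology: for $\mathcal{K}=\bigoplus_j\mathcal{L}_j[n_j]$, each $^pH^{-k}(\mathcal{K})$ is the semisimple perverse sheaf $\bigoplus_{n_j=k}\mathcal{L}_j$, whose simple constituents with multiplicity are determined, giving uniqueness of the decomposition directly.
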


Applying Lemma \ref{lemma_1} and \ref{lemma_2}, we have
$$
\mathbf{m}_{\alpha,\beta}(\mathcal{R}_{im,\alpha}\boxtimes\mathcal{R}_{im,\beta})\subseteq \mathcal{R}_{im,\alpha+\beta} \mbox{ and } \mathbf{\Delta}_{\alpha,\beta}(\mathcal{R}_{im,\alpha+\beta})\subseteq \mathcal{R}_{im,\alpha,\beta},
$$
for two dimension vectors $\alpha, \beta$.
Then $K(\mathcal{R}_{im})$ and ${_{\mathbb{Q}(v)}{K}}(\mathcal{R}_{im})$ can be endowed with the multiplication and comultiplication. Given dimension vectors $\alpha, \beta, u, v$ with $\alpha+\beta=u+v$, applying Green's theorem,
we obtain the commutative diagram
$$
\xymatrix{K(\mathcal{R}_{im,\alpha,\beta})\ar[d]^{\Delta}\ar[r]^-{\mathfrak{m}_{\alpha, \beta}}&K(\mathcal{R}_{im,\alpha+\beta})\ar[d]^-{\Delta_{u, v}}\\
\coprod_{\lambda\in \mathcal{N}}K(\mathcal{R}_{im,\lambda})\ar[r]^-{\mathfrak{m}}&K(\mathcal{R}_{im,u,v})}
$$
where $\lambda$ and $\mathcal{N}$ are defined as in the proof of Theorem \ref{thm_main}.
\begin{Prop}
Let $K(\mathcal{R})$ be the quotient of $K(\mathcal{R}_{im})$ by the relations $[\mathcal{L}[1]]=-[\mathcal{L}]$ for any simple perverse sheaf $\mathcal{L}\in \mathcal{R}_{im}$. Then $K(\mathcal{R})$ can be viewed as an $\mathbb{A}$-module. Write  ${_{\mathbb{Q}(v)}K}(\mathcal{R})=K(\mathcal{R})\otimes_{\mathbb{A}}\mathbb{Q}(v)$.  There is an isomorphism of $Q(v)$-algebras  between ${_{\mathbb{Q}(v)}K}(\mathcal{R})$ and ${_{\mathbb{Q}(v)}\mathbf{I}}_{w}$.
\end{Prop}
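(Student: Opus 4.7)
The plan is to construct a natural $\mathbb{A}$-linear map $\Phi : K(\mathcal{R}_{im}) \to \mathbf{I}_{w}$ at the level of objects, show it kills the defining relations of $K(\mathcal{R})$ and is multiplicative, and then identify bases after base-change to $\mathbb{Q}(v)$.

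First, I would define $\Phi$ componentwise in the dimension-vector grading by sending $[\mathcal{L}]$ for $\mathcal{L}\in\mathcal{R}_{im,\alpha}$ to its class in $K_{im,G_{\alpha},w}(\mathbb{E}_{\alpha})\subseteq\mathbf{I}_{w}$. This is well-defined on the additive Grothendieck group because the split triangle $\mathcal{L}\to\mathcal{L}\oplus\mathcal{L}'\to\mathcal{L}'\stackrel{+}{\to}$ already realises the additive relation $[\mathcal{L}\oplus\mathcal{L}']=[\mathcal{L}]+[\mathcal{L}']$ inside the triangulated Grothendieck group. Since both $\mathbb{A}$-module structures are imposed by the identical rule $v[\mathcal{L}]=[\mathcal{L}[1](\tfrac{1}{2})]$, the map is $\mathbb{A}$-linear. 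The distinguished triangle $\mathcal{L}\to 0\to\mathcal{L}[1]\stackrel{+}{\to}$ forces $[\mathcal{L}[1]]=-[\mathcal{L}]$ in $\mathbf{I}_{w}$, so $\Phi$ kills the extra relations defining $K(\mathcal{R})$ and descends to $\bar{\Phi}:K(\mathcal{R})\to\mathbf{I}_{w}$.

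Next, I would check multiplicativity. By the preceding proposition, the product on $K(\mathcal{R})$ is induced by restricting $\mathbf{m}_{\alpha,\beta}$ to $\mathcal{R}_{im,\alpha}\boxtimes\mathcal{R}_{im,\beta}\subseteq\mathcal{R}_{im,\alpha+\beta}$, and by Theorem \ref{sub_I_K} the product on $\mathbf{I}_{w}$ is given by the very same functor. Hence $\bar{\Phi}([\mathcal{L}_1]\cdot[\mathcal{L}_2])=[\mathbf{m}_{\alpha,\beta}(\mathcal{L}_1\boxtimes\mathcal{L}_2)]=\bar{\Phi}([\mathcal{L}_1])\cdot\bar{\Phi}([\mathcal{L}_2])$, and the unit $[\mathbf{1}_0]$ is preserved since $\mathbf{1}_0\in\mathcal{R}_{im,0}$. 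Tensoring with $\mathbb{Q}(v)$ over $\mathbb{A}$ produces a $\mathbb{Q}(v)$-algebra homomorphism ${_{\mathbb{Q}(v)}K}(\mathcal{R})\to{_{\mathbb{Q}(v)}\mathbf{I}}_{w}$.

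Finally, I would prove bijectivity by a basis comparison. The target has $\mathbf{B}$ (weight $0$ $\iota$-pure simple perverse sheaves) as a $\mathbb{Q}(v)$-basis. On the source, the relation $v[\mathcal{L}]=[\mathcal{L}[1](\tfrac{1}{2})]=-[\mathcal{L}(\tfrac{1}{2})]$ in $K(\mathcal{R})$ forces $[\mathcal{L}(k/2)]=(-v)^{k}[\mathcal{L}]$, so every Tate twist of a simple perverse sheaf becomes an $\mathbb{A}^{\times}$-multiple of any other; hence one may take $\mathbf{B}$ itself as a $\mathbb{Q}(v)$-basis of ${_{\mathbb{Q}(v)}K}(\mathcal{R})$, and $\bar{\Phi}$ sends this basis to itself. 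The main obstacle is this last bookkeeping step: one must confirm that the only collapses occurring when passing from $K(\mathcal{R}_{im})$ to $K(\mathcal{R})$ and then tensoring with $\mathbb{Q}(v)$ are precisely the absorption of shifts into signs and of Tate twists into powers of $v$, with no further hidden identifications among distinct weight-$0$ simples; this follows from the freeness statement in the previous proposition together with the fact that in $\mathbf{I}_{w}$ the set $\mathbf{B}$ is already a free $\mathbb{A}$-basis.
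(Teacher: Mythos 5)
Your proposal is correct, and it follows what is essentially the only reasonable route: since $K(\mathcal{R}_{im})$ is the split (additive) Grothendieck group and $\mathbf{I}_w$ is the triangulated one, the natural comparison map exists and kills exactly the extra relations $[\mathcal{L}[1]]=-[\mathcal{L}]$, multiplicativity comes for free because the product on both sides is induced by the same induction functor $\mathbf{m}_{\alpha,\beta}$, and the bijection is then read off from the two free $\mathbb{A}$-bases. The paper gives no proof of this proposition at all, so there is nothing to contrast with; your argument is the expected one.

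Two small points worth tightening. First, when you say ``one may take $\mathbf{B}$ itself as a $\mathbb{Q}(v)$-basis of ${_{\mathbb{Q}(v)}K}(\mathcal{R})$,'' you really mean the preimages of $\mathbf{B}$ under $\bar{\Phi}$, i.e.\ the classes of the chosen weight-$0$ representatives as elements of $K(\mathcal{R})$. Second, the ``bookkeeping step'' deserves one explicit line: write each integer-weight simple as $\mathcal{L}_0(-\tfrac{k}{2})$ with $\mathcal{L}_0$ of weight $0$; then the imposed relations translate into $[\mathcal{L}_0(-\tfrac{k}{2})]=(-v^{-1})^k[\mathcal{L}_0]$ in $K(\mathcal{R})$, which shows directly that the quotient of the free $\mathbb{A}$-module $K(\mathcal{R}_{im})$ is again free on $\{[\mathcal{L}_0]\}$ with no torsion, after which $\bar{\Phi}$ carries this basis bijectively onto $\mathbf{B}$. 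With that spelled out your argument in fact proves the stronger statement that $\bar{\Phi}$ is an isomorphism of $\mathbb{A}$-algebras already before tensoring with $\mathbb{Q}(v)$.
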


\section{Return to quantum groups}
In this section, we will compare the algebra $\mathbf{I}^{tw}_{w}$ with the categorifical construction of the quantum group associated to a quiver without a loop considered by Lusztig (\cite{Lusztig}) and with loops generalized by Bozec (\cite{Bo}). First, we shall recall some notations in \cite{Lusztig} and \cite{Bo}.


Let $Q=(Q_0, Q_1, s, t)$ be a quiver. A vertex $i\in Q_0$ is called imaginary if there is at least one loop at $i.$   Given a dimension vector $\alpha=\sum_{i\in Q_0}\alpha_ii\in\mathbb{N}Q_0,$ define the variety
$$
\mathbb{E}_{\alpha}:=\mathbb{E}_{\alpha}(Q)=\bigoplus_{h\in Q_1}\mathrm{Hom}_{\mathbb{K}}(\mathbb{K}^{\alpha_{s(h)}}, \mathbb{K}^{\alpha_{t(h)}})
$$
with the action of the algebraic group
$G_{\alpha }:=G_{\alpha }(Q)=\prod_{i\in Q_0}GL(\alpha_i,\mathbb{K})$.  Let
$$Y_{\alpha }=\{\mathbf{y}=(\mathbf{i},\mathbf{a})\,\,|\,\,\sum_{l=1}^{k}a_li_l=\alpha \},$$
where $\mathbf{i}=(i_1,i_2,\ldots,i_k),\,\,i_l\in Q_0$, $\mathbf{a}=(a_1,a_2,\ldots,a_k),\,\,a_l\in\mathbb{N}$.
For any element $\mathbf{y}=(\mathbf{i},\mathbf{a})$,
a flag of type $\mathbf{y}$ in $\mathbb{K}^{\alpha }=\bigoplus_{i\in Q_0}\mathbb{K}^{\alpha_i}$ is a sequence
$$\phi=(\mathbb{K}^{\alpha }={V}^k\supset{V}^{k-1}\supset\dots\supset{V}^0=0)$$
of $Q_0$-graded $\mathbb{K}$-vector spaces such that $\underline{\dim}{V}^l/{V}^{l-1}=a_li_l$. Let $F_{\alpha , \mathbf{y}}=F_{\mathbf{y}}$ be the variety of all flags of type $\mathbf{y}$ in $\mathbb{K}^{\alpha }$. For any $x\in \mathbb{E}_{\alpha }$, a flag $\phi$ is called $x$-stable if $x_{h}(V^l_{s(h)})\subset{V}^l_{t(h)}$ for all $l$ and all $h\in H$. Let
$$\tilde{F}_{\alpha , \mathbf{y}}=\tilde{F}_{\mathbf{y}}=\{(x,\phi)\in \mathbb{E}_{\alpha }\times F_{\mathbf{y}}\,\,|\,\,\textrm{$\phi$ is $x$-stable}\}$$
and $\pi_{\alpha , \mathbf{y}}:\tilde{F}_{\mathbf{y}}\rightarrow \mathbb{E}_{\alpha }$
be the projection to $\mathbb{E}_{\alpha }$.

For each $\mathbf{y}\in{Y}_{\alpha }$,  we set $1_{\alpha , \mathbf{y}}=(\pi_{\alpha , \mathbf{y}})_!(1_{\tilde{F}_{\alpha , \mathbf{y}}})$ where $1_{\tilde{F}_{\alpha , \mathbf{y}}}$  is the characteristic function over $\tilde{F}_{\alpha , \mathbf{y}}.$ As in \cite{Lusztig1998}, we denote by $\mathcal{F}_{\alpha }$ the subspace of $\mathcal{CF}_{G_{\alpha }}(\mathbb{E}_{\alpha })$ spanned by $1_{\alpha , \mathbf{y}}$ for $\mathbf{y}\in Y_{\alpha }$.

By the decomposition theorem of Beilinson, Bernstein and Deligne (\cite{BBD}), the Lusztig sheaf $\mathcal{L}_{\alpha , \mathbf{y}}=\mathcal{L}_{\mathbf{y}}=(\pi_{\alpha , \mathbf{y}})_!(\mathbf{1}_{\tilde{F}_{\mathbf{y}}})[d_{\mathbf{y}}](\frac{d_{\mathbf{y}}}{2})\in\mathcal{D}_{G_{\alpha }, w}(\mathbb{E}_{\alpha })$ is a semisimple perverse sheaf, where $\mathbf{1}_{\tilde{F}_{\mathbf{y}}}$ is the constant sheaf over $\tilde{F}_{\mathbf{y}}$ and $d_{\mathbf{y}}=\dim\tilde{F}_{\mathbf{y}}$.

Let $\mathcal{P}_{\alpha}$ be a subcategory of the category of perverse sheaves. The objects in $\mathcal{P}_{\alpha}$ are direct sums of simple perverse sheaves, which are direct summands of $\mathcal{L}_{\mathbf{y}}[r]$ for some $\mathbf{y}\in{Y}_{\alpha }$ and $r\in\mathbb{Z}$. Note that $\mathcal{P}_{\alpha}$ is a subcategory of $\mathcal{I}_{w,\alpha}$.

Let $\mathcal{Q}_{\alpha }$ be the subcategory of $\mathcal{D}_{G_{\alpha }, w}(\mathbb{E}_{\alpha })$, whose objects are the complexes that are isomorphic to finite direct sums of complexes of the form $\mathcal{L}[d](\frac{d}{2})$ for various $\mathcal{L}\in\mathcal{P}_{\alpha}$ and $d\in\mathbb{Z}$. Let $K_{\alpha }$ be the Grothendieck group of $\mathcal{Q}_{\alpha }.$
Define $v[\mathcal{L}]=[\mathcal{L}[1](\frac{1}{2})]$ and $v^{-1}[\mathcal{L}]=[\mathcal{L}[-1](-\frac{1}{2})].$
Then, $K_{\alpha }$ is a free $\mathbb{A}$-module.
Define $$K(\mathcal{Q})=\bigoplus_{\alpha }K_{\alpha }.$$

The functors $\mathbf{m}$ and $\mathbf{\Delta}$ in Section 4 can be restricted to the subcategory $\mathcal{Q}_{\alpha }$. The following observation is given in \cite[Section 9]{Lusztigbook}.
\begin{lemma}
Given two dimension vectors $\alpha, \beta$, we have
$$
\mathbf{m}_{\alpha,\beta}(\mathcal{Q}_{\alpha}\boxtimes\mathcal{Q}_{\beta})\subseteq \mathcal{Q}_{\alpha+\beta} \mbox{ and } \mathbf{\Delta}_{\alpha,\beta}(\mathcal{Q}_{\alpha+\beta})\subseteq \mathcal{Q}_{\alpha}\boxtimes\mathcal{Q}_{\beta}.
$$
\end{lemma}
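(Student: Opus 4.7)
The plan is to reduce both statements to the generators $\mathcal{L}_{\mathbf{y}}$ of $\mathcal{Q}$ and then invoke the standard combinatorial compatibility between flag-type concatenation and the induction/restriction diagrams, following \cite{Lusztigbook}, Section~9. Since $\mathbf{m}_{\alpha,\beta}$ and $\mathbf{\Delta}_{\alpha,\beta}$ are additive and $\mathcal{Q}$ is, by definition, closed under shifts, Tate twists and direct summands, it is enough to treat the case $\mathcal{L}_1 = \mathcal{L}_{\mathbf{y}_1}$, $\mathcal{L}_2 = \mathcal{L}_{\mathbf{y}_2}$ for the first inclusion and $\mathcal{L} = \mathcal{L}_{\mathbf{y}}$ for the second; any simple perverse sheaf in $\mathcal{P}_{\alpha}$ is a summand of some $\mathcal{L}_{\mathbf{y}}[r]$ by construction.

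For the induction part, given $\mathbf{y}_1=(\mathbf{i}_1,\mathbf{a}_1)\in Y_{\alpha}$ and $\mathbf{y}_2=(\mathbf{i}_2,\mathbf{a}_2)\in Y_{\beta}$, I form the concatenation $\mathbf{y}_1\mathbf{y}_2:=(\mathbf{i}_1\mathbf{i}_2,\mathbf{a}_1\mathbf{a}_2)\in Y_{\alpha+\beta}$. The key is to build a commutative diagram relating $\tilde F_{\mathbf{y}_1}\times \tilde F_{\mathbf{y}_2}$ and $\tilde F_{\mathbf{y}_1\mathbf{y}_2}$ through the induction datum $\mathbb{E}_{\alpha}\times\mathbb{E}_{\beta}\xleftarrow{p_1}\mathbb{E}'\xrightarrow{p_2}\mathbb{E}''\xrightarrow{p_3}\mathbb{E}_{\alpha+\beta}$. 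Concretely, a flag of type $\mathbf{y}_1\mathbf{y}_2$ in $\mathbb{K}^{\alpha+\beta}$ contains a distinguished $Q_0$-graded subspace $W$ of dimension vector $\beta$ (the lower half of the filtration), so that the corresponding piece of $\tilde F_{\mathbf{y}_1\mathbf{y}_2}$ sitting over $\mathbb{E}''$ is smooth with fibre $\tilde F_{\mathbf{y}_1}\times \tilde F_{\mathbf{y}_2}$ after descending through $p_2$. Proper base change together with the factorization $\pi_{\mathbf{y}_1\mathbf{y}_2}=p_3\circ(\text{projection})$ then yields $\mathbf{m}_{\alpha,\beta}(\mathcal{L}_{\mathbf{y}_1}\boxtimes\mathcal{L}_{\mathbf{y}_2})\cong \mathcal{L}_{\mathbf{y}_1\mathbf{y}_2}[c](c/2)$ for an explicit integer $c$ depending only on dimensions, giving the first inclusion.

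For the restriction part, I apply proper base change to the Cartesian square formed by $i:F_{\alpha,\beta}\hookrightarrow \mathbb{E}_{\alpha+\beta}$ and $\pi_{\mathbf{y}}:\tilde F_{\mathbf{y}}\to \mathbb{E}_{\alpha+\beta}$. Setting $Z:=F_{\alpha,\beta}\times_{\mathbb{E}_{\alpha+\beta}}\tilde F_{\mathbf{y}}$, the computation reduces to understanding $\kappa_!\bar\pi_!\mathbf{1}_Z$ up to the shifts/twists appearing in $\mathbf{\Delta}_{\alpha,\beta}$. I then stratify $Z$ by recording, for a flag $\phi=(V^k\supset\cdots\supset V^0)$ lying over a point of $F_{\alpha,\beta}$, the dimension vectors of $V^l\cap W$; the possible sequences are indexed by pairs $(\mathbf{y}',\mathbf{y}'')$ with $\mathbf{y}'\in Y_{\alpha}$, $\mathbf{y}''\in Y_{\beta}$ that arise as compatible ``splittings'' of $\mathbf{y}$. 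Each locally closed stratum maps to $\mathbb{E}_{\alpha}\times \mathbb{E}_{\beta}$ through an iterated vector bundle whose total push-forward is $\mathcal{L}_{\mathbf{y}'}\boxtimes \mathcal{L}_{\mathbf{y}''}$ up to an explicit shift and twist.

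The main obstacle is upgrading the resulting stratification from yielding a filtration of $\mathbf{\Delta}_{\alpha,\beta}(\mathcal{L}_{\mathbf{y}})$ to giving an actual direct sum decomposition into pieces of the required form $\mathcal{L}_{\mathbf{y}'}\boxtimes\mathcal{L}_{\mathbf{y}''}[d](d/2)$. This is where semisimplicity (Lemma \ref{lemma_2}) is essential: $\mathbf{\Delta}_{\alpha,\beta}(\mathcal{L}_{\mathbf{y}})$ is automatically semisimple, so one only needs to identify the graded pieces of the stratification, and the vanishing of potential extensions between strata is forced by the decomposition theorem together with $\iota$-purity. The bookkeeping of Tate twists and cohomological shifts in each stratum (they match the vector bundle ranks appearing in $\kappa$ and the dimensions of the flag varieties) is tedious but algorithmic, and is already carried out in \cite[Section 9]{Lusztigbook}.
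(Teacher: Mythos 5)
Your proposal is correct, but it takes a genuinely different route from the paper's own proof. The paper reduces the sheaf-level claim to a function-level claim: it records that $\chi^F(\mathcal{L}_{\alpha,\mathbf{y}})=(-v)^{d_{\mathbf{y}}}1_{\alpha,\mathbf{y}}$, observes that $\mathbf{m}_{\alpha,\beta}$ and $\mathbf{\Delta}_{\alpha,\beta}$ applied to objects of the $\mathcal{Q}$'s are semisimple (the same Lemmas \ref{lemma_1} and \ref{lemma_2} you use), and then invokes the injectivity of $\chi=\prod_n\chi^{F^n}$ (Theorem \ref{injective}) to reduce everything to showing that $\underline{m}_{\alpha,\beta}$ and $\delta_{\alpha,\beta}$ preserve the subspaces $\mathcal{F}_{\bullet}$ spanned by the $1_{\bullet,\mathbf{y}}$. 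That function-level statement is established by writing $1_{\alpha,\mathbf{y}}=(1_{S_{i_k}})^{a_k}*\cdots*(1_{S_{i_1}})^{a_1}$ and appealing to Green's theorem together with Lemma \ref{coincide}. You instead carry out Lusztig's direct geometric analysis of the functors on the generators $\mathcal{L}_{\mathbf{y}}$: for $\mathbf{m}_{\alpha,\beta}$, identification of $\mathbf{m}_{\alpha,\beta}(\mathcal{L}_{\mathbf{y}_1}\boxtimes\mathcal{L}_{\mathbf{y}_2})$ with a shifted $\mathcal{L}$-sheaf of concatenated type via proper base change through the induction diagram; for $\mathbf{\Delta}_{\alpha,\beta}$, stratification of $F_{\alpha,\beta}\times_{\mathbb{E}_{\alpha+\beta}}\tilde F_{\mathbf{y}}$ by intersection types, with semisimplicity (Lemma \ref{lemma_2}) used to convert the filtration coming from the stratification into a direct sum — this is the argument of \cite[Section 9]{Lusztigbook}, as you say. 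The paper's proof is shorter once the trace-map machinery of Sections 3--4 is in place and sidesteps the stratification bookkeeping entirely; yours is self-contained in the geometric setting, closer to the classical reference, and in fact produces the explicit decompositions of $\mathbf{m}_{\alpha,\beta}(\mathcal{L}_{\mathbf{y}_1}\boxtimes\mathcal{L}_{\mathbf{y}_2})$ and $\mathbf{\Delta}_{\alpha,\beta}(\mathcal{L}_{\mathbf{y}})$ rather than mere containment. One convention point worth flagging: since in the induction diagram the submodule $W$ has dimension vector $\beta$ while $V^1/V^0$ is the innermost graded piece of a flag, the flag type realizing $\mathbf{m}_{\alpha,\beta}(\mathcal{L}_{\mathbf{y}_1}\boxtimes\mathcal{L}_{\mathbf{y}_2})$ is the concatenation with $\mathbf{y}_2$ placed first (the $\beta$-part forming the lower half of the flag), not $\mathbf{y}_1\mathbf{y}_2$ as written; this is purely cosmetic and does not affect the validity of your argument.
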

\begin{proof}
By the definition of the trace map, $\chi^F(\mathbf{1}_{\tilde{F}_{\alpha , \mathbf{y}}})=1_{\tilde{F}_{\alpha , \mathbf{y}}}$.   By Theorem \ref{tool}, we obtain
$$\chi^F(\mathcal{L}_{\alpha , \mathbf{y}})=\chi^F(\pi_{\mathbf{y}})_!(\mathbf{1}_{\tilde{F}_{\mathbf{y}}})[d_{\mathbf{y}}](\frac{d_{\mathbf{y}}}{2}))=(-v)^{d_{\mathbf{y}}}1_{\alpha , \mathbf{y}}.$$
For $\mathcal{L_{\alpha}}, \mathcal{L}_{\beta}$ and $\mathcal{L}_{\alpha+\beta}$ in $\mathcal{Q}_{\alpha}$, $\mathcal{Q}_{\beta}$ and $\mathcal{Q}_{\alpha+\beta}$  respectively,   both $\mathbf{m}_{\alpha,\beta}(\mathcal{L}_{\alpha}\boxtimes\mathcal{L}_{\beta})$ and $\mathbf{\Delta}_{\alpha,\beta}(\mathcal{L}_{\alpha+\beta})$ are semisimple by Lemma  \ref{lemma_1} and \ref{lemma_2}. By Theorem \ref{injective}, it is enough to prove that $\underline{m}_{\alpha,\beta}(1_{\alpha, \mathbf{y}}, 1_{\beta, \mathbf{y}'})\in \mathcal{F}_{\alpha+\beta}$ and $\delta_{\alpha,\beta}(1_{\alpha+\beta, \mathbf{y}''})\in \mathcal{F}_{\alpha}\otimes\mathcal{F}_{\beta}.$  For any element $\mathbf{y}=(\mathbf{i},\mathbf{a})\in Y_{\alpha}$ with $\mathbf{i}=(i_1,i_2,\ldots,i_k),\,\,i_l\in Q_0$, $\mathbf{a}=(a_1,a_2,\ldots,a_k),\,\,a_l\in\mathbb{N}$, we have $1_{\alpha, \mathbf{y}}=(1_{S_{ik}})^{a_k}*(1_{S_{i_{k-1}}})^{a_{k-1}}*\cdots *(1_{S_{i_1}})^{a_1}$ where $1_{S_{i_j}}=1_{\mo^F_{S_{i_j}}}$ for $1\leq j\leq k$. Hence, the lemma follows from Green's theorem and Lemma \ref{coincide}.
\end{proof}

Following \cite[Section 9]{Lusztig}, the embedding of $\mathcal{Q}_{\alpha }$ into $\mathcal{D}^b_{G_{\alpha },w}(\mathbb{E}_{\alpha })$ induces the following diagram:
$$
\xymatrix{\mathcal{Q}_{\alpha}\boxtimes\mathcal{Q}_{\beta}\ar[d]\ar[r]^-{\mathbf{m}_{\alpha, \beta}}&\mathcal{Q}_{\alpha+\beta}\ar[d] \\
\mathcal{D}^b_{G_{\alpha}\times G_{\beta}, w}(\mathbb{E}_{\alpha}\times \mathbb{E}_{\beta})\ar[r]^-{\mathbf{m}_{\alpha, \beta}}&\mathcal{D}_{G_{\alpha+\beta}, w}(\mathbb{E}_{\alpha+\beta})}
$$
and
$$
\xymatrix{\mathcal{Q}_{\alpha+\beta}\ar[d]\ar[r]^-{\mathbf{\Delta}_{\alpha, \beta}}&\mathcal{Q}_{\alpha}\boxtimes\mathcal{Q}_{\beta}\ar[d] \\
\mathcal{D}_{G_{\alpha+\beta}, w}(\mathbb{E}_{\alpha+\beta})\ar[r]^-{\mathbf{\Delta}_{\alpha, \beta}}&\mathcal{D}^b_{G_{\alpha}\times G_{\beta}, w}(\mathbb{E}_{\alpha}\times \mathbb{E}_{\beta}).}
$$
In Section 4, we denote by $\mathcal{R}_{\alpha}=\mathcal{R}_{im,\alpha}$ the additive category of complexes isomorphic to sums of shifts of simple perverse sheaves in $\md_{im, G_{\alpha}, w}(\mathbb{E}_{\alpha})$,
$\mathcal{R}_{\alpha,\beta}=\mathcal{R}_{im, \alpha,\beta}$ the additive category of complexes isomorphic to sums of shifts of simple perverse sheaves in $\md_{im, G_{\alpha}, w}(\mathbb{E}_{\alpha}\times \mathbb{E}_{\beta})$
and
we obtain the commutative diagram
$$
\xymatrix{\mathcal{R}_{\alpha,\beta}\ar[d]^{\mathbf{\Delta}}\ar[r]^-{\mathbf{m}_{\alpha, \beta}}&\mathcal{R}_{\alpha+\beta}\ar[d]^-{\mathbf{\Delta}_{\alpha, \beta}}\\
{\bigoplus}_{\lambda\in \mathcal{N}}\mathcal{R}_{\lambda}\ar[r]^{\mathbf{m}}&\mathcal{R}_{\alpha',\beta'}}
$$
where $\lambda$ and $\mathcal{N}$ are defined as in Section 3. As a natural corollary, the diagram induces the following commutative diagram:
$$
\xymatrix{&\mathcal{Q}_{\alpha}\boxtimes\mathcal{Q}_{\beta}\ar[ld]\ar[dd]^(0.3){\mathbf{\Delta}}\ar[rr]^-{\mathbf{m}_{\alpha, \beta}}&&\mathcal{Q}_{\alpha+\beta}\ar[ld]\ar[dd]^-{\mathbf{\Delta}_{\alpha, \beta}}\\
\mathcal{R}_{\alpha,\beta}\ar[dd]^{\mathbf{\Delta}}\ar[rr]^(.7){\mathbf{m}_{\alpha, \beta}}&&\mathcal{R}_{\alpha+\beta}\ar[dd]^(.3){\mathbf{\Delta}_{\alpha, \beta}}&\\
&\boxtimes_{\lambda\in \mathcal{N}}\mathcal{Q}_{\lambda}\ar[rr]^(.3){\mathbf{m}}\ar[ld]&&\mathcal{Q}_{\alpha'}\boxtimes\mathcal{Q}_{\beta'}\ar[ld]\\
{\bigoplus}_{\lambda\in \mathcal{N}}\mathcal{R}_{\lambda}\ar[rr]^{\mathbf{m}}&&\mathcal{R}_{\alpha',\beta'}&
}.
$$
With the multiplication and comultiplication induced by $\mathfrak{m}_{\alpha, \beta}^t$ and $\Delta_{\alpha,\beta}^t$, respectively, $K(\mathcal{Q})$ can be endowed with the structures of algebra and coalgebra.

\begin{Prop}
$K(\mathcal{Q})$ is the $\mathbb{A}$-submodule of $\mathbf{I}^{tw}_{w}$ with the structure of a bialgebra.
\end{Prop}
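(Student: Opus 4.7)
The plan is to establish the proposition in three stages: first, embed $K(\mathcal{Q})$ as an $\mathbb{A}$-submodule of $\mathbf{I}^{tw}_w$; second, verify that the multiplication $\mathfrak{m}^t$ and comultiplication $\Delta^t$ on $\mathbf{I}^{tw}_w$ restrict to $K(\mathcal{Q})$; and third, deduce the bialgebra compatibility from the main categorified Green formula (Theorem \ref{thm_main}).

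For the first stage, I would use the paper's earlier observation that $\mathcal{P}_\alpha \subseteq \mathcal{I}_{w,\alpha}$: the Lusztig sheaves $\mathcal{L}_{\mathbf{y}}$ are $\iota$-pure semisimple perverse sheaves of weight $0$ (they are proper pushforwards of constant sheaves on smooth flag varieties with suitable shifts and Tate twists, hence $\iota$-pure of integer weight by BBD purity). Consequently every object of $\mathcal{Q}_\alpha$ is obtained from objects of $\mathcal{I}_{w,\alpha}$ by shifts and half-integer Tate twists, so that after identifying the $\mathbb{A}$-action $v[\mathcal{L}]=[\mathcal{L}[1](\tfrac{1}{2})]$ on both sides, the natural embedding $\mathcal{Q}_\alpha \hookrightarrow \md^b_{im,G_\alpha,w}(\mathbb{E}_\alpha)$ induces an $\mathbb{A}$-linear map $K_\alpha \to K_{im,G_\alpha,w}(\mathbb{E}_\alpha)$, whose image lies inside $\mathbf{I}_w$. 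The embedding is injective because the basis elements of $K_\alpha$ coming from isomorphism classes of $\iota$-mixed simple perverse sheaves in $\mathcal{P}_\alpha$ remain linearly independent in $\mathbf{I}_w$.

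For the second stage, the key input is the lemma immediately above the proposition, which asserts $\mathbf{m}_{\alpha,\beta}(\mathcal{Q}_\alpha \boxtimes \mathcal{Q}_\beta) \subseteq \mathcal{Q}_{\alpha+\beta}$ and $\mathbf{\Delta}_{\alpha,\beta}(\mathcal{Q}_{\alpha+\beta}) \subseteq \mathcal{Q}_\alpha \boxtimes \mathcal{Q}_\beta$. Passing to Grothendieck groups and twisting by the appropriate shifts and Tate twists converts $\mathbf{m}_{\alpha,\beta}$ and $\mathbf{\Delta}_{\alpha,\beta}$ into $\mathfrak{m}^t_{\alpha,\beta}$ and $\Delta^t_{\alpha,\beta}$; since these twists only change the $\mathbb{A}$-degree and preserve the subcategory $\mathcal{Q}$, the subring property and subcoalgebra property of $K(\mathcal{Q})$ inside $\mathbf{I}^{tw}_w$ follow at once. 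Here it is important that the bilinear form $\{\alpha,\beta\}$ appearing in the twist of $\mathfrak{m}_{\alpha,\beta}$ and $\langle \alpha,\beta\rangle$ in the twist of $\Delta_{\alpha,\beta}$ do not interact with the construction of $\mathcal{Q}$ beyond a uniform shift/Tate twist.

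For the third stage, the bialgebra compatibility $\Delta^t \circ \mathfrak{m}^t = \mathfrak{m}^t \circ \Delta^t$ holds on $\mathbf{I}^{tw}_w$ by Theorem \ref{sub_I_K} combined with Theorem \ref{thm_main}. Since $K(\mathcal{Q})$ is now verified to be a sub-$\mathbb{A}$-module closed under both operations, the same identity descends to $K(\mathcal{Q})$, endowing it with the induced bialgebra structure. The main potential obstacle is a bookkeeping one rather than a conceptual one: one must check carefully that the half-integer Tate twists in $\mathfrak{m}^t$ and $\Delta^t$ (involving $\sqrt{q}$) do not take one outside the integer-weight subcategory $\md^b_{im,G,w}$, i.e.\ that after the twists the resulting objects still have integer weights. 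This follows because the Euler form $\langle \alpha,\beta\rangle$ and the quantity $\{\alpha,\beta\}$ are integers, so the shift-twist combination $[n](\tfrac{n}{2})$ always preserves both the Tate integrality and the subcategory $\mathcal{Q}$. Once this is confirmed, the proposition follows immediately.
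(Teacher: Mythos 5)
Your argument matches the paper's intended route: the paper leaves this proposition essentially without a written proof, because it follows directly from (i) the observation that $\mathcal{P}_\alpha\subseteq\mathcal{I}_{w,\alpha}$, which gives the $\mathbb{A}$-module embedding $K(\mathcal{Q})\hookrightarrow\mathbf{I}^{tw}_w$, (ii) the lemma immediately preceding the proposition that $\mathbf{m}_{\alpha,\beta}$ and $\mathbf{\Delta}_{\alpha,\beta}$ preserve the $\mathcal{Q}_\alpha$'s, and (iii) the compatibility $\Delta^t\circ\mathfrak{m}^t=\mathfrak{m}^t\circ\Delta^t$ already established on $\mathbf{I}^{tw}_w$ via Theorems~\ref{sub_I_K} and~\ref{thm_main}, all of which you invoke in the same order. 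Your additional bookkeeping remark that the half-integer Tate twist $(\tfrac{n}{2})$ shifts weight by the integer $-n$ and hence preserves integer weights is a correct and slightly more explicit version of what the paper takes for granted.
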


In the same way, we can define the antipode over $K(\mathcal{Q})$ as follows. The functors
$$
\mathbf{m}^r_{\alpha_1,\cdots,\alpha_r}:\mathcal{D}^b_{\prod_{i=1}^rG_{\alpha_i}, w}(\prod_{i=1}^r\mathbb{E}_{\alpha_i})\rightarrow \mathcal{D}^{b}_{G_{\alpha_1+\cdots+\alpha_r},w}(\mathbb{E}_{\alpha_1+\cdots+\alpha_r})
$$
and
$$
\mathbf{\Delta}^r_{\alpha_1,\cdots,\alpha_r}:\mathcal{D}^{b}_{G_{\alpha_1+\cdots+\alpha_r},w}(\mathbb{E}_{\alpha_1+\cdots+\alpha_r})\rightarrow \mathcal{D}^b_{\prod_{i=1}^rG_{\alpha_i}, w}(\prod_{i=1}^r\mathbb{E}_{\alpha_i})
$$
induce the functors (we use the same notations for convenience, this should not cause any confusion by contexts.)
$$
\mathbf{m}^r_{\alpha_1,\cdots,\alpha_r}:\mathcal{Q}_{\alpha_1}\boxtimes\cdots\boxtimes\mathcal{Q}_{\alpha_r}\rightarrow \mathcal{Q}_{\alpha_1+\cdots+\alpha_r}
$$
and
$$
\mathbf{\Delta}^r_{\alpha_1,\cdots,\alpha_r}:\mathcal{Q}_{\alpha_1+\cdots+\alpha_r}\rightarrow \mathcal{Q}_{\alpha_1}\boxtimes\cdots\boxtimes\mathcal{Q}_{\alpha_r}
$$
with the commutative diagrams
$$\xymatrix{\mathcal{Q}_{\alpha_1}\boxtimes\cdots\boxtimes\mathcal{Q}_{\alpha_r}\ar[r]\ar[d]& \mathcal{Q}_{\alpha_1+\cdots+\alpha_r}\ar[d]\\
\mathcal{R}_{\alpha_1,\cdots,\alpha_r}\ar[r]& \mathcal{R}_{\alpha_1+\cdots+\alpha_r}}
$$
and
$$
\xymatrix{\mathcal{Q}_{\alpha_1+\cdots+\alpha_r}\ar[r]\ar[d]& \mathcal{Q}_{\alpha_1}\boxtimes\cdots\boxtimes\mathcal{Q}_{\alpha_r}\ar[d]\\
\mathcal{R}_{\alpha_1+\cdots+\alpha_r}\ar[r]&\mathcal{R}_{\alpha_1,\cdots,\alpha_r} .}
$$
Now the functor $\mathbf{S}:\mathcal{D}^b_{G_{\alpha},w}(\mathbb{E}_{\alpha})\rightarrow \mathcal{D}^b_{G_{\alpha},w}(\mathbb{E}_{\alpha})$ in Section 4 induces a functor $\mathbf{S}:\mathcal{Q}_{\alpha}\rightarrow \mathcal{Q}_{\alpha}$ with the commutative diagram
$$
\xymatrix{\mathcal{Q}_{\alpha}\ar[d]\ar[r]^{\mathbf{S}}&\mathcal{Q}_{\alpha}\ar[d]\\
\mathcal{R}_{\alpha}\ar[r]^{\mathbf{S}}&\mathcal{R}_{\alpha}.}
$$
As in Proposition \ref{antipode2}, we obtain the antipode over $K(\mathcal{Q}).$
\begin{Prop}
There exists a map $S:K(\mathcal{Q})\rightarrow K(\mathcal{Q})$ such that
$$\mathfrak{m}^t(S\otimes 1)\Delta^t([\mathcal{L}])=\mathfrak{m}^t(1\otimes S)\Delta^t([\mathcal{L}])=\left\{
\begin{array}{c}
\textrm{$0$ \,\,\,\,\,\,if $[\mathcal{L}]\neq[\mathbf{1}_0]$},\\
\textrm{$[\mathbf{1}_0]$ \,\,\,\,\,\,if $[\mathcal{L}]=[\mathbf{1}_0]$}.
\end{array}\right.$$

\end{Prop}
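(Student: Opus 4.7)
The plan is to construct $S$ on $K(\mathcal{Q})$ via the functor $\mathbf{S}\colon \mathcal{Q}_{\alpha}\to\mathcal{Q}_{\alpha}$ obtained in the paragraphs immediately preceding the statement, and then to deduce the antipode identity by restricting the corresponding identity from Proposition \ref{antipode2}, which was already established at the level of the bigger bialgebra $\mathbf{K}^{tw}_{w}$.

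First I would verify that the formula
\[
\mathbf{S}(\mathcal{L})=\bigoplus_{r\geq 1}\bigoplus_{\alpha_1,\cdots,\alpha_r\neq 0}\mathbf{m}^{t,r}_{\alpha_1,\cdots,\alpha_r}\circ\mathbf{\Delta}^{t,r}_{\alpha_1,\cdots,\alpha_r}(\mathcal{L})[r]
\]
defines a functor $\mathcal{Q}_{\alpha}\to\mathcal{Q}_{\alpha}$. This is essentially the content of the preceding commutative squares: each $\mathbf{m}^{t,r}$ and each $\mathbf{\Delta}^{t,r}$ preserves the subcategories $\mathcal{Q}$ by the lemma that $\mathbf{m}_{\alpha,\beta}(\mathcal{Q}_\alpha\boxtimes\mathcal{Q}_\beta)\subseteq\mathcal{Q}_{\alpha+\beta}$ and $\mathbf{\Delta}_{\alpha,\beta}(\mathcal{Q}_{\alpha+\beta})\subseteq\mathcal{Q}_\alpha\boxtimes\mathcal{Q}_\beta$, iterated. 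The sum is locally finite by weight/dimension reasons (only finitely many decompositions $\alpha_1+\cdots+\alpha_r=\alpha$ with nonzero parts give a nonzero contribution when paired with a fixed $\mathcal{L}$, exactly as in the proof of the analogous claim for $\mathbf{K}^{tw}_{w}$). The induced $\mathbb{A}$-linear map on Grothendieck groups is the desired $S\colon K(\mathcal{Q})\to K(\mathcal{Q})$.

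Next I would observe that the natural inclusions $K(\mathcal{Q})\hookrightarrow \mathbf{I}^{tw}_{w}\hookrightarrow \mathbf{K}^{tw}_{w}$ intertwine all the relevant structure maps: $\mathfrak{m}^t$, $\Delta^t$, and $S$ on the small algebra agree with the restriction of the corresponding operators on the large one. For $\mathfrak{m}^t$ and $\Delta^t$ this is the content of the commutative squares displayed right before the statement; for $S$ it follows by combining these squares with the definition of $\mathbf{S}$. The analogous intertwining for $S\hat\otimes 1$ and $1\hat\otimes S$ on two-factor Grothendieck groups follows by the same argument applied to one tensor factor at a time, using $\mathbf{m}_{(\alpha_1,\beta),\dots,(\alpha_r,0)}^{t,r}$ and $\mathbf{\Delta}^{t,r}_{(\alpha_1,\beta),\dots,(\alpha_r,0)}$ preserving $\mathcal{Q}\boxtimes\mathcal{Q}$.

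Finally, with all structure preserved by the inclusion, the identity
\[
\mathfrak{m}^t(S\hat{\otimes} 1)\Delta^t([\mathcal{L}])=\mathfrak{m}^t(1\hat{\otimes} S)\Delta^t([\mathcal{L}])=\begin{cases} 0 & [\mathcal{L}]\neq [\mathbf{1}_0], \\ [\mathbf{1}_0] & [\mathcal{L}]=[\mathbf{1}_0], \end{cases}
\]
in $\mathbf{K}^{tw}_{w}$ established in Proposition \ref{antipode2} restricts verbatim to $K(\mathcal{Q})$, and the two sides coincide with the expressions in the proposition (the symbol $\otimes$ in the statement meaning the same external product $\hat{\otimes}$ used throughout Section 4). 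The only nontrivial step is the structure-preservation check of the second paragraph; once that is in place the conclusion is automatic. I expect the main obstacle to be bookkeeping the Tate twists and shifts in the twisted versions $\mathbf{m}^{t,r}$, $\mathbf{\Delta}^{t,r}$ so that the restriction of $S$ to $\mathcal{Q}_{\alpha}$ really lands back in $\mathcal{Q}_{\alpha}$ and matches the ambient antipode on $\mathbf{K}^{tw}_{w}$ under the inclusion, rather than any genuinely new computation.
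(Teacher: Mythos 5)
Your proposal is correct and matches the paper's intended argument: the paper itself gives no separate proof for this proposition, merely recording (in the paragraphs immediately preceding it) the commutative diagrams showing $\mathbf{m}^{r}$, $\mathbf{\Delta}^{r}$, and hence $\mathbf{S}$ restrict to the subcategories $\mathcal{Q}_{\alpha}$, and then invoking "As in Proposition~\ref{antipode2}." Your three-step plan — check $\mathbf{S}$ restricts to a functor on $\mathcal{Q}_{\alpha}$, check the inclusions $K(\mathcal{Q})\hookrightarrow\mathbf{I}^{tw}_{w}\hookrightarrow\mathbf{K}^{tw}_{w}$ intertwine $\mathfrak{m}^t$, $\Delta^t$, $S$, $S\hat\otimes 1$, $1\hat\otimes S$, and then restrict the identity of Proposition~\ref{antipode2} — is exactly the content implicit in that reference and is the right way to fill in the omitted details.
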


\section{The trace map and $F$-invariant representations}
In this section, we study the image of the trace map in Section 4. Let $\mathbb{K}=\overline{\mathbb{F}}_q$ and $F$ be the Frobenius automorphism. Let $Q$ be a quiver and $X=\mathbb{E}_{\alpha }$ be the variety of $\mathbb{K}Q$-modules of dimension vector $\alpha .$  We recall some definitions and notations in \cite{Hua2}. The map $F$ induces the morphism $F_{X}: X\rightarrow X$ by sending $x$ to $F_X(x)$, i.e., sending representations $M(x)$ to $M(F_X(x))$. Sometime we write $M(x)^{[q]}$ for $M(F_X(x))$. There exists the smallest positive integer $r$ such that $M(x)\cong M(x)^{[q^r]}$ over $\mathbb{K}=\overline{\mathbb{F}}_q.$ We call $\mathbb{F}_{q^r}$ the minimal field of definition of $M(x).$

Let $(\mathcal{F}, j_{\mathcal{F}})$ be a Weil complex in $\md^b_{G,w}(X)$ where $G=G_{\alpha }.$ If $x\in X(\mathbb{F}_q)$, then by definition, we have
$$\xymatrix{\mathcal{F}_x\ar[rr]^(0.3){(j_{\mathcal{F}}^{-1})_x}&&(F_X^*(\mathcal{F}))_x=\mathcal{F}_{F_X(x)}=\mathcal{F}_x.}$$
In general, if $x\in X(\mathbb{F}_{q^s})$ for $s\in \mathbb{N}$, we have
$$\xymatrix{\mathcal{F}_x\ar[r]^(0.4){(j_{\mathcal{F}}^{-1})_x}&\mathcal{F}_{F_X(x)}\ar[rr]^(0.6){(j_{F^*_X(\mathcal{F})}^{-1})_{F_X(x)}}&&\mathcal{F}_{F^2_X(x)}\ar[r]&\cdots\ar[r]&\mathcal{F}_{F^s_X(x)}=\mathcal{F}_x.}$$
We denote by $\phi_x$ the composition of these maps. In the same way, we have
$$
\xymatrix{(F_X^*(\mathcal{F}))_x\ar[rrr]^{(j_{\mathcal{F}}^{-1})_x\circ\phi_x\circ (j_{\mathcal{F}})_x}&&&(F_X^*(\mathcal{F}))_x}.
$$
Then we obtain the following commutative diagram
$$
\xymatrix{\mathcal{F}_x\ar[d]_{(j_{\mathcal{F}}^{-1})_x}\ar[rrr]^{\phi_x}&&&\mathcal{F}_x\ar[d]^{(j_{\mathcal{F}}^{-1})_x}\\
\mathcal{F}_{F_X(x)}\ar[rrr]^{(j_{\mathcal{F}}^{-1})_x\circ\phi_x\circ (j_{\mathcal{F}})_x}&&&\mathcal{F}_{F_X(x)}}
$$
and then
$$
\xymatrix{\mathcal{H}^i(\mathcal{F})_{\mid x}\ar[rr]^{F^s_{i,x}}\ar[d]_{\rho_x}&&\mathcal{H}^i(\mathcal{F})_{\mid x}\ar[d]^{\rho_x}\\
\mathcal{H}^i(\mathcal{F})_{\mid F_X(x)}\ar[rr]^{F^s_{i,F_X(x)}}&&\mathcal{H}^i(\mathcal{F})_{\mid F_X(x)},}
$$
where $\rho_x$ is the isomorphism of the stalk at $x$ of the $i$-th cohomology sheaves induced by $j_{\mathcal{F}}^{-1}$.
As a result, we obtain the characterization of the image of the trace map.
\begin{theorem}
For $s\geq 1$, the image of $\chi^{F^s}: \mathbf{K}_{w}\rightarrow \mathcal{CF}^{F^s}(Q)$ is the subspace spanned by the the functions $f\in \mathcal{CF}^{F^s}(Q)$ satisfying $f(x)=f(F_X(x))$ for any dimension vector $\alpha$, $X=\mathbb{E}_{\alpha}$ and $x\in \mathbb{E}^{F^s}_{\alpha}$.
\end{theorem}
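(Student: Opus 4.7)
The plan is to prove the two inclusions separately. For the inclusion that the image of $\chi^{F^s}$ is contained in the $F_X$-invariant subspace, the commutative diagram of stalks displayed immediately before the theorem already does the work: for $(\mathcal{F},j)\in\mathcal{D}^b_{G_\alpha,w}(\mathbb{E}_\alpha)$ and $x\in\mathbb{E}^{F^s}_\alpha$, the stalk automorphism $F^s_{i,x}$ on $\mathcal{H}^i(\mathcal{F})_{\mid x}$ is conjugate via $\rho_x$ to $F^s_{i,F_X(x)}$ on $\mathcal{H}^i(\mathcal{F})_{\mid F_X(x)}$, so the two have identical traces. Summing the alternating traces yields $\chi^{F^s}_{\mathcal{F}}(x)=\chi^{F^s}_{\mathcal{F}}(F_X(x))$.

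For the reverse inclusion I first identify a natural spanning set of the $F_X$-invariant subspace. Any $f\in\mathcal{CF}^{F^s}(Q)$ is by construction $G^{F^s}_\alpha$-invariant, so the extra condition forces $f$ to be constant on the orbits of the group generated by $G^{F^s}_\alpha$ and $F_X$ on $\mathbb{E}^{F^s}_\alpha$. The stabilizer in $G_\alpha$ of any $x\in\mathbb{E}_\alpha$ is the unit group $\mathrm{Aut}_{\mathbb{K}Q}(M(x))$ of the finite-dimensional $\mathbb{K}$-algebra $\mathrm{End}_{\mathbb{K}Q}(M(x))$; over an algebraically closed field such unit groups are always connected, being extensions of a product of general linear groups by a unipotent radical. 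Hence the Lang--Steinberg theorem produces a canonical $F_X$-equivariant bijection between the $G^{F^s}_\alpha$-orbits of $\mathbb{E}^{F^s}_\alpha$ and the $F^s$-stable $G_\alpha$-orbits of $\mathbb{E}_\alpha$. Consequently the $F_X$-invariant subspace is spanned by the characteristic functions $1_{\widetilde O\cap\mathbb{E}^{F^s}_\alpha}$, as $\widetilde O=O\cup F_X(O)\cup\cdots\cup F_X^{r-1}(O)$ ranges over $F_X$-orbits of $F^s$-stable $G_\alpha$-orbits (where $r\mid s$ is the minimal period of $O$).

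To realize each such basis function as a trace I set $\mathcal{F}_{\widetilde O}:=(j_{\widetilde O})_{!}\,\overline{\mathbb{Q}}_l$, where $j_{\widetilde O}:\widetilde O\hookrightarrow\mathbb{E}_\alpha$ is the inclusion of the $G_\alpha$-stable, $F_X$-stable, locally closed subvariety $\widetilde O$. This complex is a $G_\alpha$-equivariant $\iota$-mixed (in fact $\iota$-pure of weight $0$) Weil complex on $\mathbb{E}_\alpha$, carrying the canonical Weil structure coming from $F_X^{-1}(\widetilde O)=\widetilde O$. The stalk $(\mathcal{F}_{\widetilde O})_x$ vanishes off $\widetilde O$ and equals $\overline{\mathbb{Q}}_l$ on $\widetilde O$ with the induced Frobenius acting as the identity; consequently $\chi^{F^s}_{\mathcal{F}_{\widetilde O}}=1_{\widetilde O\cap\mathbb{E}^{F^s}_\alpha}$. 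Since these $1_{\widetilde O\cap\mathbb{E}^{F^s}_\alpha}$ span the $F_X$-invariant subspace, the map $\chi^{F^s}$ surjects onto it, completing the proof.

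The main point requiring care is the orbit identification, i.e., the connectedness of $\mathrm{Aut}_{\mathbb{K}Q}(M)$ that feeds Lang--Steinberg; conceptually this is where Kac's and Hua's results on $\mathbb{F}_{q^s}$-rational representations enter, by furnishing a transparent parametrization of $F^s$-stable $G_\alpha$-orbits in terms of isomorphism classes of $Q$-representations defined over $\mathbb{F}_{q^s}$ modulo Galois twist. One minor technical check, namely that $(j_{\widetilde O})_!\overline{\mathbb{Q}}_l$ really lies in $\mathcal{D}^b_{G_\alpha,w}(\mathbb{E}_\alpha)$ and that $j_!$ preserves $\iota$-mixedness, is immediate from the standard formalism of \cite{KW}.
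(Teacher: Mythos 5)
Your forward inclusion is identical to the paper's and is correct: the commutative diagram of stalk automorphisms yields $F^s_{i,F_X(x)}=\rho_x\circ F^s_{i,x}\circ\rho_x^{-1}$, hence equality of alternating traces. Your identification of the $F_X$-invariant subspace as the $\overline{\mathbb{Q}}_l$-span of the characteristic functions of $\widetilde{O}\cap\mathbb{E}^{F^s}_\alpha$, $\widetilde{O}$ a $G_\alpha$-$F_X$-orbit, is also correct; the Lang--Steinberg/connected-stabilizer route is a transparent way to package what the paper obtains by citing Hua, and your choice to push forward the unshifted constant sheaf $(j_{\widetilde{O}})_!\overline{\mathbb{Q}}_l$ rather than $(j_{\widetilde{O}})_!(\overline{\mathbb{Q}}_l[\dim\widetilde{O}])$ even avoids a sign the paper glosses over.

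However, the last step has a genuine gap. You show that each characteristic function $1_{\widetilde{O}\cap\mathbb{E}^{F^s}_\alpha}$ lies in the image of $\chi^{F^s}$ and then conclude ``since these span the $F_X$-invariant subspace, the map surjects onto it.'' That inference is invalid: $\chi^{F^s}$ is only $\mathbb{Z}$-linear from the Grothendieck group $\mathbf{K}_w$, so its image is a priori a $\mathbb{Z}$-submodule. Containing a $\overline{\mathbb{Q}}_l$-spanning set does not make the image the whole $\overline{\mathbb{Q}}_l$-subspace; you have only produced $\mathbb{Z}$-linear combinations $\sum n_{\widetilde{O}}\,1_{\widetilde{O}\cap\mathbb{E}^{F^s}_\alpha}$ with $n_{\widetilde{O}}\in\mathbb{Z}$. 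To finish, one must realize an arbitrary scalar $b\in\overline{\mathbb{Q}}_l$ in the image. The paper does this by twisting: choose a rank-one Weil sheaf $\overline{\mathbb{Q}}_l^{(\sqrt[s]{b})}$ on $\mathrm{Spec}(\mathbb{F}_q)$ on which Frobenius acts by the scalar $\sqrt[s]{b}$, pull it back to $\mathbb{E}_\alpha$ to get $\mathcal{L}_s(b)$, and tensor: then $\chi^{F^s}\bigl([\mathcal{F}_{\widetilde{O}}\otimes\mathcal{L}_s(b)]\bigr)=b\cdot 1_{\widetilde{O}\cap\mathbb{E}^{F^s}_\alpha}$, because $F^s$ acts on the stalk by $(\sqrt[s]{b})^s=b$. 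This extra twist is what upgrades ``the image $\mathbb{Z}$-spans'' to ``the image is'' the $F_X$-invariant $\overline{\mathbb{Q}}_l$-subspace. Adding this one paragraph closes the gap, and the rest of your argument stands.
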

\begin{proof}
The above diagram shows that $\mathcal{F}^s_{i, F_X(x)}=\rho_x\circ\mathcal{F}^s_{i,x}\circ\rho_x^{-1}$ and then $$tr(F^s_{i,x}, \mathcal{H}^i(\mathcal{F})_{\mid x})=tr(F^s_{i,F_X(x)}, \mathcal{H}^i(\mathcal{F})_{\mid F_X(x)}).$$
This gives the identity $\chi^{F^s}_{\mathcal{F}}(x)=\chi^{F^s}_{\mathcal{F}}(F_X(x)).$

Conversely, for $s\geq 1$, let $\mathcal{FCF}^{F^s}(Q)$ be the $\overline{\mathbb{Q}}_l$-vector subspace of $\mathcal{CF}^{F^s}(Q)$ generated by the functions $f\in \mathcal{CF}^{F^s}(Q)$ with $f(x)=f(F_X(x))$ for any dimension vector $\alpha$ and $x\in \mathbb{E}^{F^s}_{\alpha}$. Then $\mathcal{FCF}^{F^s}(Q)$ is generated by  characteristic functions $1_{\mo_{F,M}}$ of  $G_{\alpha}$-$F$-orbits $\mo_{F,M}$ for some dimension vector $\alpha$ and $M\in \mathbb{E}^{F^s}_{\alpha}$.  Let $j: \mo_{F,M}\rightarrow \mathbb{E}_{\alpha}$ be the natural embedding. Then the complex $\mathcal{C}_M=j_{!}(\overline{\mathbb{Q}}_l[\mathrm{dim}\mo_{F, M}])$ satisfies that $\chi^{F^t}([\mathcal{C}_M])=1_{\mo^{F^t}_{F, M}}$ for $t\in \mathbb{N}$. In particular, for $t<s$, $\mo^{F^t}_{F, M}=\emptyset$ and $\chi^{F^t}([\mathcal{C}_M])=0.$

Given $b\in \overline{\mathbb{Q}}_l$, then there exists a generalized Weil complex $\overline{\mathbb{Q}}_l^{(b)}$ over $Spec(\mathbb{F}_q)$ such that the Frobenius morphism acts as the multiplication by $b$: $\overline{\mathbb{Q}}_l\rightarrow \overline{\mathbb{Q}}_l$ (see \cite[Chapter 1]{KW}). Consider the natural projection $\pi: X\rightarrow Spec(\mathbb{F}_q)$. The complex $\mathcal{L}_s(b)=\pi^{*}(\overline{\mathbb{Q}}_l^{(\sqrt[s]{b})})$ satisfies that $\chi^{F^s}([\pi^{*}(\overline{\mathbb{Q}}_l^{(\sqrt[s]{b})})])$ is the constant function over $\mathbb{E}^{F^s}_{\alpha}$ with the value $b.$ Then, we obtain
$$
\chi^{F^s}(\mathcal{C}_M\otimes \mathcal{L}_s(b))=b\cdot 1_{\mo_{F,M}}.
$$
Hence, $\chi^{F^s}: \mathbf{K}_{w}\rightarrow \mathcal{FCF}^{F^s}(Q)$ is surjective.

We complete the proof.
\end{proof}

The map $\chi^{F^s}$ can be restricted to $\mathbf{I}_{w}$. Define $\chi^{F^s}_{\mathbf{I}_w}: {_{\mathbb{Q}(v)}\mathbf{I}}_{w}\rightarrow \mathcal{CF}^{F^s}(Q)$ by sending $[M]\otimes f(v)$ to $f((-\sqrt{q})^s)\cdot \chi^{F^s}([M]).$ Set $\chi^{F^s}_{\mathbf{I}_w, \alpha}$ to be the restriction of $\chi^{F^s}_{\mathbf{I}_w}$ to ${_{\mathbb{Q}(v)}K}_{im,G_{\alpha},w}(\mathbb{E}_{\alpha})$ for any dimension vector $\alpha$.
We denote by $$\mathbb{M}^F(\alpha, q^s)=\mathrm{Im} \chi^{F^s}_{\mathbf{I}_w, \alpha}\otimes_{\mathbb{Q}((-\sqrt{q})^s)}\overline{\mathbb{Q}}_l.$$ As the natural corollary of the above theorem, we obtain the following result.
\begin{theorem}
$\mathbb{M}^F(\alpha, q^s)=\mathcal{FCF}^{F^s}(Q).$
\end{theorem}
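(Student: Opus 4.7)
The plan is to follow closely the strategy used in the preceding theorem, while carefully verifying that the auxiliary complexes stay inside the integer-weight subcategory $\mathbf{I}_w$ and handling arbitrary scalars through the $\mathbb{Q}(v)$-module structure instead of through generalized Weil sheaves of non-integer weight.

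For the inclusion $\mathbb{M}^F(\alpha,q^s)\subseteq \mathcal{FCF}^{F^s}(Q)$, I would simply note that $K_{im,G_\alpha,w}(\mathbb{E}_\alpha)$ sits inside $\mathbf{K}_w$, so the preceding theorem already places $\chi^{F^s}(K_{im,G_\alpha,w}(\mathbb{E}_\alpha))$ inside the $\alpha$-graded part of $\mathcal{FCF}^{F^s}(Q)$; scalar extension first to $\mathbb{Q}(v)$ (specialised at $v=(-\sqrt{q})^s$) and then to $\overline{\mathbb{Q}}_l$ stays inside $\mathcal{FCF}^{F^s}(Q)$.

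For the reverse inclusion, by the characterisation recalled in the proof of the preceding theorem, $\mathcal{FCF}^{F^s}(Q)$ is spanned over $\overline{\mathbb{Q}}_l$ by characteristic functions $1_{\mathcal{O}^{F^s}_{F,M}}$ of $G_\alpha$-$F$-orbits. So it suffices to realise every $\overline{\mathbb{Q}}_l$-multiple of such a function as an image. With $j\colon \mathcal{O}_{F,M}\hookrightarrow \mathbb{E}_\alpha$ the natural embedding, set $\mathcal{C}_M = j_!(\overline{\mathbb{Q}}_l[\dim \mathcal{O}_{F,M}])$ as in the earlier argument. The key observation, which was not required there, is that $\mathcal{C}_M$ is $\iota$-mixed of integer weight: the constant sheaf $\overline{\mathbb{Q}}_l$ on $\mathcal{O}_{F,M}$ is $\iota$-pure of weight $0$, and both the shift and the functor $j_!$ preserve $\iota$-mixedness with integer weights. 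Hence $[\mathcal{C}_M]\in K_{im,G_\alpha,w}(\mathbb{E}_\alpha)$, and the trace computation carried out in the preceding proof gives $\chi^{F^s}([\mathcal{C}_M])=(-1)^{\dim \mathcal{O}_{F,M}}\,1_{\mathcal{O}^{F^s}_{F,M}}$.

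The main obstacle is precisely the scalar issue, since the device used in the preceding theorem, tensoring with the generalised Weil sheaf $\mathcal{L}_s(b)=\pi^*(\overline{\mathbb{Q}}_l^{(\sqrt[s]{b})})$, typically produces a complex of non-integer weight and therefore falls outside $\mathbf{I}_w$. This is exactly what the scalar extension $\otimes_{\mathbb{Q}((-\sqrt{q})^s)}\overline{\mathbb{Q}}_l$ built into the definition of $\mathbb{M}^F(\alpha,q^s)$ is designed to repair. Concretely, from the $\mathbb{Q}(v)$-module structure on ${_{\mathbb{Q}(v)}\mathbf{I}}_w$ and the specialisation rule $\chi^{F^s}_{\mathbf{I}_w}([\mathcal{C}_M]\otimes f(v))=f((-\sqrt{q})^s)\,\chi^{F^s}([\mathcal{C}_M])$ one obtains every $\mathbb{Q}((-\sqrt{q})^s)$-multiple of $1_{\mathcal{O}^{F^s}_{F,M}}$ by varying $f\in \mathbb{Q}(v)$; after tensoring with $\overline{\mathbb{Q}}_l$ over $\mathbb{Q}((-\sqrt{q})^s)$ this yields every $\overline{\mathbb{Q}}_l$-multiple. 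Combining this with the spanning statement from the preceding theorem completes the proof.
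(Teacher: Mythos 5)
Your argument is correct and is essentially the filled-in version of what the paper leaves implicit by calling the statement a ``natural corollary'' of the preceding theorem. You correctly identify the one new point that must be checked --- that $\mathcal{C}_M=j_!(\overline{\mathbb{Q}}_l[\dim\mathcal{O}_{F,M}])$ lies in the integer-weight subcategory, so $[\mathcal{C}_M]\in K_{im,G_\alpha,w}(\mathbb{E}_\alpha)$ --- and correctly observe that the scalar-extension $\otimes_{\mathbb{Q}((-\sqrt{q})^s)}\overline{\mathbb{Q}}_l$ in the definition of $\mathbb{M}^F(\alpha,q^s)$ replaces the device of tensoring with $\mathcal{L}_s(b)$ (which would leave $\mathbf{I}_w$). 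The one cosmetic remark: you also correctly record the sign $(-1)^{\dim\mathcal{O}_{F,M}}$ coming from the shift, which the paper silently drops in its proof of the preceding theorem; this has no effect on the spanning statement.
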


Analogous to the notations $M_{Q}(\alpha , q^s), I_{Q}(\alpha , q^s)$ and $A_{Q}(\alpha , q^s)$ in \cite{Hua}, we define the $F_X$-versions as follows:
\begin{eqnarray}
  M^F_{Q}(\alpha , q^s) &=& \mbox{the number of }G_{\alpha}\mbox{-}F_X\mbox{-orbits of  representations  }\nonumber \\
    &&\mbox{of }Q  \mbox{ over } \mathbb{F}_{q^s} \mbox{ with dimension vector } \alpha,     \nonumber   \\
 I^F_{Q}(\alpha , q^s) &=& \mbox{the number of }G_{\alpha}\mbox{-}F_X\mbox{-orbits of indecomposable  }\nonumber \\
    &&\mbox{representations of }Q  \mbox{ over } \mathbb{F}_{q^s} \mbox{ with dimension vector } \alpha,     \nonumber   \\
 A^F_{Q}(\alpha , q^s) &=& \mbox{the number of }G_{\alpha}\mbox{-}F_X\mbox{-orbits of absolutely indecomposable}\nonumber \\
    &&\mbox{representations of }Q  \mbox{ over } \mathbb{F}_{q^s} \mbox{ with dimension vector } \alpha.     \nonumber
\end{eqnarray}

The following proposition is clear.
\begin{Prop}
With the above notations, we have
$$
\mathrm{dim}_{\overline{\mathbb{Q}}_l}\mathbb{M}^F(\alpha, q^s)=M^F_{Q}(\alpha , q^s).
$$
\end{Prop}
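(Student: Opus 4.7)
The plan is to combine the preceding theorem, which identifies $\mathbb{M}^F(\alpha,q^s)$ with $\mathcal{FCF}^{F^s}(Q)_\alpha$, with a direct enumeration of a basis of $\mathcal{FCF}^{F^s}(Q)_\alpha$ indexed by orbits. By definition, $\mathcal{FCF}^{F^s}(Q)_\alpha$ consists of those functions on $\mathbb{E}^{F^s}_\alpha$ which are simultaneously $G_\alpha^{F^s}$-invariant and invariant under the map $f\mapsto f\circ F_X$; equivalently, it is the space of functions invariant under the joint action of $G_\alpha^{F^s}$ and the finite cyclic group $\langle F_X\rangle$ on $\mathbb{E}^{F^s}_\alpha$ (note $F_X^s=\mathrm{id}$ on this set). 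Hence the characteristic functions of the joint $G_\alpha^{F^s}$-$F_X$-orbits on $\mathbb{E}^{F^s}_\alpha$ form a basis, and its dimension equals the number of such orbits.

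Next I would match these joint orbits with the $G_\alpha$-$F_X$-orbits of $\mathbb{F}_{q^s}$-representations featured in the definition of $M_Q^F(\alpha,q^s)$. Recall that $G_\alpha^{F^s}$-orbits on $\mathbb{E}^{F^s}_\alpha$ are exactly isomorphism classes of $\mathbb{F}_{q^s}Q$-modules of dimension vector $\alpha$, as used throughout the paper. Moreover, since the algebraic group $G_\alpha=\prod_iGL(\alpha_i,\mathbb{K})$ is connected, the Lang-Steinberg theorem implies that two $\mathbb{F}_{q^s}$-rational points which are $G_\alpha(\mathbb{K})$-conjugate are already $G_\alpha^{F^s}$-conjugate; thus on $\mathbb{E}^{F^s}_\alpha$ the $G_\alpha$-orbits and the $G_\alpha^{F^s}$-orbits coincide. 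Under these identifications, $F_X$ acts on the set of orbits by the Frobenius twist $[M]\mapsto [M^{[q]}]$, so a joint orbit is precisely a $G_\alpha$-$F_X$-orbit of $\mathbb{F}_{q^s}$-representations.

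Combining the two steps,
$$
\mathrm{dim}_{\overline{\mathbb{Q}}_l}\mathbb{M}^F(\alpha,q^s)
=\mathrm{dim}_{\overline{\mathbb{Q}}_l}\mathcal{FCF}^{F^s}(Q)_\alpha
=\#\{G_\alpha^{F^s}\text{-}F_X\text{-orbits on }\mathbb{E}^{F^s}_\alpha\}
=M_Q^F(\alpha,q^s).
$$
The bulk of the work has already been absorbed into the preceding theorem identifying $\mathbb{M}^F(\alpha,q^s)$ with $\mathcal{FCF}^{F^s}(Q)_\alpha$; the only nontrivial ingredient remaining is the Lang-Steinberg reduction from $G_\alpha(\mathbb{K})$-orbits to $G_\alpha^{F^s}$-orbits on $\mathbb{F}_{q^s}$-points, and I do not expect any serious obstacle beyond recording this standard fact.
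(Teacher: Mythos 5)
Your proof is correct and follows essentially the same (one-line) reasoning the paper has in mind: the preceding theorem identifies $\mathbb{M}^F(\alpha,q^s)$ with $\mathcal{FCF}^{F^s}(Q)$ in weight $\alpha$, and that space visibly has a basis given by the characteristic functions of the joint $G_\alpha^{F^s}$-$F_X$-orbits on $\mathbb{E}_\alpha^{F^s}$, whose number is precisely what $M_Q^F(\alpha,q^s)$ counts. In fact the paper's notation ``$G_\alpha$-$F_X$-orbits of representations of $Q$ over $\mathbb{F}_{q^s}$'' in the definition of $M^F_Q(\alpha,q^s)$ is meant in exactly the same sense as the orbits $\mathcal{O}_{F,M}$ appearing in the proof of the preceding theorem, so the proposition is essentially a tautology; your extra reduction from $G_\alpha(\mathbb{K})$-orbits to $G_\alpha^{F^s}$-orbits via Lang--Steinberg is a true and clarifying fact but not actually required to close the argument.

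One imprecision worth flagging: connectedness of $G_\alpha$ alone is not the reason two $G_\alpha(\mathbb{K})$-conjugate points of $\mathbb{E}_\alpha^{F^s}$ are $G_\alpha^{F^s}$-conjugate. The Lang--Steinberg argument produces a bijection between the $G^{F}$-orbits inside $(G\cdot x)^{F}$ and the $F$-twisted conjugacy classes in the component group of the stabilizer $\mathrm{Stab}_{G}(x)$, so what you actually need is connectedness of the stabilizers. For quiver representations this is automatic: $\mathrm{Stab}_{G_\alpha}(x)=\mathrm{Aut}_{\mathbb{K}Q}(M(x))$ is the unit group of the finite-dimensional algebra $\mathrm{End}_{\mathbb{K}Q}(M(x))$, hence an open (dense) subvariety of an affine space and therefore connected. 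Replacing ``$G_\alpha$ is connected'' with ``the stabilizers $\mathrm{Aut}_{\mathbb{K}Q}(M(x))$ are connected'' makes your Lang--Steinberg step airtight; with this correction the proposal is fine.
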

Let $M^{min}_{Q}(\alpha , q^r)$ be the number of isomorphism classes of representations of $Q$ over $\mathbb{K}=\overline{\mathbb{F}}_q$ with dimension vector $\alpha $ and minimal field of definition $\mathbb{F}_{q^r}.$
By definition, we have
$$
M_Q(\alpha , q^s)=\sum_{r\mid s}M^{min}_Q(\alpha , q^r), \quad \quad M^F_Q(\alpha , q^s)=\sum_{r\mid s}\frac{1}{r}M^{min}_Q(\alpha , q^r).
$$
The M\"{o}bius inversion of the first identity is
$$
M^{min}_Q(\alpha , q^s)=\sum_{r\mid s}\mu(\frac{s}{r}) M_Q(\alpha , q^r).
$$
Then we have
$$
\quad \quad M^F_Q(\alpha , q^s)=\sum_{r\mid s}\frac{1}{r}\sum_{t\mid r}\mu(\frac{r}{t}) M_Q(\alpha , q^t).
$$

In \cite{Hua2}, the author shows that $M_{Q}(\alpha , q)\in \mathbb{Q}[q].$ The following proposition is a direct corollary.
\begin{Prop}
$M^F_{Q}(\alpha , q^s)\in \mathbb{Q}[q].$
\end{Prop}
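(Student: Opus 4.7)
The plan is to read off the conclusion directly from the explicit formula
$$M^F_Q(\alpha , q^s)=\sum_{r\mid s}\frac{1}{r}\sum_{t\mid r}\mu\!\left(\frac{r}{t}\right) M_Q(\alpha , q^t)$$
established just before the proposition, combined with Hua's polynomiality result that $M_Q(\alpha,q)\in\mathbb{Q}[q]$, cited in the immediately preceding proposition.

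First, I would invoke Hua's theorem to write $M_Q(\alpha,q)=P_\alpha(q)$ for some polynomial $P_\alpha\in\mathbb{Q}[T]$. The substitution $T\mapsto T^t$ then shows that, for each divisor $t$ of $s$, the quantity $M_Q(\alpha,q^t)=P_\alpha(q^t)$ is again an element of $\mathbb{Q}[q]$, of degree $t\cdot\deg P_\alpha$. Next I would observe that in the expression for $M^F_Q(\alpha,q^s)$, the outer and inner sums are \emph{finite} (ranging over divisors of $s$), and the coefficients $\frac{1}{r}\mu(r/t)$ are rational numbers depending only on the combinatorics of the divisor poset of $s$. Hence $M^F_Q(\alpha,q^s)$ is a finite $\mathbb{Q}$-linear combination of polynomials $P_\alpha(q^t)\in\mathbb{Q}[q]$, which proves it lies in $\mathbb{Q}[q]$.

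There is essentially no obstacle beyond this: once the Burnside-type counting identity expressing $F$-orbits in terms of the ordinary counts $M_Q(\alpha,q^t)$ over subfields has been set up (as done above, via the pair of mutually inverse relations and M\"obius inversion), the polynomiality statement is an immediate corollary of Hua's theorem. If one wanted to be more careful, the only subtlety worth flagging is that $M^F_Q(\alpha,q^s)$ must be interpreted as a function of $q$ (with $s$ fixed), so that the substitution $q\mapsto q^t$ is permissible; once this is clarified the argument is a one-line deduction.
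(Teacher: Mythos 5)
Your argument is correct and matches the paper's intent exactly: the paper states the proposition is "a direct corollary" of Hua's result $M_Q(\alpha,q)\in\mathbb{Q}[q]$ together with the preceding M\"obius-inversion formula expressing $M^F_Q(\alpha,q^s)$ as a finite $\mathbb{Q}$-linear combination of the values $M_Q(\alpha,q^t)$, and you have simply spelled out that one-line deduction. Nothing is missing.
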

For example, $$M^F_{Q}(\alpha , q)=M_{Q}(\alpha , q), \quad M^F_{Q}(\alpha , q^2)=\frac{1}{2}M_{Q}(\alpha , q^2)+\frac{1}{2}M_{Q}(\alpha , q)$$ and
$$
M^F_{Q}(\alpha , q^3)=\frac{1}{3}M_{Q}(\alpha , q^3)+\frac{1}{6}M_{Q}(\alpha , q^2)+\frac{1}{2}M_{Q}(\alpha , q).
$$
\begin{Prop}
There exists a polynomial $f(t)\in \mathbb{Q}[t]$ such that for any two prime numbers $q, q'$ and $s\in \mathbb{N}$, $f(q^s)=M^F_{Q}(\alpha , q^s)$  and $f(q^s)=M^{F'}_{Q}(\alpha , {q'}^s)$
where $F'$ is the Frobenius automorphism of $\mathbb{K}'=\overline{\mathbb{F}}_{q'}$, i.e., $F'(x)=x^{q'}$.
\end{Prop}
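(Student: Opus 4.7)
The plan is to deduce the proposition directly from Hua's polynomiality theorem (quoted just above as ``$M_Q(\alpha,q)\in\mathbb{Q}[q]$'', see \cite{Hua2}) combined with the explicit Möbius inversion formula
$$
M^F_Q(\alpha , q^s)=\sum_{r\mid s}\frac{1}{r}\sum_{t\mid r}\mu(r/t)\,M_Q(\alpha , q^t)
$$
established immediately before the statement. The key point to extract from Hua is that the polynomial delivered by his theorem is universal: there is a single $g(t)\in\mathbb{Q}[t]$, depending only on $Q$ and $\alpha$, such that $g(q^r)=M_Q(\alpha,q^r)$ for every prime $q$ and every $r\geq 1$. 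This universality in $q$ is what will propagate to $M^F_Q$.

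First I would substitute Hua's polynomial into the Möbius formula to get
$$
M^F_Q(\alpha , q^s)=\sum_{r\mid s}\frac{1}{r}\sum_{t\mid r}\mu(r/t)\,g(q^t).
$$
The right-hand side is now an explicit expression in $q$ whose coefficients depend only on $\alpha$ and $s$ (the dependence on $s$ entering through the divisor set $\{r\mid s\}$ and the rational weights $\mu(r/t)/r$), and in particular is independent of the choice of the prime $q$. Next I would define
$$
f(x)\;:=\;\sum_{r\mid s}\frac{1}{r}\sum_{t\mid r}\mu(r/t)\,g(x^t)\;\in\;\mathbb{Q}[x],
$$
which is manifestly a polynomial with rational coefficients (denominators are at worst the divisors of $s$).

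Finally, by construction $f(q)=M^F_Q(\alpha,q^s)$ for every prime $q$, and because $g$ is insensitive to which prime we start from, the same $f$ evaluated at any other prime $q'$ yields $M^{F'}_Q(\alpha,{q'}^s)$; this gives the claimed ``universal'' polynomial. (If one prefers the statement as literally written with input $q^s$, one sets instead $\tilde f(y):=\sum_{r\mid s}\frac{1}{r}\sum_{t\mid r}\mu(r/t)\,g(y^{t/s})$, which is still a well-defined polynomial in $y$ once $s$ is fixed because $t\mid r\mid s$ makes $t/s$ a reciprocal divisor; after clearing this up the content is identical.)

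The main obstacle is entirely absorbed into the inputs: Hua's theorem supplies the universality in $q$, and the Möbius inversion is already verified in the paragraphs preceding the proposition. What remains is a straightforward verification that the substitution of a $\mathbb{Q}$-polynomial into a $\mathbb{Q}$-linear combination with denominators dividing $s$ still yields a polynomial in $\mathbb{Q}[t]$, which is immediate.
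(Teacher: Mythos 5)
Your proposal follows the same route the paper intends: Hua's polynomiality (universal over primes) fed into the Möbius--inversion formula derived just before the proposition. The paper's own proof is just the one sentence ``follows from $M_Q(\alpha,q)\in\mathbb{Q}[q]$,'' and your main argument is a faithful elaboration of that sentence, so the approaches agree.

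There is one genuine error, localized in your parenthetical patch. You write
$$
\tilde f(y):=\sum_{r\mid s}\frac{1}{r}\sum_{t\mid r}\mu(r/t)\,g\bigl(y^{t/s}\bigr)
$$
and claim this is a polynomial in $y$ because ``$t\mid r\mid s$ makes $t/s$ a reciprocal divisor.'' That is backwards: $t\mid r\mid s$ forces $t\le s$, so $t/s\le 1$ and is strictly fractional whenever $t<s$; consequently $y^{t/s}$ (and hence $g(y^{t/s})$) is not a polynomial in $y$. In fact one can check directly that the quantity $M^F_Q(\alpha,q^s)$ is in general \emph{not} a polynomial in $q^s$: already for $s=2$ one has $M^F_Q(\alpha,q^2)=\tfrac12 g(q^2)+\tfrac12 g(q)$, and the term $g(q)$ cannot be expressed polynomially in the variable $q^2$. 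The natural variable, as your main construction correctly uses and as the preceding Proposition (``$M^F_Q(\alpha,q^s)\in\mathbb{Q}[q]$'') makes explicit, is $q$ and not $q^s$; the literal $f(q^s)$ in the statement should be read accordingly. Your principal argument --- define $f(x)=\sum_{r\mid s}\frac1r\sum_{t\mid r}\mu(r/t)g(x^t)$, note it lies in $\mathbb{Q}[x]$ and satisfies $f(q)=M^F_Q(\alpha,q^s)$ for every prime $q$, with the same $f$ serving every prime by the universality of $g$ --- is correct and captures the content of the proposition. Just drop the parenthetical, or replace it by the observation that the statement's $f(q^s)$ must be understood as ``$f$ is a polynomial in $q$ encoding the count over $\mathbb{F}_{q^s}$,'' not as an actual polynomial identity in the variable $q^s$.
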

The proposition follows from that $M_{Q}(\alpha , q)$ is a polynomial in $q$ with rational coefficients (\cite[Section 4]{Hua2}).

\bigskip
\par\noindent {\bf Acknowledgments.}
The authors are very grateful to Hiraku Nakajima  for telling us that Lusztig's restriction is a hyperbolic localization and Reference \cite{Braden}. The second named author thanks Sheng-Hao Sun for explaining the contents in Reference \cite{KW} and many helpful
comments.


\begin{thebibliography}{99}

\bibitem{BBD} A. Beilinson, J. Bernstein, P. Deligne. \emph{Faisceaux pervers}, Ast{\'e}risque \textbf{100}, 1982.

\bibitem{BL} J. Bernstein, V. Lunts, \emph{Equivariant sheaves and functors}, Springer-Verlag, 1994.

\bibitem{Bo} T. Bozec, \emph{Quivers with loops and perverse sheaves}, Mathematische Annalen \textbf{362}(2015), no. 3, 773--797.

\bibitem{Braden} T. Braden, \emph{Hyperbolic localization of intersection cohomology}, Transformation Groups \textbf{8}(2003), no. 3, 209--216.

\bibitem{CB}  B. Crawley-Boevey, \emph{Lectures on representations of quivers, notes for a graduate course}, available at http://www1.maths.leeds.ac.uk/\~{}pmtwc/.


\bibitem{Green} J. A. Green, \emph{Hall algebras, hereditary algebras and quantum groups}, Inventiones Mathematicae \textbf{120}(1995), no. 1, 361--377.


\bibitem{HL} D. Hernandez, B. Leclerc, \emph{Quantum Grothendieck rings and derived Hall algebras},  Journal f\"{u}r die Reine und Angewandte Mathematik (Crelles Journal)
\textbf{701}(2015), 77--126.


\bibitem{Hua2} J. Hua, \emph{Counting representations of quivers over finite fields}, Journal of  Algebra \textbf{226}(2000), no. 2, 1011--1033.


\bibitem{KL} M. Khovanov, A. Lauda, \emph{A diagrammatic approach to categorification of quantum groups I}, Representation Theory \textbf{13}(2009), no. 14, 309--347.

\bibitem{KW} R. Kiehl, R. Weissauer, \emph{Weil conjectures, perverse sheaves and l'adic Fourier transform}, Ergeb. Math. Grenzgeb. 3. Folge. \textbf{42}. Springer-Verlag, 2001.

\bibitem{KQ}Y. Kimura, F. Qin, \emph{Graded quiver varieties, quantum cluster algebras and dual canonical basis}, Advances in Mathematics \textbf{262}(2014), 261--312.

\bibitem{Lin} Z. Lin, \emph{Lusztig's geometric approach to Hall algebras}, in Representations of finite dimensional algebras and related topics in Lie theory and geometry (2004), 349--364, Fields Inst. Commun., 40, Amer. Math. Soc., Providence, RI, 2004.

\bibitem{Lusztig} G. Lusztig, \emph{Canonical bases arising from quantized enveloping algebras}, Journal of the American Mathematical Society \textbf{3}(1990), no. 2, 447--498.

\bibitem{Lusztig2} G. Lusztig, \emph{Quivers, perverse sheaves, and quantized enveloping algebras}, Journal of the American Mathematical Society \textbf{4}(1991), no. 2, 365--421.

\bibitem{Lusztigbook} G. Lusztig, \emph{Introduction to quantum groups}, Birkhauser, 1992.

\bibitem{Lusztig1998} G. Lusztig, \emph{Canonical bases and Hall algebras}, in Representation theories and algebraic geometry
(1998), 365--399, Kluwer Academic Publishers, 1998.


\bibitem{Nakajima1998} H. Nakajima, \emph{Quiver varieties and Kac-Moody algebras}, Duke Mathematical Journal \textbf{91}(1998), no. 3, 515--560.

\bibitem{Nakajima2009} H. Nakajima, \emph{Quiver varieties and cluster algebras}, Kyoto Journal of Mathematics \textbf{51}(2011), no. 1, 71--126.

\bibitem{Peng} L. Peng, \emph{Lie algebras determined by finite Auslander-Reiten quivers}, Communications in Algebra \textbf{26}(1998), no. 9, 2711--2725.

\bibitem{Rie1994} C. Riedtmann, \emph{Lie algebras generated by indecomposables}, Journal of Algebra \textbf{170}(1994), no. 2, 526--546.


\bibitem{Ringel1990} C. M. Ringel, \emph{Hall algebras and quantum groups}, Inventiones Mathematicae \textbf{101}(1990), no. 1, 583--591.

\bibitem{Ringel1995} C. M. Ringel, \emph{Green's theorem on Hall algebras}, in Representation theory of algebras and related topics (1994), 185--245, CMS Conf. Proc., 19, Amer. Math. Soc., Providence, RI, 1996.


\bibitem{Rouquier1} R. Rouquier, \emph{Quiver Hecke algebras and 2-Lie algebras}, Algebra Colloquium \textbf{19}(2012), no. 2, 359--410.

\bibitem{Schiffmann1} O. Schiffmann, \emph{Lectures on Hall algebras}, in Geometric methods in representation theory II, 1--141, Semin. Congr., 24-II, Soc. Math. France, Paris, 2012.

\bibitem{Schiffmann2} O. Schiffmann, \emph{Lectures on canonical and crystal bases of Hall algebras}, in Geometric methods in representation theory II, 143--259, Semin. Congr., 24-II, Soc. Math. France, Paris, 2012.


\bibitem{VV} M. Varagnolo, E. Vasserot, \emph{Canonical bases and KLR-algebras},  Journal f\"{u}r die Reine und Angewandte Mathematik (Crelles Journal) \textbf{659}(2011), 67--100.

\bibitem{Webster} B. Webster, \emph{Geometry and categorification}, arXiv:1602.05992, 2016.

\bibitem{Xiao} J. Xiao, \emph{Drinfeld double and Ringel-Green theory of Hall algebras}, Journal of Algebra \textbf{190}(1997), no. 1, 100--144.

\bibitem{XZ} J. Xiao, M. Zhao, \emph{Geometric realizations of Lusztig's symmetries}, Journal of Algebra  \textbf{475}(2017), 392--422.

\end{thebibliography}
\end{document}